\documentclass[reqno]{amsart}

\usepackage[OT1]{fontenc}     

\usepackage{amssymb}          
\usepackage{mathrsfs}         
\usepackage{bbm}              
\usepackage{braket}
\usepackage{stmaryrd}
\usepackage{upref}

\usepackage{graphicx}
\usepackage{graphbox}         
\usepackage{tikz}
\usetikzlibrary{cd}           
\usetikzlibrary{knots}        
\usetikzlibrary{arrows.meta}
\usepackage{tikz-3dplot}      
\usepackage{pdfpages}

\usepackage[shortlabels]{enumitem}
\usepackage{setspace}
\usepackage{relsize}
\usepackage{comment}

\usepackage[margin=1in]{geometry}

\usepackage{natbib}
\usepackage{hyperref}
\usepackage[capitalise,noabbrev,nameinlink]{cleveref}

\definecolor{cubeblue}{RGB}{60,110,220}          
\definecolor{cubeorange}{RGB}{220,170,60}        
\definecolor{skyblue}{RGB}{100,230,250}          
\definecolor{brightpink}{RGB}{252,111,248}       
\definecolor{stalkgreen}{RGB}{110,190,140}       
\definecolor{shadowred}{RGB}{160,45,45}
\definecolor{shadowredbright}{RGB}{200,65,65}
\definecolor{buildingred}{RGB}{204,50,50}        

\newcommand{\chamberScale}{0.69}   
\newcommand{\resolvedScale}{0.72}  
\newcommand{\localScale}{0.66}      
\newcommand{\preimageScale}{0.525} 
\newcommand{\stalkScale}{0.71}     
\newcommand{\rayScale}{0.78}       
\newcommand{\buildingScale}{0.525} 

\tikzset{
	axono/.style={
		x={(2.6cm,0cm)},
		y={(1.05cm,0.65cm)},
		z={(0cm,2.6cm)},
		line join=round,
	},
}

\tikzset{
	cubepic/.style={axono, scale=\chamberScale},
	cubepic2/.style={axono, scale=\resolvedScale}, 
	sheet/.style   = {cubeblue, fill opacity=1},    
	chamber/.style = {cubeblue, fill opacity=0.27}, 
	sheetorange/.style   = {cubeorange, fill opacity=1},
	chamberorange/.style = {cubeorange, fill opacity=0.27},
	edge/.style     = {thick},
	hidden/.style   = {thick, dashed}, 
	occluded/.style = {thick, dotted}, 
	glued/.style    = {line width=1.5pt},
}

\tikzset{
	localpic/.style={axono, scale=\localScale, line cap=round},
	face/.style        = {skyblue, fill opacity=1},
	loosedotted/.style = {thick, loosely dotted},
}

\tikzset{
	preimagepic/.style={axono, scale=\preimageScale, line cap=round},
	sheetpink/.style   = {brightpink, fill opacity=1},
	chamberpink/.style = {brightpink, fill opacity=0.27},
	normalvec/.style   = {-{Triangle[length=3mm,width=2.5mm]},
		line width=1.6pt, green!60!black}, 
}

\tikzset{
	stalkproj/.style     = {axono, line cap=round},                      
	stalkprojskew/.style = {axono, line cap=round, z={(-0.45cm,2.6cm)}}, 
	stalkedge/.style     = {stalkgreen, semithick},
	stalkface/.style     = {stalkgreen, fill opacity=0.27},
	hookstart/.style     = {{Hooks[left]}-},                  
	bigtip/.style        = {-{>[length=5mm,width=3.5mm]}},    
	bigtip3/.style       = {-{>[length=4mm,width=3mm]}},      
	segvis/.style        = {line width=1.2pt, black},         
	segocc/.style        = {line width=1.2pt, black!45},      
	segvis3/.style       = {line width=1.1pt, black},         
	segocc3/.style       = {line width=1.1pt, black!45},      
	stalknormal/.style   = {-{Stealth[length=1.8mm]}, line width=1.2pt}, 
}

\tikzset{
	wavyplane/.style = {fill=blue!25, fill opacity=0.27, draw=blue!55!black, thin}, 
	outward/.style   = {green!45!black},   
	inward/.style    = {green!45!black},   
	rayvis/.style    = {line width=1.834pt, black},    
	rayocc/.style    = {line width=1.834pt, black!65}, 
	raytip/.style    = {-{Stealth[length=10.700mm,width=7.643mm]}},
	normaltip/.style = {-{Stealth[length=4.891mm,width=3.516mm]}, line width=3.057pt}, 
}

\tikzset{
	buildingpic/.style  = {axono, scale=\buildingScale}, 
	buildingpic1/.style = {axono, scale=\buildingScale}, 
	chamberred/.style   = {buildingred, fill opacity=0.27}, 
	sheetred/.style     = {buildingred, fill opacity=1},    
	darkredfill/.style  = {buildingred, fill opacity=1},    
}

\newcommand{\cubefaces}[2]{%
	\fill[#2] (A#1) -- (B#1) -- (E#1) -- (C#1) -- cycle;
	\fill[#2] (A#1) -- (C#1) -- (G#1) -- (D#1) -- cycle;
	\fill[#2] (C#1) -- (E#1) -- (H#1) -- (G#1) -- cycle;
	\fill[#2] (A#1) -- (B#1) -- (F#1) -- (D#1) -- cycle;
	\fill[#2] (B#1) -- (E#1) -- (H#1) -- (F#1) -- cycle;
	\fill[#2] (D#1) -- (F#1) -- (H#1) -- (G#1) -- cycle;
}

\newcommand{\cubecoords}[2]{%
	\begin{scope}[shift={#2}]
		\coordinate (A#1) at (0,0,0);
		\coordinate (B#1) at (1,0,0);
		\coordinate (C#1) at (0,1,0);
		\coordinate (D#1) at (0,0,1);
		\coordinate (E#1) at (1,1,0);
		\coordinate (F#1) at (1,0,1);
		\coordinate (G#1) at (0,1,1);
		\coordinate (H#1) at (1,1,1);
	\end{scope}%
}

\newcommand{\markpoint}[3]{%
	\fill[black] (#1) circle (3.6pt);
	\node [label={[label distance=#3]#2:p}] at (#1) {};%
}

\newcommand{\markpointas}[4]{%
	\fill[black] (#1) circle (3.6pt);
	\node [label={[label distance=#3]#2:#4}] at (#1) {};%
}

\newcommand{\markpointc}[4]{%
	\fill[black] (#1) circle (3.6pt);
	\node[anchor=#4] at ([shift={(canvas polar cs:angle=#2,radius=#3)}]#1) {p};%
}

\newcommand{\boldstamp}[4][]{%
	\foreach \dx/\dy in {0pt/0pt, 0.3pt/0pt, 0pt/0.3pt, 0.3pt/0.3pt} {
		\node[font=\Large\bfseries, #4, #1, xshift=\dx, yshift=\dy] at (#2) {#3};
	}%
}

\theoremstyle{plain}
\newtheorem{thm}{Theorem}[section]
\newtheorem{lem}[thm]{Lemma}

\newtheorem{prop}[thm]{Proposition}

\newtheorem{cor}[thm]{Corollary}

\theoremstyle{definition}
\newtheorem{defn}[thm]{Definition}

\theoremstyle{remark}

\theoremstyle{plain}

\newtheorem*{maintheorem}{Main Theorem}
\newtheorem*{thm*}{Theorem}

\theoremstyle{definition}

\theoremstyle{remark}

\theoremstyle{plain}

\crefname{thm}{Theorem}{Theorems}
\crefname{lem}{Lemma}{Lemmas}
\crefname{conj}{Conjecture}{Conjectures}
\crefname{prop}{Proposition}{Propositions}
\crefname{spec}{Speculation}{Speculations}
\crefname{obs}{Observation}{Observations}
\crefname{cor}{Corollary}{Corollaries}
\crefname{defn}{Definition}{Definitions}
\crefname{excs}{Exercise}{Exercises}
\crefname{rmk}{Remark}{Remarks}
\crefname{maintheorem}{Main Theorem}{Main Theorems}
\crefname{theorem}{Theorem}{Theorems}
\crefname{lemma}{Lemma}{Lemmas}
\crefname{conjecture}{Conjecture}{Conjectures}
\crefname{proposition}{Proposition}{Propositions}
\crefname{speculation}{Speculation}{Speculations}
\crefname{observation}{Observation}{Observations}
\crefname{definition}{Definition}{Definitions}
\crefname{exercise}{Exercise}{Exercises}
\crefname{remark}{Remark}{Remarks}

\makeatletter
\newcommand{\crefsharedcounter}[2]{
	\AddToHook{env/#1/begin}{\@namedef{cref@#2@alias}{#1}}}
\makeatother
\crefsharedcounter{lem}{thm}
\crefsharedcounter{conj}{thm}
\crefsharedcounter{prop}{thm}
\crefsharedcounter{spec}{thm}
\crefsharedcounter{obs}{thm}
\crefsharedcounter{cor}{thm}
\crefsharedcounter{defn}{thm}
\crefsharedcounter{excs}{thm}
\crefsharedcounter{rmk}{thm}
\crefsharedcounter{lemma}{theorem}
\crefsharedcounter{conjecture}{theorem}
\crefsharedcounter{proposition}{theorem}
\crefsharedcounter{speculation}{theorem}
\crefsharedcounter{observation}{theorem}
\crefsharedcounter{definition}{theorem}
\crefsharedcounter{exercise}{theorem}
\crefsharedcounter{remark}{theorem}

\newcommand{\sph}{{\mathbb{S}}}
\newcommand{\rl}{{\mathbb{R}}}

\DeclareMathOperator{\Imm}{Imm}

\newcommand{\Pprod}[1]{\ensuremath{\mathop{\vcenter{\hbox{\scalebox{2.5}{$\prod$}}}}\limits_{#1}}}
\newcommand{\DDelta}{\ensuremath{\vcenter{\hbox{\scalebox{2.75}{$\Delta$}}}}}
\newcommand{\dDelta}{\ensuremath{\vcenter{\hbox{\scalebox{1.5}{$\Delta$}}}}}
\newcommand{\ccdot}{\ensuremath{\vcenter{\hbox{\scalebox{2}{$\cdot$}}}}}

\newcommand{\abs}[1]{\left\vert#1\right\vert}

\newcommand\restr[2]{{%
		\left.\kern-\nulldelimiterspace
		#1
		\vphantom{\big\vert}
		\right\vert_{#2}
}}

\newcommand{\ol}{\overline}

\newcommand{\wh}{\widehat}

\title{Which Immersions into Euclidean Space are Boundary Immersions?}
\author{Austin Konkel}
\address{Department of Mathematics, Stony Brook University, Stony Brook, NY 11794, USA}
\email{austin.konkel@stonybrook.edu}
\date{October 2026}

\begin{document}

\begin{abstract}

We study the problem of whether a given codimension one immersion \(f\) of a closed and connected manifold into Euclidean space can be extended to an equidimensional immersion \(F\) of some compact manifold \(M\) whose restriction to the boundary parameterizes the image \(S\) of \(f\). An extension \(F\) exists if and only if (i) simple conditions on orientations and wrapping numbers are satisfied and (ii) a distinguished finite subset of the first cohomology of \(S\) with coefficients in a certain sheaf of finite groups contains the preferred point. 

\end{abstract}

\maketitle

 \section{Introduction}

Suppose you have a connected manifold that is almost parallelizable. Remove a ball. Then you can immerse what remains into Euclidean space of the same dimension. You wonder what this immersed boundary sphere looks like because its complement inside space can be filled in by the rest of the manifold. We analyze this situation by building a sheaf of groups whose first cohomology contains information about this immersed sphere. Alongside some wrapping number conditions that are easy to define, this first cohomology set characterizes extensions up to a natural equivalence in all dimensions. When the manifold less a ball \(M\) is contractible, it is a celebrated open question in dimension 4 whether \(M\) is combinatorially a ball, and a fairly recent theorem in dimension 3 that \(M\) is combinatorially a ball. In all other dimensions, it is known that \(M\) is combinatorially a ball. This construction of the nature of the immersion might give information about these missing dimensions.

\begin{figure}[h]
	\centering
	\includegraphics[scale=0.22]{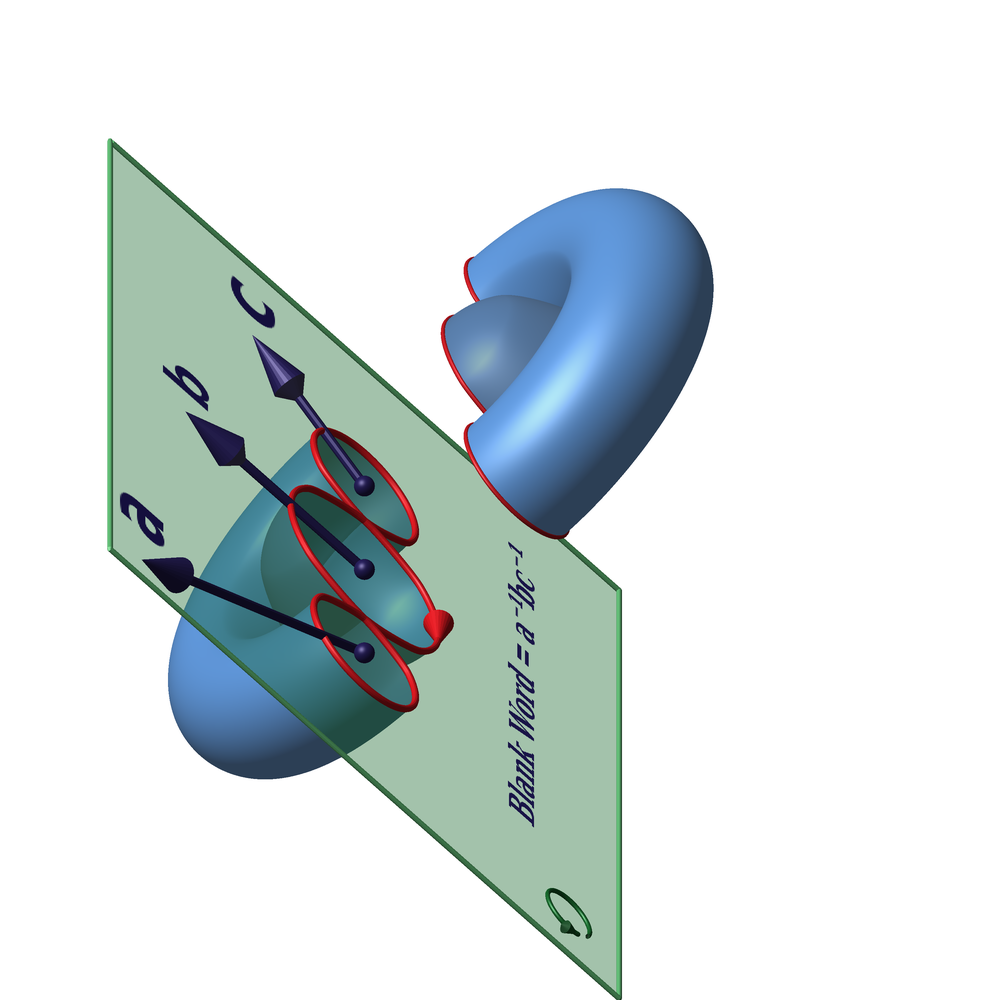}
	\caption{A portrayal of a dimension-reduction argument given by \cref{prop:transversal slice} wherein one transversely slices the image \(S\). Here an algorithm of Sandy \cite{blank1967} is applied to obstruct an extension by obstructing one inside a transversely intersecting plane.}
	\label{fig:transversal slice}
\end{figure}



\section{History and Motivation}

Equidimensional immersions are powerful: one can transport any local structure on the target back to the domain to tautologically obtain a local isomorphism in the category of structure being considered.   Once Tony \cite{Phillips1967} proved the submersion theorem for open manifolds,\footnote{David \cite{spring2005} refers to the years from 1959 to 1973 as the early `golden' period for immersion theory.} the existence of many equidimensional immersions was guaranteed.  For example, Eduard \cite{Stiefel1935} had already shown that all orientable 3-manifolds are parallelizable,\footnote{Refer to \cite{BaisZuddas2023} for a more modern account of \citeauthor{Stiefel1935}'s parallelizability.} so the image of some generic immersion of the 2-sphere into 3-space that bounds an immersed copy of your favorite such 3-manifold could then always be built and placed on your coffee table at home.\footnote{One month before his death, J. H. C. \cite{Whitehead1961ImmersionOpen3Manifold} was writing about immersing open 3-manifolds into Euclidean 3-space.} Down a dimension, constructing immersions of Riemann surfaces with boundary into the plane is classical. Being \emph{almost parallelizable}\footnote{We say a smooth manifold is \emph{almost parallelizable} if that manifold is connected and parallelizable after removing a point.} is the clarifying concept which drives applications of \citeauthor{Phillips1967}' submersion theorem. Further examples of almost parallelizable manifolds include all connected spin 4-manifolds and all smooth \(d\)-manifolds homotopy equivalent to the \(d\)-sphere. Another interesting classical corollary of this submersion theorem is that every open and oriented 4-manifold admits a complex structure since each can be immersed into the complex projective plane less a point, see Dennis \cite{Sullivan2023}.

One knows that a closed, oriented, and smooth manifold is an oriented boundary if and only if all its Stiefel--Whitney and Pontryagin numbers vanish. Stephen \cite{smale1959} and Morris \cite{hirsch1959} determine when one \(d\)-manifold immerses into another by working handle by handle, up to the top handles. Their argument reduces the question from the title in codimension two or greater to homotopy theory. Extending in codimension one is inherently a codimension zero problem because our equidimensional extension problem is posed relative to the boundary.  We describe a functor from the category of codimension one immersions in general position inside Euclidean space, with morphisms being smooth isotopies, to the category of sheaves of finite groups, with morphisms being isomorphisms. Any first cohomology set over the image \(S\) obtained using this functor characterizes the extensions of \(S\) with a compact domain up to a natural equivalence, revealing in the process new information and structure concerning this pervasive top-cell problem.

 Andr\'e \cite{Gramain1970a,Gramain1970b} studies specifically extensions of codimension one immersions of spheres into spheres and into Euclidean spaces. 
\citeauthor{Gramain1970a} remarks that the immersion theory of \citeauthor{smale1959} and  \citeauthor{hirsch1959} produces a bijection between the connected components \(\pi_0\left(\Imm{\left(\sph^d,\rl^{d+1}\right)}\right)\) of our space of immersions and a homotopy group \(\pi_d\left(SO\left(d+1\right)\right)\) of a special orthogonal group.  \citeauthor{Gramain1970b} uses the well-known \(J\)-homomorphism \(J : \pi_k\left(SO\left(d\right)\right) \to \pi_{k+d}\left(\sph^d\right)\) to prove that if an immersion of a sphere extends, then after applying the remarked bijection, the equivalence class of that immersion in \(\pi_0\left(\Imm{\left(\sph^d,\rl^{d+1}\right)}\right)\) is in the kernel of the \(J\)-homomorphism. The image of the \(J\)-homomorphism was famously studied by Frank \cite{AdamsJGroups}.

 All points on the image of \emph{a generic codimension one immersion} are modeled locally on finitely many hyperplanes meeting in general position inside Euclidean space. Christian \cite{Pappas1996} uses Morse theory to produce all extensions level by level according to a Morse function on the range. \citeauthor{Pappas1996} identifies geometric obstructions to continuing extensions past the critical points, and he identifies which critical points have choices. \citeauthor{Pappas1996} asked whether the extending domains of the same codimension one immersion need be homeomorphic. Our sheaf structure below gives an algebraic obstruction theory approach to this question.
 
  The level sets of some immersed spheres are portrayed in \cref{fig:sphere,fig:dimple,fig:tube,fig:s1xs2,fig:giller,fig:ferry,fig:milnorpush}.  \cref{fig:dimple,fig:milnorpush}  portray simple and controlled homotopies through immersions that destroy the property of admitting an extension. We call an element in this special class of simple and controlled homotopies \emph{a finger move}. For example, the immersed sphere in \cref{fig:dimple} does not admit an extension, but is exactly one finger move away from the embedding in \cref{fig:sphere}. By contrast, the immersed sphere in \cref{fig:tube} admits an extension and is four finger moves away from the embedding. \cref{fig:milnorpush} portrays an immersed sphere that does not admit an extension, yet is an easy finger move away from an immersion that extends in exactly two inequivalent ways.

The earliest references we found that study immersions combinatorially are due to Charles \cite{Titus1960, Titus1961}, who studies immersed disks into the plane. Studying immersions of the disk is motivated by complex analysis.  Around that time, Robert Gunning was also looking into finding the possible holomorphic maps of a disk into the complex plane taking the boundary of the disk to a given immersed circle. His study arose in examining flat structures on Riemann surfaces \citep{GunningNarasimhan1967}. This approach towards a characterization works because one may transport any complex structure on the plane back to the disk under an equidimensional immersion to tautologically obtain a holomorphic map. The thesis of Sandy \cite{blank1967} makes progress towards Gunning's question in the generic case without branching. \cref{fig:transversal slice} portrays  \citeauthor{blank1967}'s combinatorial if and only if algorithm, which decides exactly which generic immersions of the circle into the plane extend to an immersion of the disk.  Valentin \cite{Poenaru1968} provides a Bourbaki account of \citeauthor{blank1967}'s work. Morris \cite{Marx1965, Marx1968, Marx1974} makes progress in the generic case with branching.  \cite{MarxVerhey1970} study extending generic immersions of the circle into the plane with polynomial regularity and obtain a sufficient condition for existence. George \cite{Francis1969, Francis1970, Francis1973, Francis1978} makes several contributions, and Keith \cite{Bailey1975} studies a higher-genus situation. Margaret \cite{McIntyre1997} studies extensions of generic immersions of the circle into any closed and orientable surface, and provides bounds on analogues of the wrapping numbers.  Dennis \cite{Frisch2010Sphere} studies extending generic immersions of the circle into the 2-sphere by disks. A unified treatment of the ideas by all authors in this line of work would be valuable. Since compact, orientable, and connected 2-manifolds with exactly one boundary component are determined up to homeomorphism by their Euler characteristic, the topology of any extension is known a priori via the theorem of Gauss--Bonnet. How does the topology of a manifold constrain its immersed boundaries?
 
 Here is another question. Immerse a closed, connected, and almost parallelizable \(d\)-manifold less a ball into flat Euclidean \(d\)-space. Transport the metric back to the domain less that ball. By elementary obstruction theory, one can extend the Riemannian metric over the removed ball to concentrate all nonzero curvature into that small open ball.  How does the  curvature of a Riemannian manifold constrain its immersed boundaries?

  Recall that \citeauthor{Phillips1967} showed us that generic immersions of the 2-sphere into Euclidean 3-space which extend to an immersion of some compact 3-manifold whose immersed 2-sphere boundary parameterizes the given immersed 2-sphere form a rich space. \cite{MasudaNagase2001}, for the 2-sphere, and later Bojun \cite{zhao2019}, for surfaces, describe finite algorithms for finding extensions of a generic immersion into 3-space. The finite approach presented in what follows has the advantage of being formulated within the well-known framework of sheaf theory and cohomology. Furthermore, our methods apply in any dimension.
 
  The explicit combinatorics here describing how finitely many compact covering spaces are adhered together along their boundaries  yield the gluing data of a triangulation of any extending 3-manifold. Given a triangulated 3-manifold, there is the problem of recognizing whether that manifold is the 3-ball. The now-proven Poincar\'e conjecture in dimension 3 implies that being compact and contractible is sufficient. Independently of this big theorem, the 3-ball recognition problem has been solved. There exist finite-time algorithms going back to \cite{Rubinstein1995}, \cite{Thompson1994}, and others,\footnote{Refer to the article by Joel \cite{Hass1998Algorithms} for a survey on algorithms for solving problems in 3-dimensional topology.} that decide whether a triangulated 3-manifold  is the 3-ball. Combined with these algorithms, our results answer Problem 3.19 of Robion \cite{Kirby1997}, which asks which immersed 2-spheres in 3-space bound immersed 3-balls. 

 Rob \cite{kusner2005conformal} similarly asked whether there is a computable way to detect whether an immersed surface in 3-space is \emph{Alexandrov immersed}, meaning that the immersion extends to an immersion of a 3-manifold it bounds. The obstruction theory developed here answers \citeauthor{kusner2005conformal}'s question in the compact case, and the noncompact case seems within reach. \citeauthor{kusner2005conformal} ruled out any extension of the noncompact Enneper surface by an argument that is essentially \cref{prop:transversal slice}. It would be valuable to understand the computational complexity of these finite obstruction-theoretic calculations. In low dimensions, this problem arises in computer graphics, where 3D models collide in practice and intersect generically with probability one. This has been studied by \citet*{LiBarbic2018} and \citet*{Sassen2024}. 

\citet*{Buchbinder2026MinimalHomotopies3D} study extensions of generic immersions of 2-spheres into 3-space to Lipschitz 3-balls and show that the minimal volume is at least a sum of the volumes of the components of the complement of the immersed sphere, each weighted by its wrapping number. The bound is attained exactly when there is an immersed ball. They also give a linear-time algorithm for computing the wrapping numbers. The Lipschitz regularity of 4-manifolds raises important open questions.

From \citet*{carraracartersaito2001}, one knows that any smooth embedding of the 2-sphere into 4-space admits some smooth projection down into 3-space that is a generic immersion. Given any such 2-sphere, an algorithm from \cite{CarterSaito1997Seifert} produces an embedded 3-manifold that it bounds. Flexibility established in what follows might be used to study other extension problems, such as ones involving these higher-dimensional Seifert surfaces. To do this, one simply does for the components of the complement of the image with negative wrapping number exactly what is done for the components with positive wrapping number, just reversing arrows and treating them as positive. Other mysterious examples to try and apply this structure to are the knotted 3-balls inside the 4-sphere of \cite{BudneyGabai2021}. Take two of their examples bounded by the same great 2-sphere of the 4-sphere, then glue them together along that sphere. This produces a map of the 3-sphere into the 4-sphere that is an immersion away from a fold along a great equatorial 2-sphere. What do these once folded immersed 3-spheres and the 4-balls they bound look like?




As portrayed in \cref{fig:local pictures}, the image of any generic immersion of the 2-sphere into 3-space is locally homeomorphic to some 2-dimensional real algebraic variety, so the work of \cite{akbulutking1992} proves that the image is in fact homeomorphic to a real algebraic variety. Real algebraic varieties admit a stratified structure \`a la \cite{Whitney1965Tangents}. Hence, so too does any generic immersed image via transport of structure. One may read about the related concept of an equisingular stratification in the work of Ren{\'e} \cite{Thom1969} and John \cite{Mather2012}. We can see this stratification directly from the local models: the image is equisingularly stratified by its single, double, and triple points. An important theorem in algebraic geometry is one of resolution of singularities in characteristic zero proven by Heisuke \cite{Hironaka1964}, but this problem remains open in characteristic \(p\). Our singularities here have very simple local pictures, and resolution techniques have been used to great effect in this theory before by Thomas \cite{banchoff1974a} and Tobias \cite{Ekholm2001}.

\emph{The shape space} of images of generic codimension one immersions of the \(d\)-sphere into \(\left(d+1\right)\)-space can be defined in the following way. Let \(\Imm{\left(\sph^d,\rl^{d+1}\right)}\) be the Fr\'echet manifold of immersions of the \(d\)-sphere into \(\left(d+1\right)\)-space and \(\Imm_{\text{gen}}\left(\sph^d,\rl^{d+1}\right)\) its open and dense submanifold of generic immersions. Let \(\text{Diff}\left(\sph^d\right)\) be the diffeomorphism group of the \(d\)-sphere, which acts on \(\Imm_{\text{gen}}\left(\sph^d,\rl^{d+1}\right)\) from the right. The image of any generic immersion of the \(d\)-sphere into \(\left(d+1\right)\)-space is then some point in the quotient space  \(\Imm_{\text{gen}}\left(\sph^d,\rl^{d+1}\right)/\text{Diff}\left(\sph^d\right)\).  A corollary of \citet*{CerveraMascaroMichor1991} is that the shape space of generic immersions of the \(d\)-sphere into \(\left(d+1\right)\)-space has the structure of a Fr\'echet manifold that is the base space of a smooth principal \(\text{Diff}\left(\sph^d\right)\)-bundle. The following discussion concerns generic immersions of the 2-sphere into 3-space, but the coming definitions, arguments, and patterns generalize readily.


\section{Definitions and Discussion}

 \emph{An immersion} is a smooth map between smooth manifolds whose derivative is injective everywhere, so locally an embedding. A smooth map into Euclidean space is \emph{transverse} if, wherever two points have the same image, the images of their tangent spaces under the derivative together span that Euclidean space. Choose a smooth structure on the 2-sphere and on 3-space. A \emph{generic} immersion of the 2-sphere into 3-space is a transverse immersion such that each point in the image can be locally modeled\footnote{A precise definition of \emph{an atlas of local models} for a generically immersed 2-sphere in 3-space is provided in \(\S 6\).} by one of the varieties portrayed in \cref{fig:local pictures}. \emph{A generically immersed 2-sphere in 3-space} is the image of a generic immersion of the 2-sphere into 3-space, endowed with the subspace topology. Two generically immersed 2-spheres in 3-space are \emph{equivalent} if there exists an ambient smooth isotopy of 3-space that brings one subset to the other. A generically immersed 2-sphere \(S\) in 3-space is \emph{fillable} if there exists a compact 3-manifold \(M\) immersed inside 3-space such that \(S\) is the immersed 2-sphere boundary of \(M\). In other words, there exists an immersion \(F\) of \(M\) into 3-space such that the restriction of \(F\) to the boundary of \(M\) parameterizes \(S\). Any such extension \(F\) is called \emph{a filling of \(S\) by \(M\)}. Two fillings of \(S\) by \(M\) and by \(M'\) are \emph{equivalent} if there exists a diffeomorphism from \(M\) to \(M'\) that forms a commutative triangle over their common images inside 3-space. The property of being fillable is preserved under the equivalence of ambient smooth isotopy, and the domain of any filling is orientable because any orientation on 3-space may be transported back to the domain via the filling.

\begin{figure}[h!]
	{\centering
		\begin{minipage}{\textwidth}
			\centering
			
			\begin{tabular}{@{}c@{\hspace{0.3in}}c@{\hspace{0.3in}}c@{}}
				\textbf{Single Point} & \textbf{Double Point} & \textbf{Triple Point}
				\\[1.5mm]
				
				\(\vcenter{\hbox{\begin{tikzpicture}[localpic]
							
							\coordinate (ymzm) at (-0.611616,-0.522736,-1.163045);
							\coordinate (ypzm) at (0.445524,1.121011,-0.738134);
							\coordinate (ypzp) at (0.611616,0.522736,1.163045);
							\coordinate (ymzp) at (-0.445524,-1.121011,0.738134);
							
							\fill[face] (ymzm) -- (ypzm) -- (ypzp) -- (ymzp) -- cycle;
							\draw[edge] (ymzm) -- (ypzm) -- (ypzp) -- (ymzp) -- cycle;
							
							\markpointc{0,0}{30}{2mm}{south west}
				\end{tikzpicture}}}\)
				&
				\(\vcenter{\hbox{\begin{tikzpicture}[localpic]
							
							\coordinate (xmym) at (-1.296752,-0.543584,-0.151492);
							\coordinate (xmzm) at (-0.896071,0.559946,-0.939956);
							\coordinate (xm) at (-0.929532,0.366501,0.040584);
							\coordinate (xmzp) at (-0.962993,0.173057,1.021125);
							\coordinate (xmyp) at (-0.562312,1.276587,0.232661);
							\coordinate (xpym) at (0.562312,-1.276587,-0.232661);
							\coordinate (xpzm) at (0.962993,-0.173057,-1.021125);
							\coordinate (xp) at (0.929532,-0.366501,-0.040584);
							\coordinate (xpzp) at (0.896071,-0.559946,0.939956);
							\coordinate (xpyp) at (1.296752,0.543584,0.151492);
							
							\coordinate (yzaux1) at (-0.909605,0.481703,-0.543354);
							\coordinate (yzaux2) at (0.343810,0.919316,0.193099);
							
							\fill[face] (xm) -- (xmzp) -- (xpzp) -- (xp) -- cycle; 
							\fill[face] (xm) -- (xmzm) -- (xpzm) -- (xp) -- cycle; 
							\fill[face] (xm) -- (xmyp) -- (xpyp) -- (xp) -- cycle; 
							\fill[face] (xm) -- (xmym) -- (xpym) -- (xp) -- cycle; 
							
							\draw[edge] (xm) -- (xmzp);
							\draw[edge] (xpzp) -- (xmzp);
							\draw[edge] (xp) -- (xpzp);
							\draw[edge] (xp) -- (xm);
							\draw[edge] (yzaux1) -- (xmzm);
							\draw[edge] (xpzm) -- (xmzm);
							\draw[edge] (xp) -- (xpzm);
							\draw[edge] (xpyp) -- (yzaux2);
							\draw[edge] (xp) -- (xpyp);
							\draw[edge] (xm) -- (xmym);
							\draw[edge] (xpym) -- (xmym);
							\draw[edge] (xp) -- (xpym);
							
							\markpointc{0,0}{30}{2mm}{south west}
				\end{tikzpicture}}}\)
				&
				\(\vcenter{\hbox{\begin{tikzpicture}[localpic]
							
							\coordinate (XmYm) at (-1.157795,-0.807412,-0.087156);
							\coordinate (XmZm) at (-0.969673,0.259480,-0.996195);
							\coordinate (Xm) at (-0.984808,0.173648,0.000000);
							\coordinate (XmZp) at (-0.999942,0.087817,0.996195);
							\coordinate (XmYp) at (-0.811820,1.154708,0.087156);
							\coordinate (YmZm) at (-0.157853,-0.895229,-1.083350);
							\coordinate (Ym) at (-0.172987,-0.981060,-0.087156);
							\coordinate (YmZp) at (-0.188122,-1.066892,0.909039);
							\coordinate (Zm) at (0.015134,0.085832,-0.996195);
							\coordinate (O) at (0.000000,0.000000,0.000000);
							\coordinate (Zp) at (-0.015134,-0.085832,0.996195);
							\coordinate (YpZm) at (0.188122,1.066892,-0.909039);
							\coordinate (Yp) at (0.172987,0.981060,0.087156);
							\coordinate (YpZp) at (0.157853,0.895229,1.083350);
							\coordinate (XpYm) at (0.811820,-1.154708,-0.087156);
							\coordinate (XpZm) at (0.999942,-0.087817,-0.996195);
							\coordinate (Xp) at (0.984808,-0.173648,0.000000);
							\coordinate (XpZp) at (0.969673,-0.259480,0.996195);
							\coordinate (XpYp) at (1.157795,0.807412,0.087156);
							
							\coordinate (aux1) at (0.163447,0.926956,0.715111);
							\coordinate (aux2) at (0.181696,1.030449,-0.486070);
							\coordinate (aux3) at (0.005594,0.031727,-0.368241);
							\coordinate (aux4) at (-0.632643,0.111552,0.000000);
							\coordinate (aux5) at (-0.979213,0.205376,-0.368241);
							\coordinate (aux6) at (-0.617508,0.197384,-0.996195);
							\coordinate (aux7) at (0.525154,0.918964,0.087156);
							\coordinate (aux8) at (-0.888513,0.719763,0.048516);
							
							\fill[face] (O) -- (Yp) -- (YpZp) -- (Zp) -- cycle; 
							\fill[face] (O) -- (Yp) -- (YpZm) -- (Zm) -- cycle; 
							\fill[face] (O) -- (Ym) -- (YmZp) -- (Zp) -- cycle; 
							\fill[face] (O) -- (Ym) -- (YmZm) -- (Zm) -- cycle; 
							\fill[face] (O) -- (Xp) -- (XpZp) -- (Zp) -- cycle; 
							\fill[face] (O) -- (Xp) -- (XpZm) -- (Zm) -- cycle; 
							\fill[face] (O) -- (Xm) -- (XmZp) -- (Zp) -- cycle; 
							\fill[face] (O) -- (Xm) -- (XmZm) -- (Zm) -- cycle; 
							\fill[face] (O) -- (Xp) -- (XpYp) -- (Yp) -- cycle; 
							\fill[face] (O) -- (Xp) -- (XpYm) -- (Ym) -- cycle; 
							\fill[face] (O) -- (Xm) -- (XmYp) -- (Yp) -- cycle; 
							\fill[face] (O) -- (Xm) -- (XmYm) -- (Ym) -- cycle; 
							
							\draw[edge] (aux1) -- (YpZp);
							\draw[edge] (Zp) -- (YpZp);
							\draw[edge] (O) -- (Zp);
							\draw[edge] (Zm) -- (aux3);
							\draw[edge] (O) -- (Ym);
							\draw[edge] (Ym) -- (YmZp);
							\draw[edge] (Zp) -- (YmZp);
							\draw[edge] (YmZm) -- (Ym);
							\draw[edge] (Zm) -- (YmZm);
							\draw[edge] (Xp) -- (O);
							\draw[edge] (Xp) -- (XpZp);
							\draw[edge] (XpZp) -- (Zp);
							\draw[edge] (Xp) -- (XpZm);
							\draw[edge] (Zm) -- (XpZm);
							\draw[edge] (aux4) -- (Xm);
							\draw[edge] (Xm) -- (XmZp);
							\draw[edge] (XmZp) -- (Zp);
							\draw[edge] (aux5) -- (XmZm);
							\draw[edge] (aux6) -- (XmZm);
							\draw[edge] (Xp) -- (XpYp);
							\draw[edge] (XpYp) -- (aux7);
							\draw[edge] (Xp) -- (XpYm);
							\draw[edge] (XpYm) -- (Ym);
							\draw[edge] (Xm) -- (XmYm);
							\draw[edge] (XmYm) -- (Ym);
							
							\markpointc{0,0}{30}{2mm}{south west}
				\end{tikzpicture}}}\)
				\\[1.5mm]
				\(\{x=0\}\) & \(\{xy=0\}\) & \(\{xyz=0\}\)
			\end{tabular}
			
		\end{minipage}
		\par}
	\caption{All local models around points \(p\) on any generically immersed 2-sphere \(S\) in 3-space.}
	\label{fig:local pictures}
\end{figure}

 The complement of \(S\) in 3-space is a disconnected set by a classical argument of Henri \cite{poincare1895}, wherein the contrary one constructs a loop crossing \(S\) exactly once.   Define \emph{a room} to be a component \(R\) of the complement and  \emph{a chamber} to be the closure \(\ol{R}\) of a room.  Each point in any chamber can be locally modeled by one of the models portrayed in \cref{fig:local chamber pictures}. There is exactly one unbounded room since \(S\) is compact. The distinction between closed chambers and open rooms is important for a few reasons, some being that a chamber need not be homeomorphic to a 3-manifold while a room always is, or that a room may be simply connected yet its chamber need not be. Define the set of \emph{mould lines} of a chamber to be the set of points where that chamber fails to be a manifold, that is, the points whose local models in \cref{fig:local chamber pictures} are labeled II or III.  \emph{A wrapping number \(w\left(R\right)\)} of a room \(R\) equals a signed intersection number with \(S\) of a ray that leaves \(R\), meets \(S\) transversely, then runs off to infinity.\footnote{This notion of a wrapping number is well defined because each ray is considered as a relative cycle in the 2-sphere cross the real line. Adapting the argument from \cref{lem:unbounded room} proves that this definition does not depend on a choice of room point.} The two wrapping numbers of a room differ only by a sign, so the absolute value \(\abs{w\left(R\right)}\) is well defined.

\begin{figure}[h]
	{\centering
		\begin{minipage}{\textwidth}
			\centering
			
			
			\begin{tabular}{@{}c@{\hspace{0.3in}}c@{\hspace{0.3in}}c@{\hspace{0.3in}}c@{}}
				\textbf{Room Point} & \textbf{Wall Point} & \textbf{Edge Point I} & \textbf{Edge Point II}
				\\[1.5mm]
				
				\(\vcenter{\hbox{\begin{tikzpicture}[cubepic]
							\cubecoords{}{(0,0,0)}
							
							\fill[chamber] (A) -- (B) -- (E) -- (C) -- cycle; 
							\fill[chamber] (A) -- (C) -- (G) -- (D) -- cycle; 
							\fill[chamber] (C) -- (E) -- (H) -- (G) -- cycle; 
							\fill[chamber] (A) -- (B) -- (F) -- (D) -- cycle; 
							\fill[chamber] (B) -- (E) -- (H) -- (F) -- cycle; 
							\fill[chamber] (D) -- (F) -- (H) -- (G) -- cycle; 
							
							\draw[edge] (A) -- (B) -- (E) -- (H) -- (G) -- (D) -- (A); 
							\draw[edge] (B) -- (F) -- (D);                             
							\draw[edge] (F) -- (H);
							\draw[hidden] (A) -- (C);                                  
							\draw[hidden] (C) -- (E);
							\draw[hidden] (C) -- (G);
							
							\markpointc{0.5,0.5,0.5}{30}{2mm}{south west}
				\end{tikzpicture}}}\)
				&
				\(\vcenter{\hbox{\begin{tikzpicture}[cubepic]
							\cubecoords{}{(0,0,0)}
							
							\fill[chamber] (A) -- (B) -- (E) -- (C) -- cycle; 
							\fill[chamber] (A) -- (C) -- (G) -- (D) -- cycle; 
							\fill[chamber] (C) -- (E) -- (H) -- (G) -- cycle; 
							\fill[chamber] (A) -- (B) -- (F) -- (D) -- cycle; 
							\draw[hidden] (C) -- (E);                         
							\fill[sheet]   (B) -- (E) -- (H) -- (F) -- cycle; 
							\fill[chamber] (D) -- (F) -- (H) -- (G) -- cycle; 
							
							\draw[edge] (A) -- (B) -- (E) -- (H) -- (G) -- (D) -- (A);
							\draw[edge] (B) -- (F) -- (D);
							\draw[edge] (F) -- (H);
							\draw[hidden] (A) -- (C);
							\draw[hidden] (C) -- (G);
							
							\markpointc{1,0.5,0.5}{90}{2mm}{south}
				\end{tikzpicture}}}\)
				&
				\(\vcenter{\hbox{\begin{tikzpicture}[cubepic]
							\cubecoords{}{(0,0,0)}
							
							\fill[sheet]   (A) -- (B) -- (E) -- (C) -- cycle; 
							\fill[chamber] (A) -- (C) -- (G) -- (D) -- cycle; 
							\fill[chamber] (C) -- (E) -- (H) -- (G) -- cycle; 
							\fill[chamber] (A) -- (B) -- (F) -- (D) -- cycle; 
							\draw[hidden] (C) -- (E);
							\fill[sheet]   (B) -- (E) -- (H) -- (F) -- cycle; 
							\fill[chamber] (D) -- (F) -- (H) -- (G) -- cycle; 
							
							\draw[edge] (A) -- (B) -- (E) -- (H) -- (G) -- (D) -- (A);
							\draw[edge] (B) -- (F) -- (D);
							\draw[edge] (F) -- (H);
							\draw[hidden] (A) -- (C);
							\draw[hidden] (C) -- (G);
							
							\markpointc{1,0.5,0}{-30}{2mm}{north west}
				\end{tikzpicture}}}\)
				&
				\(\vcenter{\hbox{\begin{tikzpicture}[cubepic, line cap=round]
							\cubecoords{}{(0,0,0)}    
							\cubecoords{2}{(-1,0,1)}  
							
							\fill[chamber] (A) -- (B) -- (E) -- (C) -- cycle;
							\fill[sheet]   (A) -- (C) -- (G) -- (D) -- cycle;
							\fill[chamber] (C) -- (E) -- (H) -- (G) -- cycle;
							\fill[chamber] (A) -- (B) -- (F) -- (D) -- cycle;
							\fill[chamber] (B) -- (E) -- (H) -- (F) -- cycle;
							\draw[hidden] (A) -- (C);
							\draw[hidden] (C) -- (E);
							\draw[hidden] (C) -- (G);
							\fill[sheet]   (D) -- (F) -- (H) -- (G) -- cycle;
							
							\fill[sheet]   (A2) -- (B2) -- (E2) -- (C2) -- cycle;
							\fill[chamber] (A2) -- (C2) -- (G2) -- (D2) -- cycle;
							\fill[chamber] (C2) -- (E2) -- (H2) -- (G2) -- cycle;
							\fill[chamber] (A2) -- (B2) -- (F2) -- (D2) -- cycle;
							\draw[hidden] (A2) -- (C2);
							\draw[hidden] (C2) -- (E2);
							\draw[hidden] (C2) -- (G2);
							\fill[sheet]   (B2) -- (E2) -- (H2) -- (F2) -- cycle;
							\fill[chamber] (D2) -- (F2) -- (H2) -- (G2) -- cycle;
							
							\draw[edge] (A) -- (B) -- (E) -- (H) -- (G);
							\draw[edge] (D) -- (A);
							\draw[edge] (B) -- (F) -- (D);
							\draw[edge] (F) -- (H);
							
							\draw[edge] (A2) -- (B2);
							\draw[edge] (E2) -- (H2) -- (G2) -- (D2) -- (A2);
							\draw[edge] (B2) -- (F2) -- (D2);
							\draw[edge] (F2) -- (H2);
							
							\draw[glued] (B2) -- (E2);
							
							\markpointc{0,0.5,1}{90}{2mm}{south}
				\end{tikzpicture}}}\)
			\end{tabular}
			
			\vspace{3ex}
			
			
			\begin{tabular}{@{}c@{\hspace{0.3in}}c@{\hspace{0.3in}}c@{}}
				\textbf{Corner Point I} & \textbf{Corner Point II} & \textbf{Corner Point III}
				\\[1.5mm]
				
				\(\vcenter{\hbox{\begin{tikzpicture}[cubepic, line cap=round]
							\cubecoords{}{(0,0,0)}
							
							\draw[hidden] (C) -- (E);
							\draw[hidden] (C) -- (G);
							
							\fill[chamber] (A) -- (B) -- (E) -- (C) -- cycle; 
							\fill[chamber] (A) -- (C) -- (G) -- (D) -- cycle; 
							\fill[sheet]   (C) -- (E) -- (H) -- (G) -- cycle; 
							\fill[chamber] (A) -- (B) -- (F) -- (D) -- cycle; 
							\fill[sheet]   (B) -- (E) -- (H) -- (F) -- cycle; 
							\fill[sheet]   (D) -- (F) -- (H) -- (G) -- cycle; 
							
							\draw[edge] (D) -- (A) -- (B) -- (E) -- (H) -- (G) -- (D);
							\draw[edge] (B) -- (F) -- (D);
							\draw[edge] (F) -- (H);
							\draw[hidden] (A) -- (C);
							
							\markpointc{1,1,1}{30}{2mm}{south west}
				\end{tikzpicture}}}\)
				&
				\(\vcenter{\hbox{\begin{tikzpicture}[cubepic, line cap=round]
							\cubecoords{2}{(-1,0,1)}  
							\cubecoords{3}{(-1,-1,0)} 
							
							\draw[hidden] (C2) -- (G2);
							\fill[sheet]   (A2) -- (B2) -- (E2) -- (C2) -- cycle;
							\fill[chamber] (A2) -- (C2) -- (G2) -- (D2) -- cycle;
							\fill[chamber] (C2) -- (E2) -- (H2) -- (G2) -- cycle;
							\fill[sheet]   (A2) -- (B2) -- (F2) -- (D2) -- cycle;
							\fill[sheet]   (B2) -- (E2) -- (H2) -- (F2) -- cycle;
							\fill[chamber] (D2) -- (F2) -- (H2) -- (G2) -- cycle;
							
							\draw[hidden] (C3) -- (E3);
							\draw[hidden] (C3) -- (G3);
							\fill[chamber] (A3) -- (B3) -- (E3) -- (C3) -- cycle;
							\fill[chamber] (A3) -- (C3) -- (G3) -- (D3) -- cycle;
							\fill[sheet]   (C3) -- (E3) -- (H3) -- (G3) -- cycle;
							\fill[chamber] (A3) -- (B3) -- (F3) -- (D3) -- cycle;
							\fill[sheet]   (B3) -- (E3) -- (H3) -- (F3) -- cycle;
							\fill[sheet]   (D3) -- (F3) -- (H3) -- (G3) -- cycle;
							
							\draw[edge] (B2) -- (E2) -- (H2) -- (G2) -- (D2) -- (A2);
							\draw[edge] (B2) -- (F2) -- (D2);
							\draw[edge] (F2) -- (H2);
							\draw[occluded] (A2) -- (C2);
							\draw[occluded] (C2) -- (E2);
							
							\draw[edge] (E3) -- (H3);
							\draw[edge] (D3) -- (A3) -- (B3) -- (E3);
							\draw[edge] (G3) -- (D3);
							\draw[edge] (B3) -- (F3) -- (D3);
							\draw[edge] (F3) -- (H3);
							\draw[hidden] (A3) -- (C3);
							
							\draw[glued] (A2) -- (B2);
							
							\markpointc{0,0,1}{-30}{2mm}{north west}
				\end{tikzpicture}}}\)
				&
				\(\vcenter{\hbox{\begin{tikzpicture}[cubepic, line cap=round]
							\cubecoords{}{(0,0,0)}    
							\cubecoords{2}{(-1,0,1)}  
							\cubecoords{3}{(-1,-1,0)} 
							
							\draw[hidden] (C) -- (E);
							\fill[chamber] (A) -- (B) -- (E) -- (C) -- cycle;
							\fill[sheet]   (A) -- (C) -- (G) -- (D) -- cycle;
							\fill[chamber] (C) -- (E) -- (H) -- (G) -- cycle;
							\fill[sheet]   (A) -- (B) -- (F) -- (D) -- cycle;
							\fill[chamber] (B) -- (E) -- (H) -- (F) -- cycle;
							\fill[sheet]   (D) -- (F) -- (H) -- (G) -- cycle;
							
							\draw[hidden] (C2) -- (G2);
							\fill[sheet]   (A2) -- (B2) -- (E2) -- (C2) -- cycle;
							\fill[chamber] (A2) -- (C2) -- (G2) -- (D2) -- cycle;
							\fill[chamber] (C2) -- (E2) -- (H2) -- (G2) -- cycle;
							\fill[sheet]   (A2) -- (B2) -- (F2) -- (D2) -- cycle;
							\fill[sheet]   (B2) -- (E2) -- (H2) -- (F2) -- cycle;
							\fill[chamber] (D2) -- (F2) -- (H2) -- (G2) -- cycle;
							
							\draw[hidden] (C3) -- (E3);
							\draw[hidden] (C3) -- (G3);
							\fill[chamber] (A3) -- (B3) -- (E3) -- (C3) -- cycle;
							\fill[chamber] (A3) -- (C3) -- (G3) -- (D3) -- cycle;
							\fill[sheet]   (C3) -- (E3) -- (H3) -- (G3) -- cycle;
							\fill[chamber] (A3) -- (B3) -- (F3) -- (D3) -- cycle;
							\fill[sheet]   (B3) -- (E3) -- (H3) -- (F3) -- cycle;
							\fill[sheet]   (D3) -- (F3) -- (H3) -- (G3) -- cycle;
							
							\draw[edge] (A) -- (B) -- (E) -- (H) -- (G);
							\draw[edge] (B) -- (F) -- (D);
							\draw[edge] (F) -- (H);
							\draw[occluded] (A) -- (C);
							\draw[occluded] (C) -- (G);
							
							\draw[edge] (E2) -- (H2) -- (G2) -- (D2) -- (A2);
							\draw[edge] (B2) -- (F2) -- (D2);
							\draw[edge] (F2) -- (H2);
							\draw[occluded] (A2) -- (C2);
							\draw[occluded] (C2) -- (E2);
							
							\draw[edge] (D3) -- (A3) -- (B3) -- (E3);
							\draw[edge] (G3) -- (D3);
							\draw[edge] (B3) -- (F3) -- (D3);
							\draw[edge] (F3) -- (H3);
							\draw[hidden] (A3) -- (C3);
							
							\draw[glued] (B2) -- (E2); 
							\draw[glued] (E3) -- (H3); 
							\draw[glued] (A2) -- (B2); 
							
							\markpointc{0,0,1}{-45}{2mm}{north west}
				\end{tikzpicture}}}\)
			\end{tabular}
			
		\end{minipage}
		\par}
	\caption{All local models around points \(p\) in any chamber \(\ol{R}\) of \(S\) as well as around points \(p\) in the closure of the inverse image \(\ol{F^{-1}\left(R\right)}\) of any room  \(R\) under any filling \(F\) of \(S\). The darker blue portrays the walls, while the lighter blue portrays the room.}
	\label{fig:local chamber pictures}
\end{figure}


For any generically immersed 2-sphere \(S\) in 3-space, define \emph{the hardware store of \(S\)} to be the set \(H_S\) that is the union, over all rooms \(R\) of \(S\), of all isomorphism classes of degree \(\abs{w\left(R\right)}\) covering spaces over \(R\). Since each room has finitely generated fundamental group, standard covering space theory implies that \(H_S\) is finite.  Let \(\mathfrak{R}_S^{\neq 0}\) denote the set of all rooms of \(S\) with nonzero wrapping number. By definition, \(H_S\) comes equipped with a surjection \(\pi_S\) onto \(\mathfrak{R}_S^{\neq 0}\), which we call \emph{the hardware store bundle of} \(S\). Define \emph{an aisle of} \(S\) to be a fiber of \(\pi_S\). The hardware stores of equivalent immersed spheres are in canonical bijection.

\emph{A kit of \(S\)} is a space \(K\)  representing the isomorphism class of a global section of \(\pi_S\), so \(K\) comes equipped with a covering map over the complement of \(S\) in 3-space. We call a connected component of \(K\) \emph{a piece}, and the union of pieces coming from a chosen covering space at an aisle \emph{a part}. \emph{A selection from the hardware store of \(S\)} is an object consisting of a kit \(K\) and a global section \(\left[K'\right]\) of \(\pi_S\) such that \(K\) represents \(\left[K'\right]\). \emph{An equivalence of selections} is a morphism given by any isomorphism of covering spaces between the underlying kits.  Connected components of this category are canonically in bijection with the set of global sections of \(\pi_S\), and hence form a finite set called \emph{the set of inequivalent selections of kit from the hardware store of \(S\)}. 

\emph{A building of \(S\)} is a component of the closure of a union over all rooms in \(\mathfrak{R}_S^{\neq 0}\) whose wrapping numbers have a given sign.  The way wrapping numbers change by exactly one across each wall implies that the intersection of any positive building with any negative building is empty or one-dimensional, and that all buildings are compact and smooth 3-manifolds with corners. We call a generically immersed 2-sphere \(S\) in 3-space \emph{positive} if \(S\) has exactly one building, which we call \emph{the building of \(S\)} and denote by \(B_S\). Each point in \(B_S\) can be locally modeled by one of the models portrayed in \cref{fig:building}.  If \(S\) is positive, then there exist fixed orientations so that the wrapping numbers of \(S\) are nonnegative.

\begin{figure}[h!]
	{\centering
		\begin{minipage}{\textwidth}
			\centering
			
			\begin{tabular}{@{}c@{\hspace{0.2in}}c@{\hspace{0.2in}}c@{\hspace{0.2in}}c@{}}
				\shortstack{\textbf{Preimage}\\\textbf{Room Point}} & \shortstack{\textbf{Preimage Interior}\\\textbf{Single Point}} & \shortstack{\textbf{Preimage Interior}\\\textbf{Double Point}} & \shortstack{\textbf{Preimage Interior}\\\textbf{Triple Point}}
				\\[1.5mm]
				
				\(\vcenter{\hbox{\begin{tikzpicture}[preimagepic]
							\cubecoords{}{(0,0,0)}
							
							\draw[edge] (A) -- (B) -- (E) -- (H) -- (G) -- (D) -- (A);
							\draw[edge] (B) -- (F) -- (D);
							\draw[edge] (F) -- (H);
							\draw[hidden] (A) -- (C);
							\draw[hidden] (C) -- (E);
							\draw[hidden] (C) -- (G);
							
							\markpoint{0.5,0.5,0.5}{30}{-2mm}
				\end{tikzpicture}}}\)
				&
				\(\vcenter{\hbox{\begin{tikzpicture}[preimagepic]
							
							\coordinate (xmym) at (-1.396875,-0.219848,0.020206);
							\coordinate (xpym) at (0.204703,-1.338233,-0.408935);
							\coordinate (xpyp) at (1.396875,0.219848,-0.020206);
							\coordinate (xmyp) at (-0.204703,1.338233,0.408935);
							
							\fill[chamberpink] (xmym) -- (xpym) -- (xpyp) -- (xmyp) -- cycle;
							\draw[edge] (xmym) -- (xpym) -- (xpyp) -- (xmyp) -- cycle;
							
							\markpoint{0,0}{30}{-2mm}
				\end{tikzpicture}}}\)
				&
				\(\vcenter{\hbox{\begin{tikzpicture}[preimagepic]
							
							\coordinate (xmym) at (-1.296752,-0.543584,-0.151492);
							\coordinate (xmyp) at (-0.562312,1.276587,0.232661);
							\coordinate (ymzm) at (-0.333759,-0.716641,-1.172617);
							\coordinate (ym) at (-0.367220,-0.910086,-0.192077);
							\coordinate (ymzp) at (-0.400681,-1.103530,0.788464);
							\coordinate (ypzm) at (0.400681,1.103530,-0.788464);
							\coordinate (yp) at (0.367220,0.910086,0.192077);
							\coordinate (ypzp) at (0.333759,0.716641,1.172617);
							\coordinate (xpym) at (0.562312,-1.276587,-0.232661);
							\coordinate (xpyp) at (1.296752,0.543584,0.151492);
							
							\coordinate (xzaux1) at (0.386658,1.022457,-0.377516);
							\coordinate (xzaux2) at (-0.938554,0.344142,0.035865);
							
							\fill[chamberpink] (ym) -- (ymzp) -- (ypzp) -- (yp) -- cycle; 
							\fill[chamberpink] (ym) -- (ymzm) -- (ypzm) -- (yp) -- cycle; 
							\fill[chamberpink] (ym) -- (xpym) -- (xpyp) -- (yp) -- cycle; 
							\fill[chamberpink] (ym) -- (xmym) -- (xmyp) -- (yp) -- cycle; 
							
							\draw[edge] (ym) -- (ymzp);
							\draw[edge] (ymzp) -- (ypzp);
							\draw[edge] (yp) -- (ypzp);
							\draw[edge] (ym) -- (yp);
							\draw[edge] (ymzm) -- (ym);
							\draw[edge] (ymzm) -- (ypzm);
							\draw[edge] (ypzm) -- (xzaux1);
							\draw[edge] (xpym) -- (ym);
							\draw[edge] (xpym) -- (xpyp);
							\draw[edge] (xpyp) -- (yp);
							\draw[edge] (xmym) -- (ym);
							\draw[edge] (xmym) -- (xzaux2);
							
							\draw[hidden] (xzaux1) -- (yp);
							\draw[hidden] (xzaux2) -- (xmyp);
							\draw[hidden] (xmyp) -- (yp);
							
							\markpoint{0,0}{90}{-1mm}
				\end{tikzpicture}}}\)
				&
				\(\vcenter{\hbox{\begin{tikzpicture}[preimagepic]
							
							\coordinate (XmYm) at (-1.407398,-0.136747,0.023026);
							\coordinate (XmZm) at (-0.808417,0.880104,-0.756227);
							\coordinate (Xm) at (-0.766320,0.608761,0.205335);
							\coordinate (XmZp) at (-0.724224,0.337419,1.166897);
							\coordinate (XmYp) at (-0.125243,1.354270,0.387644);
							\coordinate (YmZm) at (-0.683174,-0.474165,-1.143871);
							\coordinate (Ym) at (-0.641078,-0.745508,-0.182309);
							\coordinate (YmZp) at (-0.598982,-1.016851,0.779253);
							\coordinate (Zm) at (-0.042096,0.271343,-0.961562);
							\coordinate (O) at (0.000000,0.000000,0.000000);
							\coordinate (Zp) at (0.042096,-0.271343,0.961562);
							\coordinate (YpZm) at (0.598982,1.016851,-0.779253);
							\coordinate (Yp) at (0.641078,0.745508,0.182309);
							\coordinate (YpZp) at (0.683174,0.474165,1.143871);
							\coordinate (XpYm) at (0.125243,-1.354270,-0.387644);
							\coordinate (XpZm) at (0.724224,-0.337419,-1.166897);
							\coordinate (Xp) at (0.766320,-0.608761,-0.205335);
							\coordinate (XpZp) at (0.808417,-0.880104,0.756227);
							\coordinate (XpYp) at (1.407398,0.136747,-0.023026);
							
							\coordinate (aux1) at (0.326147,0.379275,0.092749);
							\coordinate (aux2) at (0.615403,0.911003,-0.404158);
							\coordinate (aux3) at (0.284050,0.650618,-0.868813);
							\coordinate (aux4) at (-0.025675,0.165495,-0.586467);
							\coordinate (aux5) at (-0.749899,0.502913,0.580431);
							\coordinate (aux6) at (-0.532222,0.660696,-0.830233);
							\coordinate (aux7) at (-0.231576,1.230615,0.357405);
							\coordinate (aux8) at (-1.081253,0.242527,0.115775);
							
							\fill[chamberpink] (O) -- (Yp) -- (YpZp) -- (Zp) -- cycle; 
							\fill[chamberpink] (O) -- (Yp) -- (YpZm) -- (Zm) -- cycle; 
							\fill[chamberpink] (O) -- (Ym) -- (YmZp) -- (Zp) -- cycle; 
							\fill[chamberpink] (O) -- (Ym) -- (YmZm) -- (Zm) -- cycle; 
							\fill[chamberpink] (O) -- (Xp) -- (XpZp) -- (Zp) -- cycle; 
							\fill[chamberpink] (O) -- (Xp) -- (XpZm) -- (Zm) -- cycle; 
							\fill[chamberpink] (O) -- (Xm) -- (XmZp) -- (Zp) -- cycle; 
							\fill[chamberpink] (O) -- (Xm) -- (XmZm) -- (Zm) -- cycle; 
							\fill[chamberpink] (O) -- (Xp) -- (XpYp) -- (Yp) -- cycle; 
							\fill[chamberpink] (O) -- (Xp) -- (XpYm) -- (Ym) -- cycle; 
							\fill[chamberpink] (O) -- (Xm) -- (XmYp) -- (Yp) -- cycle; 
							\fill[chamberpink] (O) -- (Xm) -- (XmYm) -- (Ym) -- cycle; 
							
							\draw[edge] (aux1) -- (Yp);
							\draw[edge] (Yp) -- (YpZp);
							\draw[edge] (Zp) -- (YpZp);
							\draw[edge] (O) -- (Zp);
							\draw[edge] (YpZm) -- (aux2);
							\draw[edge] (aux3) -- (YpZm);
							\draw[edge] (Zm) -- (aux4);
							\draw[edge] (O) -- (Ym);
							\draw[edge] (Ym) -- (YmZp);
							\draw[edge] (Zp) -- (YmZp);
							\draw[edge] (YmZm) -- (Ym);
							\draw[edge] (Zm) -- (YmZm);
							\draw[edge] (Xp) -- (O);
							\draw[edge] (Xp) -- (XpZp);
							\draw[edge] (XpZp) -- (Zp);
							\draw[edge] (Xp) -- (XpZm);
							\draw[edge] (Zm) -- (XpZm);
							\draw[edge] (aux5) -- (XmZp);
							\draw[edge] (XmZp) -- (Zp);
							\draw[edge] (Xp) -- (XpYp);
							\draw[edge] (XpYp) -- (Yp);
							\draw[edge] (Xp) -- (XpYm);
							\draw[edge] (XpYm) -- (Ym);
							\draw[edge] (aux8) -- (XmYm);
							\draw[edge] (XmYm) -- (Ym);
							
							\draw[hidden] (O) -- (aux1);
							\draw[hidden] (aux2) -- (Yp);
							\draw[hidden] (Zm) -- (aux3);
							\draw[hidden] (aux4) -- (O);
							\draw[hidden] (O) -- (Xm);
							\draw[hidden] (Xm) -- (aux5);
							\draw[hidden] (Zm) -- (aux6);
							\draw[hidden] (aux7) -- (XmYp);
							\draw[hidden] (XmYp) -- (Yp);
							\draw[hidden] (Xm) -- (aux8);
							
							\draw[loosedotted] (Xm) -- (XmZm);
							\draw[loosedotted] (aux6) -- (XmZm);
							\draw[loosedotted] (Xm) -- (aux7);
							
							\markpoint{0,0}{10}{-1.5mm}
				\end{tikzpicture}}}\)
			\end{tabular}
			
			\vspace{4ex}
			
			\begin{tabular}{@{}c@{\hspace{0.3in}}c@{\hspace{0.3in}}c@{}}
				\shortstack{\textbf{Preimage Boundary}\\\textbf{Single Point}} & \shortstack{\textbf{Preimage Boundary}\\\textbf{Double Point}} & \shortstack{\textbf{Preimage Boundary}\\\textbf{Triple Point}}
				\\[1.5mm]
				
				\(\vcenter{\hbox{\begin{tikzpicture}[preimagepic]
							
							\coordinate (xmzm) at (-0.927863,0.783947,-0.724221);
							\coordinate (xpzm) at (0.761772,-0.185672,-1.176958);
							\coordinate (xpzp) at (0.927863,-0.783947,0.724221);
							\coordinate (xmzp) at (-0.761772,0.185672,1.176958);
							
							\fill[sheetpink] (xmzm) -- (xpzm) -- (xpzp) -- (xmzp) -- cycle;
							\draw[edge] (xmzm) -- (xpzm) -- (xpzp) -- (xmzp) -- cycle;
							
							\markpoint{0,0}{30}{-2mm}
							\draw[normalvec] (0,0,0) -- (-0.158571,-0.246562,-0.063737); 
				\end{tikzpicture}}}\)
				&
				\(\vcenter{\hbox{\begin{tikzpicture}[preimagepic]
							
							\coordinate (xmym) at (-1,-1,.5);
							\coordinate (xmyp) at (-1,1,.5);
							\coordinate (ymzm) at (0,-1,-.5);
							\coordinate (ym) at (0,-1,.5);
							\coordinate (ypzm) at (0,1,-.5);
							\coordinate (yp) at (0,1,.5);
							\coordinate (xpym) at (1,-1,.5);
							\coordinate (xpyp) at (1,1,.5);
							
							\coordinate (aux1) at (0.807692,1.000000,0.500000);
							
							\draw[edge] (ypzm) -- (yp); 
							\fill[chamberpink] (ym) -- (ymzm) -- (ypzm) -- (yp) -- cycle; 
							\fill[sheetpink]   (ym) -- (xpym) -- (xpyp) -- (yp) -- cycle; 
							\fill[sheetpink]   (ym) -- (xmym) -- (xmyp) -- (yp) -- cycle; 
							
							\draw[edge] (ym) -- (ymzm);
							\draw[edge] (ymzm) -- (ypzm);
							\draw[edge] (ym) -- (xpym);
							\draw[edge] (xpym) -- (xpyp);
							\draw[edge] (xpyp) -- (aux1);
							\draw[edge] (ym) -- (xmym);
							\draw[edge] (xmym) -- (xmyp);
							\draw[edge] (xmyp) -- (yp);
							\draw[edge] (ym) -- (yp);
							\draw[edge] (aux1) -- (yp);
							
							\markpoint{0,0,.5}{145}{-3mm}
				\end{tikzpicture}}}\)
				&
				\(\vcenter{\hbox{\begin{tikzpicture}[preimagepic]
							
							\coordinate (YmZm) at (0.173648,-0.984808,-1.000000);
							\coordinate (Ym) at (0.173648,-0.984808,0.000000);
							\coordinate (YmZp) at (0.173648,-0.984808,1.000000);
							\coordinate (Zm) at (0.000000,0.000000,-1.000000);
							\coordinate (O) at (0.000000,0.000000,0.000000);
							\coordinate (Zp) at (0.000000,0.000000,1.000000);
							\coordinate (YpZm) at (-0.173648,0.984808,-1.000000);
							\coordinate (Yp) at (-0.173648,0.984808,0.000000);
							\coordinate (YpZp) at (-0.173648,0.984808,1.000000);
							\coordinate (XpYm) at (1.158456,-0.811160,0.000000);
							\coordinate (XpZm) at (0.984808,0.173648,-1.000000);
							\coordinate (Xp) at (0.984808,0.173648,0.000000);
							\coordinate (XpZp) at (0.984808,0.173648,1.000000);
							\coordinate (XpYp) at (0.811160,1.158456,0.000000);
							
							\coordinate (aux1) at (-0.173648,0.984808,0.763020);
							\coordinate (aux2) at (-0.173648,0.984808,-0.473963);
							\coordinate (aux3) at (0.000000,0.000000,-0.236982);
							\coordinate (aux4) at (0.601993,1.121574,0.000000);
							
							\fill[sheetpink]   (O) -- (Yp) -- (YpZp) -- (Zp) -- cycle; 
							\fill[sheetpink]   (O) -- (Yp) -- (YpZm) -- (Zm) -- cycle; 
							\fill[sheetpink]   (O) -- (Ym) -- (YmZp) -- (Zp) -- cycle; 
							\fill[sheetpink]   (O) -- (Ym) -- (YmZm) -- (Zm) -- cycle; 
							\fill[chamberpink] (O) -- (Xp) -- (XpZp) -- (Zp) -- cycle; 
							\fill[chamberpink] (O) -- (Xp) -- (XpZm) -- (Zm) -- cycle; 
							\fill[chamberpink] (O) -- (Xp) -- (XpYp) -- (Yp) -- cycle; 
							\fill[chamberpink] (O) -- (Xp) -- (XpYm) -- (Ym) -- cycle; 
							
							\draw[edge] (aux1) -- (YpZp);
							\draw[edge] (Zp) -- (YpZp);
							\draw[edge] (O) -- (Zp);
							\draw[edge] (Zm) -- (aux3);
							\draw[edge] (O) -- (Ym);
							\draw[edge] (Ym) -- (YmZp);
							\draw[edge] (Zp) -- (YmZp);
							\draw[edge] (YmZm) -- (Ym);
							\draw[edge] (Zm) -- (YmZm);
							\draw[edge] (Xp) -- (O);
							\draw[edge] (Xp) -- (XpZp);
							\draw[edge] (XpZp) -- (Zp);
							\draw[edge] (Xp) -- (XpZm);
							\draw[edge] (Zm) -- (XpZm);
							\draw[edge] (Xp) -- (XpYp);
							\draw[edge] (XpYp) -- (aux4);
							\draw[edge] (Xp) -- (XpYm);
							\draw[edge] (XpYm) -- (Ym);
							
							\draw[hidden] (O) -- (Yp);
							\draw[hidden] (Yp) -- (aux1);
							\draw[hidden] (YpZm) -- (aux2);
							\draw[hidden] (Zm) -- (YpZm);
							\draw[hidden] (aux3) -- (O);
							\draw[hidden] (aux4) -- (Yp);
							
							\draw[loosedotted] (aux2) -- (Yp);
							
							\markpoint{0,0}{165}{-2mm}
				\end{tikzpicture}}}\)
			\end{tabular}
			
		\end{minipage}
		\par}
	\caption{All local models around points \(p\) in the domain \(M\) of any filling \(F\) of a generically immersed 2-sphere \(S\) in 3-space. The darker pink portrays the boundary of \(M\), while the lighter pink portrays its complement inside the inverse image \( F^{-1}\left(S\right)\) of \(S\).}
	\label{fig:local pictures in preimage}
	
\end{figure}

\begin{figure}[h!]
	{\centering
		\begin{minipage}{\textwidth}
			\centering
			
			
			{\footnotesize
				\begin{tabular}{@{}c@{\hspace{0.15in}}c@{\hspace{0.15in}}c@{\hspace{0.15in}}c@{}}
					\shortstack{\textbf{Building}\\\textbf{Room Point}} & \shortstack{\textbf{Building Interior}\\\textbf{Single Point}} & \shortstack{\textbf{Building Interior}\\\textbf{Double Point}} & \shortstack{\textbf{Building Interior}\\\textbf{Triple Point}}
					\\[1.5mm]
					
					\(\vcenter{\hbox{\begin{tikzpicture}[buildingpic1]
								\cubecoords{}{(0,0,0)}
								
								\fill[chamberred] (A) -- (B) -- (E) -- (C) -- cycle; 
								\fill[chamberred] (A) -- (C) -- (G) -- (D) -- cycle; 
								\fill[chamberred] (C) -- (E) -- (H) -- (G) -- cycle; 
								\fill[chamberred] (A) -- (B) -- (F) -- (D) -- cycle; 
								\fill[chamberred] (B) -- (E) -- (H) -- (F) -- cycle; 
								\fill[chamberred] (D) -- (F) -- (H) -- (G) -- cycle; 
								
								\draw[edge] (A) -- (B) -- (E) -- (H) -- (G) -- (D) -- (A);
								\draw[edge] (B) -- (F) -- (D);
								\draw[edge] (F) -- (H);
								\draw[hidden] (A) -- (C);
								\draw[hidden] (C) -- (E);
								\draw[hidden] (C) -- (G);
								
								\markpointc{0.5,0.5,0.5}{30}{2mm}{south west}
					\end{tikzpicture}}}\)
					&
					\(\vcenter{\hbox{\begin{tikzpicture}[buildingpic1]
								\cubecoords{A}{(0,0,0)}   
								\cubecoords{D}{(-1,0,0)}  
								\cubefaces{A}{chamberred}
								\cubefaces{D}{chamberred}
								
								\fill[darkredfill] (AA) -- (CA) -- (GA) -- (DA) -- cycle;
								
								\draw[edge] (0,0,0) -- (1,0,0); 
								\draw[edge] (0,0,0) -- (0,1,0); 
								\draw[edge] (0,0,0) -- (0,0,0.25) -- (0,0,1); 
								\draw[edge] (1,0,0) -- (1,1,0); 
								\draw[edge] (1,0,0) -- (1,0,0.25) -- (1,0,1); 
								\draw[hidden] (0,1,0) -- (0.596154,1,0) -- (1,1,0); 
								\draw[edge] (0,1,0) -- (0,1,0.75) -- (0,1,1); 
								\draw[edge] (0,0,1) -- (0.403846,0,1) -- (1,0,1); 
								\draw[edge] (0,0,1) -- (0,1,1); 
								\draw[edge] (1,1,0) -- (1,1,1); 
								\draw[edge] (1,0,1) -- (1,1,1); 
								\draw[edge] (0,1,1) -- (1,1,1); 
								\draw[edge] (-1,0,0) -- (0,0,0); 
								\draw[hidden] (-1,0,0) -- (-1,1,0); 
								\draw[edge] (-1,0,0) -- (-1,0,1); 
								\draw[hidden] (-1,1,0) -- (-0.403846,1,0); 
								\draw[hidden] (-1,1,0) -- (-1,1,0.75) -- (-1,1,1); 
								\draw[edge] (-1,0,1) -- (-0.596154,0,1) -- (0,0,1); 
								\draw[edge] (-1,0,1) -- (-1,1,1); 
								\draw[edge] (-1,1,1) -- (0,1,1); 
								\markpointc{0,0.5,0.5}{90}{2mm}{south}
					\end{tikzpicture}}}\)
					&
					\(\vcenter{\hbox{\begin{tikzpicture}[buildingpic1]
								\cubecoords{A}{(0,0,0)}    
								\cubecoords{B}{(0,0,-1)}   
								\cubecoords{D}{(-1,0,0)}   
								\cubecoords{F}{(-1,0,-1)}  
								\cubefaces{A}{chamberred}
								\cubefaces{B}{chamberred}
								\cubefaces{D}{chamberred}
								\cubefaces{F}{chamberred}
								
								\fill[darkredfill] (AA) -- (BA) -- (EA) -- (CA) -- cycle; 
								\fill[darkredfill] (AA) -- (CA) -- (GA) -- (DA) -- cycle; 
								\fill[darkredfill] (AB) -- (CB) -- (GB) -- (DB) -- cycle; 
								\fill[darkredfill] (AD) -- (BD) -- (ED) -- (CD) -- cycle; 
								
								\draw[edge] (0,0,0) -- (0.403846,0,0) -- (1,0,0); 
								\draw[edge] (0,0,0) -- (0,1,0); 
								\draw[edge] (0,0,0) -- (0,0,0.25) -- (0,0,1); 
								\draw[edge] (1,0,0) -- (1,1,0); 
								\draw[edge] (1,0,0) -- (1,0,0.25) -- (1,0,1); 
								\draw[edge] (0,1,0) -- (0.596154,1,0) -- (1,1,0); 
								\draw[edge] (0,1,0) -- (0,1,0.75) -- (0,1,1); 
								\draw[edge] (0,0,1) -- (0.403846,0,1) -- (1,0,1); 
								\draw[edge] (0,0,1) -- (0,1,1); 
								\draw[edge] (1,1,0) -- (1,1,1); 
								\draw[edge] (1,0,1) -- (1,1,1); 
								\draw[edge] (0,1,1) -- (1,1,1); 
								\draw[edge] (0,0,-1) -- (1,0,-1); 
								\draw[edge] (0,0,-1) -- (0,1,-1); 
								\draw[edge] (0,0,-1) -- (0,0,-0.75) -- (0,0,-1.11022e-16); 
								\draw[edge] (1,0,-1) -- (1,1,-1); 
								\draw[edge] (1,0,-1) -- (1,0,-0.75) -- (1,0,0); 
								\draw[hidden] (0,1,-1) -- (0.596154,1,-1) -- (1,1,-1); 
								\draw[edge] (0,1,-1) -- (0,1,-0.25); 
								\draw[occluded] (0,1,-0.25) -- (0,1,0); 
								\draw[edge] (1,1,-1) -- (1,1,0); 
								\draw[edge] (-1,0,0) -- (-0.596154,0,0) -- (0,0,0); 
								\draw[edge] (-1,0,0) -- (-1,1,0); 
								\draw[edge] (-1,0,0) -- (-1,0,1); 
								\draw[edge] (-1,1,0) -- (-0.403846,1,0); 
								\draw[occluded] (-0.403846,1,0) -- (0,1,0); 
								\draw[hidden] (-1,1,0) -- (-1,1,0.75) -- (-1,1,1); 
								\draw[edge] (-1,0,1) -- (-0.596154,0,1) -- (0,0,1); 
								\draw[edge] (-1,0,1) -- (-1,1,1); 
								\draw[edge] (-1,1,1) -- (0,1,1); 
								\draw[edge] (-1,0,-1) -- (0,0,-1); 
								\draw[hidden] (-1,0,-1) -- (-1,1,-1); 
								\draw[edge] (-1,0,-1) -- (-1,0,-1.11022e-16); 
								\draw[hidden] (-1,1,-1) -- (-0.403846,1,-1); 
								\draw[hidden] (-1,1,-1) -- (-1,1,-0.25); 
								\markpointc{0,0.5,0}{90}{4mm}{south}
					\end{tikzpicture}}}\)
					&
					\(\vcenter{\hbox{\begin{tikzpicture}[buildingpic1]
								\cubecoords{A}{(0,0,0)}     
								\cubecoords{B}{(0,0,-1)}    
								\cubecoords{C}{(0,-1,0)}    
								\cubecoords{D}{(-1,0,0)}    
								\cubecoords{E}{(0,-1,-1)}   
								\cubecoords{F}{(-1,0,-1)}   
								\cubecoords{G}{(-1,-1,0)}   
								\cubecoords{H}{(-1,-1,-1)}  
								\cubefaces{A}{chamberred}
								\cubefaces{B}{chamberred}
								\cubefaces{C}{chamberred}
								\cubefaces{D}{chamberred}
								\cubefaces{E}{chamberred}
								\cubefaces{F}{chamberred}
								\cubefaces{G}{chamberred}
								\cubefaces{H}{chamberred}
								
								\fill[darkredfill] (AA) -- (BA) -- (EA) -- (CA) -- cycle; 
								\fill[darkredfill] (AA) -- (BA) -- (FA) -- (DA) -- cycle; 
								\fill[darkredfill] (AA) -- (CA) -- (GA) -- (DA) -- cycle; 
								\fill[darkredfill] (AB) -- (BB) -- (FB) -- (DB) -- cycle; 
								\fill[darkredfill] (AB) -- (CB) -- (GB) -- (DB) -- cycle; 
								\fill[darkredfill] (AC) -- (BC) -- (EC) -- (CC) -- cycle; 
								\fill[darkredfill] (AC) -- (CC) -- (GC) -- (DC) -- cycle; 
								\fill[darkredfill] (AD) -- (BD) -- (ED) -- (CD) -- cycle; 
								\fill[darkredfill] (AD) -- (BD) -- (FD) -- (DD) -- cycle; 
								\fill[darkredfill] (AE) -- (CE) -- (GE) -- (DE) -- cycle; 
								\fill[darkredfill] (AF) -- (BF) -- (FF) -- (DF) -- cycle; 
								\fill[darkredfill] (AG) -- (BG) -- (EG) -- (CG) -- cycle; 
								
								\draw[edge] (0,0,0) -- (0.403846,0,0) -- (0.596154,0,0) -- (1,0,0); 
								\draw[occluded] (0,0,0) -- (0,1,0); 
								\draw[edge] (0,0,0) -- (0,0,0.25) -- (0,0,0.75) -- (0,0,1); 
								\draw[edge] (1,0,0) -- (1,1,0); 
								\draw[edge] (1,0,0) -- (1,0,0.25) -- (1,0,1); 
								\draw[occluded] (0,1,0) -- (0.192308,1,0) -- (0.596154,1,0); 
								\draw[edge] (0.596154,1,0) -- (1,1,0); 
								\draw[occluded] (0,1,0) -- (0,1,0.5) -- (0,1,0.75); 
								\draw[edge] (0,1,0.75) -- (0,1,1); 
								\draw[edge] (0,0,1) -- (0.403846,0,1) -- (1,0,1); 
								\draw[edge] (0,0,1) -- (0,1,1); 
								\draw[edge] (1,1,0) -- (1,1,1); 
								\draw[edge] (1,0,1) -- (1,1,1); 
								\draw[edge] (0,1,1) -- (1,1,1); 
								\draw[edge] (0,0,-1) -- (0.596154,0,-1) -- (1,0,-1); 
								\draw[occluded] (0,0,-1) -- (0,1,-1); 
								\draw[edge] (0,0,-1) -- (0,0,-0.75) -- (0,0,-0.25); 
								\draw[occluded] (0,0,-0.25) -- (0,0,-1.11022e-16); 
								\draw[edge] (1,0,-1) -- (1,1,-1); 
								\draw[edge] (1,0,-1) -- (1,0,-0.75) -- (1,0,0); 
								\draw[hidden] (0.596154,1,-1) -- (1,1,-1); 
								\draw[occluded] (0,1,-1) -- (0,1,-0.5) -- (0,1,-0.25) -- (0,1,0); 
								\draw[edge] (1,1,-1) -- (1,1,0); 
								\draw[edge] (0,-1,0) -- (0.403846,-1,0) -- (0.807692,-1,0) -- (1,-1,0); 
								\draw[edge] (0,-1,0) -- (0,0,0); 
								\draw[edge] (0,-1,0) -- (0,-1,0.25) -- (0,-1,0.5) -- (0,-1,1); 
								\draw[edge] (1,-1,0) -- (1,0,0); 
								\draw[edge] (1,-1,0) -- (1,-1,0.25) -- (1,-1,0.5) -- (1,-1,1); 
								\draw[edge] (0,-1,1) -- (0.403846,-1,1) -- (0.807692,-1,1) -- (1,-1,1); 
								\draw[edge] (0,-1,1) -- (0,0,1); 
								\draw[edge] (1,-1,1) -- (1,0,1); 
								\draw[edge] (-1,0,0) -- (-0.596154,0,0) -- (-0.403846,0,0); 
								\draw[occluded] (-0.403846,0,0) -- (0,0,0); 
								\draw[occluded] (-1,0,0) -- (-1,1,0); 
								\draw[edge] (-1,0,0) -- (-1,0,0.75) -- (-1,0,1); 
								\draw[occluded] (-1,1,0) -- (-0.807692,1,0) -- (-0.403846,1,0) -- (0,1,0); 
								\draw[hidden] (-1,1,0.75) -- (-1,1,1); 
								\draw[edge] (-1,0,1) -- (-0.596154,0,1) -- (0,0,1); 
								\draw[edge] (-1,0,1) -- (-1,1,1); 
								\draw[edge] (-1,1,1) -- (0,1,1); 
								\draw[edge] (0,-1,-1) -- (1,-1,-1); 
								\draw[edge] (0,-1,-1) -- (0,0,-1); 
								\draw[edge] (0,-1,-1) -- (0,-1,-0.75) -- (0,-1,-0.5) -- (0,-1,-1.11022e-16); 
								\draw[edge] (1,-1,-1) -- (1,0,-1); 
								\draw[edge] (1,-1,-1) -- (1,-1,-0.75) -- (1,-1,-0.5) -- (1,-1,0); 
								\draw[edge] (-1,0,-1) -- (-0.403846,0,-1); 
								\draw[occluded] (-0.403846,0,-1) -- (0,0,-1); 
								\draw[edge] (-1,0,-1) -- (-1,0,-0.25); 
								\draw[occluded] (-1,0,-0.25) -- (-1,0,-1.11022e-16); 
								\draw[edge] (-1,-1,0) -- (-0.596154,-1,0) -- (-0.192308,-1,0) -- (0,-1,0); 
								\draw[edge] (-1,-1,0) -- (-1,0,0); 
								\draw[edge] (-1,-1,0) -- (-1,-1,1); 
								\draw[edge] (-1,-1,1) -- (-0.596154,-1,1) -- (-0.192308,-1,1) -- (0,-1,1); 
								\draw[edge] (-1,-1,1) -- (-1,0,1); 
								\draw[edge] (-1,-1,-1) -- (0,-1,-1); 
								\draw[hidden] (-1,-1,-1) -- (-1,0,-1); 
								\draw[edge] (-1,-1,-1) -- (-1,-1,0); 
								\markpointc{0,0,0}{160}{2.5mm}{south east}
					\end{tikzpicture}}}\)
				\end{tabular}
			}
			
			\vspace{3ex}
			
			
			{\footnotesize
				\begin{tabular}{@{}c@{\hspace{0.35in}}c@{\hspace{0.35in}}c@{}}
					\shortstack{\textbf{Building Boundary}\\\textbf{Single Point}} & \shortstack{\textbf{Building Boundary}\\\textbf{Double Point}} & \shortstack{\textbf{Building Boundary}\\\textbf{Triple Point}}
					\\[1.5mm]
					
					\(\vcenter{\hbox{\begin{tikzpicture}[buildingpic]
								\cubecoords{}{(0,0,0)}
								
								\fill[chamberred] (A) -- (B) -- (E) -- (C) -- cycle; 
								\fill[chamberred] (A) -- (C) -- (G) -- (D) -- cycle; 
								\fill[chamberred] (C) -- (E) -- (H) -- (G) -- cycle; 
								\fill[chamberred] (A) -- (B) -- (F) -- (D) -- cycle; 
								\draw[hidden] (C) -- (E);                            
								\fill[sheetred]   (B) -- (E) -- (H) -- (F) -- cycle;  
								\fill[chamberred] (D) -- (F) -- (H) -- (G) -- cycle; 
								
								\draw[edge] (A) -- (B) -- (E) -- (H) -- (G) -- (D) -- (A);
								\draw[edge] (B) -- (F) -- (D);
								\draw[edge] (F) -- (H);
								\draw[hidden] (A) -- (C);
								\draw[hidden] (C) -- (G);
								
								\markpointc{1,0.5,0.5}{90}{2mm}{south}
					\end{tikzpicture}}}\)
					&
					\(\vcenter{\hbox{\begin{tikzpicture}[buildingpic]
								\cubecoords{A}{(0,0,0)}   
								\cubecoords{B}{(0,0,-1)}  
								\cubecoords{D}{(-1,0,0)}  
								\cubefaces{A}{chamberred}
								\cubefaces{B}{chamberred}
								\cubefaces{D}{chamberred}
								
								\fill[darkredfill] (AA) -- (BA) -- (EA) -- (CA) -- cycle; 
								\fill[darkredfill] (AA) -- (CA) -- (GA) -- (DA) -- cycle; 
								\fill[darkredfill] (AB) -- (CB) -- (GB) -- (DB) -- cycle; 
								\fill[darkredfill] (AD) -- (BD) -- (ED) -- (CD) -- cycle; 
								
								\draw[edge] (0,0,0) -- (0.403846,0,0) -- (1,0,0); 
								\draw[edge] (0,0,0) -- (0,1,0); 
								\draw[edge] (0,0,0) -- (0,0,0.25) -- (0,0,1); 
								\draw[edge] (1,0,0) -- (1,1,0); 
								\draw[edge] (1,0,0) -- (1,0,0.25) -- (1,0,1); 
								\draw[edge] (0,1,0) -- (0.596154,1,0) -- (1,1,0); 
								\draw[edge] (0,1,0) -- (0,1,0.75) -- (0,1,1); 
								\draw[edge] (0,0,1) -- (0.403846,0,1) -- (1,0,1); 
								\draw[edge] (0,0,1) -- (0,1,1); 
								\draw[edge] (1,1,0) -- (1,1,1); 
								\draw[edge] (1,0,1) -- (1,1,1); 
								\draw[edge] (0,1,1) -- (1,1,1); 
								\draw[edge] (0,0,-1) -- (1,0,-1); 
								\draw[edge] (0,0,-1) -- (0,1,-1); 
								\draw[edge] (0,0,-1) -- (0,0,-1.11022e-16); 
								\draw[edge] (1,0,-1) -- (1,1,-1); 
								\draw[edge] (1,0,-1) -- (1,0,-0.75) -- (1,0,0); 
								\draw[hidden] (0,1,-1) -- (0.596154,1,-1) -- (1,1,-1); 
								\draw[edge] (0,1,-1) -- (0,1,-0.25); 
								\draw[occluded] (0,1,-0.25) -- (0,1,0); 
								\draw[edge] (1,1,-1) -- (1,1,0); 
								\draw[edge] (-1,0,0) -- (0,0,0); 
								\draw[edge] (-1,0,0) -- (-1,1,0); 
								\draw[edge] (-1,0,0) -- (-1,0,1); 
								\draw[edge] (-1,1,0) -- (-0.403846,1,0); 
								\draw[occluded] (-0.403846,1,0) -- (0,1,0); 
								\draw[hidden] (-1,1,0) -- (-1,1,0.75) -- (-1,1,1); 
								\draw[edge] (-1,0,1) -- (-0.596154,0,1) -- (0,0,1); 
								\draw[edge] (-1,0,1) -- (-1,1,1); 
								\draw[edge] (-1,1,1) -- (0,1,1); 
								\markpointc{0,0.5,0}{90}{4mm}{south}
					\end{tikzpicture}}}\)
					&
					\(\vcenter{\hbox{\begin{tikzpicture}[buildingpic]
								\cubecoords{A}{(0,0,0)}     
								\cubecoords{B}{(0,0,-1)}    
								\cubecoords{C}{(0,-1,0)}    
								\cubecoords{D}{(-1,0,0)}    
								\cubecoords{F}{(-1,0,-1)}   
								\cubecoords{G}{(-1,-1,0)}   
								\cubecoords{H}{(-1,-1,-1)}  
								\cubefaces{A}{chamberred}
								\cubefaces{B}{chamberred}
								\cubefaces{C}{chamberred}
								\cubefaces{D}{chamberred}
								\cubefaces{F}{chamberred}
								\cubefaces{G}{chamberred}
								\cubefaces{H}{chamberred}
								
								\fill[darkredfill] (AA) -- (BA) -- (EA) -- (CA) -- cycle; 
								\fill[darkredfill] (AA) -- (BA) -- (FA) -- (DA) -- cycle; 
								\fill[darkredfill] (AA) -- (CA) -- (GA) -- (DA) -- cycle; 
								\fill[darkredfill] (AB) -- (BB) -- (FB) -- (DB) -- cycle; 
								\fill[darkredfill] (AB) -- (CB) -- (GB) -- (DB) -- cycle; 
								\fill[darkredfill] (AC) -- (BC) -- (EC) -- (CC) -- cycle; 
								\fill[darkredfill] (AC) -- (CC) -- (GC) -- (DC) -- cycle; 
								\fill[darkredfill] (AD) -- (BD) -- (ED) -- (CD) -- cycle; 
								\fill[darkredfill] (AD) -- (BD) -- (FD) -- (DD) -- cycle; 
								\fill[darkredfill] (BH) -- (EH) -- (HH) -- (FH) -- cycle; 
								\fill[darkredfill] (AF) -- (BF) -- (FF) -- (DF) -- cycle; 
								\fill[darkredfill] (AG) -- (BG) -- (EG) -- (CG) -- cycle; 
								
								\draw[edge] (0,0,0) -- (0.403846,0,0) -- (0.596154,0,0) -- (1,0,0); 
								\draw[occluded] (0,0,0) -- (0,1,0); 
								\draw[edge] (0,0,0) -- (0,0,0.25) -- (0,0,0.75) -- (0,0,1); 
								\draw[edge] (1,0,0) -- (1,1,0); 
								\draw[edge] (1,0,0) -- (1,0,0.25) -- (1,0,1); 
								\draw[occluded] (0,1,0) -- (0.192308,1,0) -- (0.596154,1,0); 
								\draw[edge] (0.596154,1,0) -- (1,1,0); 
								\draw[occluded] (0,1,0) -- (0,1,0.5) -- (0,1,0.75); 
								\draw[edge] (0,1,0.75) -- (0,1,1); 
								\draw[edge] (0,0,1) -- (0.403846,0,1) -- (1,0,1); 
								\draw[edge] (0,0,1) -- (0,1,1); 
								\draw[edge] (1,1,0) -- (1,1,1); 
								\draw[edge] (1,0,1) -- (1,1,1); 
								\draw[edge] (0,1,1) -- (1,1,1); 
								\draw[edge] (0,0,-1) -- (1,0,-1); 
								\draw[occluded] (0,0,-1) -- (0,1,-1); 
								\draw[edge] (0,0,-1) -- (0,0,-0.75) -- (0,0,-0.25); 
								\draw[occluded] (0,0,-0.25) -- (0,0,-1.11022e-16); 
								\draw[edge] (1,0,-1) -- (1,1,-1); 
								\draw[edge] (1,0,-1) -- (1,0,-0.75) -- (1,0,0); 
								\draw[hidden] (0.596154,1,-1) -- (1,1,-1); 
								\draw[occluded] (0,1,-1) -- (0,1,-0.5) -- (0,1,-0.25) -- (0,1,0); 
								\draw[edge] (1,1,-1) -- (1,1,0); 
								\draw[edge] (0,-1,0) -- (0.403846,-1,0) -- (0.807692,-1,0) -- (1,-1,0); 
								\draw[edge] (0,-1,0) -- (0,0,0); 
								\draw[edge] (0,-1,0) -- (0,-1,0.25) -- (0,-1,0.5) -- (0,-1,1); 
								\draw[edge] (1,-1,0) -- (1,0,0); 
								\draw[edge] (1,-1,0) -- (1,-1,0.25) -- (1,-1,0.5) -- (1,-1,1); 
								\draw[edge] (0,-1,1) -- (0.403846,-1,1) -- (0.807692,-1,1) -- (1,-1,1); 
								\draw[edge] (0,-1,1) -- (0,0,1); 
								\draw[edge] (1,-1,1) -- (1,0,1); 
								\draw[edge] (-1,0,0) -- (-0.596154,0,0) -- (-0.403846,0,0); 
								\draw[occluded] (-0.403846,0,0) -- (0,0,0); 
								\draw[occluded] (-1,0,0) -- (-1,1,0); 
								\draw[edge] (-1,0,0) -- (-1,0,0.75) -- (-1,0,1); 
								\draw[occluded] (-1,1,0) -- (-0.807692,1,0) -- (-0.403846,1,0) -- (0,1,0); 
								\draw[hidden] (-1,1,0.75) -- (-1,1,1); 
								\draw[edge] (-1,0,1) -- (-0.596154,0,1) -- (0,0,1); 
								\draw[edge] (-1,0,1) -- (-1,1,1); 
								\draw[edge] (-1,1,1) -- (0,1,1); 
								\draw[edge] (-1,0,-1) -- (-0.403846,0,-1); 
								\draw[occluded] (-0.403846,0,-1) -- (0,0,-1); 
								\draw[edge] (-1,0,-1) -- (-1,0,-0.25); 
								\draw[occluded] (-1,0,-0.25) -- (-1,0,-1.11022e-16); 
								\draw[edge] (-1,-1,0) -- (-0.596154,-1,0) -- (-0.192308,-1,0) -- (0,-1,0); 
								\draw[edge] (-1,-1,0) -- (-1,0,0); 
								\draw[edge] (-1,-1,0) -- (-1,-1,1); 
								\draw[edge] (-1,-1,1) -- (-0.596154,-1,1) -- (-0.192308,-1,1) -- (0,-1,1); 
								\draw[edge] (-1,-1,1) -- (-1,0,1); 
								\draw[edge] (-1,-1,-1) -- (0,-1,-1); 
								\draw[hidden] (-1,-1,-1) -- (-1,0,-1); 
								\draw[edge] (-1,-1,-1) -- (-1,-1,0); 
								\draw[edge] (0,-1,-1) -- (0,0,-1); 
								\draw[edge] (0,-1,-1) -- (0,-1,-0.75) -- (0,-1,-0.5) -- (0,-1,-1.11022e-16); 
								\markpointc{0,0,0}{160}{2.5mm}{south east}
					\end{tikzpicture}}}\)
				\end{tabular}
			}

		\end{minipage}
		\par}
	\caption{All local models around points \(p\) in the building \(B_S\) of any positive \(S\). The darker red portrays \(S\), while the lighter red portrays the rooms with positive wrapping number. See that \(B_S\) admits the structure of a compact and smooth 3-manifold with corners.}
	\label{fig:building}
\end{figure}

 \cref{def:allpushouts,def:instruction manual topology} construct, for each kit \(K\) of any positive \(S\), a compact space \(E_K\), which we call \emph{the instruction manual of \(K\)}. Each \(E_K\) is equipped with a surjective local homeomorphism\footnote{\emph{A local homeomorphism} is a continuous map that is locally a homeomorphism.} \(\phi_K\) onto \(S\), which we call \emph{the instructions for} \(K\). Any filling yields a kit by \cref{cor:filling to kit}, and the instructions for this kit admit a global section by \cref{thm:push-out}.  The fiber \(\phi_K^{-1}\left(p\right)\) above a point \(p\) on \(S\) is defined to be the set of all diagrams with a certain universal property in a finite category \(\mathscr{G}_p\left(K\right)\) from \cref{def:glue category}.  We call a point in \(\phi_K^{-1}\left(p\right)\) \emph{an adhesion of \(K\) at \(p\)}.  An instruction manual is Hausdorff if and only if \(S\) is embedded. \cref{lem:stalknumbers} computes the cardinality of  \(\phi_K^{-1}\left(p\right)\) to be equal to a product of factorials of the wrapping numbers of the chambers containing \(p\), with exponents in a pattern resembling the binomial coefficients.  
 
\emph{The gluing category \(\mathscr{G}_p\left(K\right)\)} is defined for any kit \(K\) of any \(S\), and amalgamates, above a point \(p\) on \(S\), all ways to adhere together the parts of a certain compactification \(\wh{K}\) of \(K\), which we call \emph{the complete kit of \(K\)}. In \cref{lem:assemblage}, we compactify any kit by providing a geometric interpretation in \cref{prop:cut and glue} of a functor as a way to canonically cut and then glue back together a geometrically divided space. This functor was studied by Ralph \cite{Fox1957} and is portrayed in \cref{fig:resolved chamber pictures}.  If \(S\) is fillable, \cref{thm:push-out} establishes the picture  in \cref{fig:instruction manual picture}, in which a diagram links the singular strata of \(S\) in a way that characterizes how the compact covering spaces that make up \(\wh{K}\) are adhered together along their boundaries so as to build the domain of any filling as a quotient space. \cref{thm:globalsection} uses this picture to show that \(S\) is fillable if and only if at least one \(\phi_K\) admits a global section. 

One knows from  Roger \cite{Godement1958} that under the \'etal\'e-space formulation of sheaf theory, any sheaf of sets is equivalent to some local homeomorphism in the topological category. As Morris Hirsch once said (D. Sullivan, personal communication), ``a cross section of the sheaf of local objects is a global version of said object.'' This is exactly the situation for each kit \(K\) of \(S\) with the sheaf of sections of  its instructions \(\phi_K\). \cref{def:assemblysheaf} constructs a natural sheaf of finite groups \(\mathscr{A}_S\), which we call \emph{the assembly sheaf  of \(S\)}.  Recall that a sheaf of groups over a base always has a first cohomology set that contains a preferred point.  For each surjective local homeomorphism \(\phi_K\), its sheaf of sections is a torsor\footnote{A set \(X\) is a torsor for a group \(G\) if there is an action of \(G\) on \(X\) that is free and has exactly one orbit.} for \(\mathscr{A}_S\) by \cref{prop:assemblysheaftorsor}.  Distinguished for each \(K\) is a degree one cohomology class of \(S\) with coefficients in \(\mathscr{A}_S\) that equals the preferred point if and only if there exists a global section of \(\phi_K\). We call this cohomology class associated with \(K\) \emph{the assemblage class of \(K\)}. Equivalent selections of kit produce the same assemblage class by \cref{lem:isomorphic torsors}. Hence, there are only finitely many assemblage classes of \(S\). It turns out that this first cohomology set is always a finite set.


Any equidimensional immersion is a local diffeomorphism, and any local diffeomorphism must produce isomorphisms between local pictures in the smooth category.   Hence smoothness of a model, or lack thereof, is preserved under any filling.  The domain \(M\) of each filling \(F\) of \(S\) is some undetermined compact and smooth manifold with boundary. Geometrically, \(M\) has no corners: only smooth interior points and smooth boundary points. Generated is the picture that any filling produces isomorphic inclusions of the local models of \cref{fig:local pictures in preimage} into the local models of \cref{fig:building}. Assume here that the dimension \(d\) is greater than zero.

 \begin{maintheorem}
 	A generically immersed \(d\)-sphere \(S\) inside Euclidean \(\left(d+1\right)\)-space is fillable if and only if \(S\) is positive and some assemblage class of \(S\) equals the preferred point in the first cohomology of \(S\) with coefficients in its assembly sheaf. Furthermore, the number of inequivalent fillings of \(S\) equals the number of orbits of the deck group of a kit acting freely on the set of global sections of its instructions, summed over one selection of kit for each element in the set of inequivalent selections of kit from the hardware store of \(S\). 
 	\label{thm:maintheorem}
 \end{maintheorem}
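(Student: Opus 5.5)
The plan is to assemble the Main Theorem from the results announced in the discussion above, in three stages: necessity of positivity, the equivalence between fillability and triviality of an assemblage class, and the count of fillings.

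\emph{Positivity.} Given a filling \(F\) of \(S\) by \(M\), orient \(M\) by pulling back the orientation of Euclidean space, and count preimages under \(F\) of a room point. Since \(M\) is compact and \(F\) restricted to \(F^{-1}\left(R\right)\) is a proper local diffeomorphism onto the room \(R\), this restriction is a finite covering. Its degree is locally constant on \(R\) and jumps by exactly one across each wall, because of the local models in \cref{fig:local pictures in preimage}. Comparing with a ray running off to infinity, the degree equals \(w\left(R\right)\) with a fixed sign convention. Hence all wrapping numbers are nonnegative for one choice of orientation, which is what positivity requires. Here I expect to use the fact, given by the local models, that \(F\left(M\right)\) is exactly the union of the closures of the rooms with nonzero wrapping number. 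Connectedness of \(S\) then forces that union to be a single building, so \(S\) is positive. These same coverings are the kit of \cref{cor:filling to kit}.

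\emph{Fillable if and only if some assemblage class is trivial.} For the forward direction, \cref{cor:filling to kit} gives a kit \(K\), and \cref{thm:push-out} gives a global section of \(\phi_K\). By \cref{prop:assemblysheaftorsor}, the sheaf of sections of \(\phi_K\) is an \(\mathscr{A}_S\)-torsor. A torsor is trivial exactly when it has a global section, so the assemblage class of \(K\) is the preferred point. For the converse, if the class of some kit \(K\) is trivial, the torsor has a global section, and \cref{thm:globalsection} builds a filling. That filling is the quotient of the complete kit \(\wh{K}\) in which the boundary strata of its compact covering pieces are glued according to the adhesion selected at each point \(p\) of \(S\). \cref{lem:isomorphic torsors} guarantees that the condition depends only on the equivalence class of the selection, so ``some assemblage class'' is well posed over the finite set of inequivalent selections.

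\emph{Counting.} I would set up a bijection between equivalence classes of fillings and the set of pairs \(\left(\left[K\right], \sigma\right)\), where \(\left[K\right]\) ranges over inequivalent selections and \(\sigma\) is a global section of \(\phi_K\) taken modulo the deck group \(\mathrm{Deck}\left(K\right)\).
\begin{enumerate}[(a)]
\item \textbf{Filling to pair.} A filling \(F\) gives the kit \(F^{-1}\left(\mathbb{R}^{d+1}\setminus S\right)\) together with the section supplied by \cref{thm:push-out}.
\item \textbf{Equivalences.} An equivalence of fillings, a diffeomorphism \(M\to M'\) over the common image, restricts to an isomorphism of kits over the complement of \(S\). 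That isomorphism carries one section to the other.
\item \textbf{Fixing a representative.} Fixing one representative kit \(K\) per selection, these isomorphisms are exactly the elements of \(\mathrm{Deck}\left(K\right)\). This group acts on the adhesions by functoriality of \(\mathscr{G}_p\left(K\right)\), hence acts on global sections.
\item \textbf{Freeness.} If a deck transformation \(g\) fixes a section \(\sigma\), then \(g\) extends to an automorphism of the glued manifold \(M\) over its image. Such an automorphism is a lift of the identity through the local diffeomorphism \(F\) on the connected manifold \(M\) (I expect \(M\) to be connected). It fixes the preimages of points in rooms of wrapping number one, which exist near the unbounded room, so \(g\) is the identity.
\item \textbf{Inverse.} \cref{thm:globalsection} provides the inverse map from pairs to fillings. \cref{thm:push-out} shows the two composites are the identity: the domain of a filling is recovered up to equivalence as the quotient of \(\wh{K}\) by its own section.
\end{enumerate}
Summing the orbit counts over the finite set of inequivalent selections gives the stated number.

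\emph{Main obstacle.} I expect the hardest step to be the well-definedness and freeness in the counting. Specifically, one must check that every equivalence of fillings is induced by a deck transformation of the chosen representative kit. One must also check that the reconstruction from \cref{thm:push-out} is natural with respect to kit isomorphisms. I would attack this by tracking each adhesion as a diagram in \(\mathscr{G}_p\left(K\right)\) and verifying that deck transformations act on these diagrams compatibly with the quotient construction.
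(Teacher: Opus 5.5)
Your proposal is correct and follows the paper's own route. Positivity and the kit come from the covering-degree and wall-crossing argument of \cref{lem:unbounded room}, \cref{lem:wrapping numbers} and \cref{cor:filling to kit}; the equivalence with a trivial assemblage class comes from \cref{thm:push-out}, \cref{thm:globalsection} and \cref{prop:assemblysheaftorsor}; and the count comes from dividing global sections by the deck group of a fixed representative kit, as in the paper's closing discussion. Your freeness argument, via unique lifting through a wrapping-number-one room, is more explicit than the paper's appeal to \cref{lem:isomorphic torsors}, and the connectedness of \(M\) you expect does hold: \(\partial M\) is a single \(d\)-sphere, and no closed component of \(M\) can immerse into \(\rl^{d+1}\) because its image would be open and compact.
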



The following story is meant to convey to the reader the nature of the argument used to prove the main theorem, in an intuitive yet precisable way. One may notice this story differs from the formal exposition in that the story concerns chambers rather than rooms, but properties of \emph{the \citeauthor{Fox1957} completion} functor make these formulations equivalent by \cref{lem:assemblage}. Suppose you are given a generically immersed 2-sphere \(S\) in 3-space on your workbench in front of you, with the problem being to fill \( S\) with some compact and smooth 3-manifold. One knows of an a priori way to obstruct the existence of a filling via a simple measurement involving finitely many rays that intersect \(S\) finitely many times. In the case that the problem might still have a solution, there is now a nonnegative wrapping number \(w\left(C\right)\) assigned to each chamber \(C\). Break \(S\) down into the disjoint union of its chambers, then check which chambers are manifolds. For each chamber that is not a manifold, use your Fox completion X-Acto knife to resolve the chamber canonically into a manifold with corners by cutting the chamber along its mould lines, then pulling apart. Now go to the hardware store that has one aisle for each chamber of \(S\). For each chamber \(C\), aisle \(C\) contains exactly the isomorphism classes of all degree \(w\left(C\right)\) covering spaces of the resolved copy of \(C\), so including the disconnected covering spaces. If \(w\left(C\right) = 0\), then aisle \(C\) is empty. If \(w\left(C\right) = 1\), or the room of \(C\) is simply connected, then aisle \(C\) has exactly one isomorphism class of a part. For each chamber \(C\), select from aisle \(C\) exactly one part. Once exactly one part from each aisle has been selected, the complete kit now obtained comes automatically with its instruction manual and instructions for how to adhere together the parts of that kit.  Go back to your workbench with the selected kit, then try to follow its instructions so as to assemble all parts of that kit together into some smooth 3-manifold with boundary. As you assemble a filling from a kit, there might be choices which lead to assemblages that cannot be continued, while other choices might lead to assemblages that fill in inequivalent ways. If for some selection of kit no filling can be built, then one obtains an algebraic reason why no assemblage of parts will work for any equivalent selection of kit. If this happens, then go back to the hardware store of \(S\) and exchange at least one part to obtain an inequivalent kit, and hence new instructions. Go back to your workbench, then try to assemble a filling from this new complete kit, and so on, until all finitely many inequivalent selections of kit from the hardware store of \(S\) are exhausted. Suppose there exists some assemblage of parts from a kit that forms a filling of \(S\), and that this kit has two pieces from the same part related by a deck transformation. While you assemble those pieces of that part, you must choose which isomorphic piece to adhere where. Despite this choice, the two fillings obtained are equivalent because the gluing geometry for how the parts are adhered has not changed. The first time any part is adhered during the construction of a filling, the full deck group of that part presents symmetries that only result in mosaic recolorings of the domain that all yield equivalent fillings. 

\begin{figure}[h]
	\centering
	\includegraphics[width=0.8\textwidth]{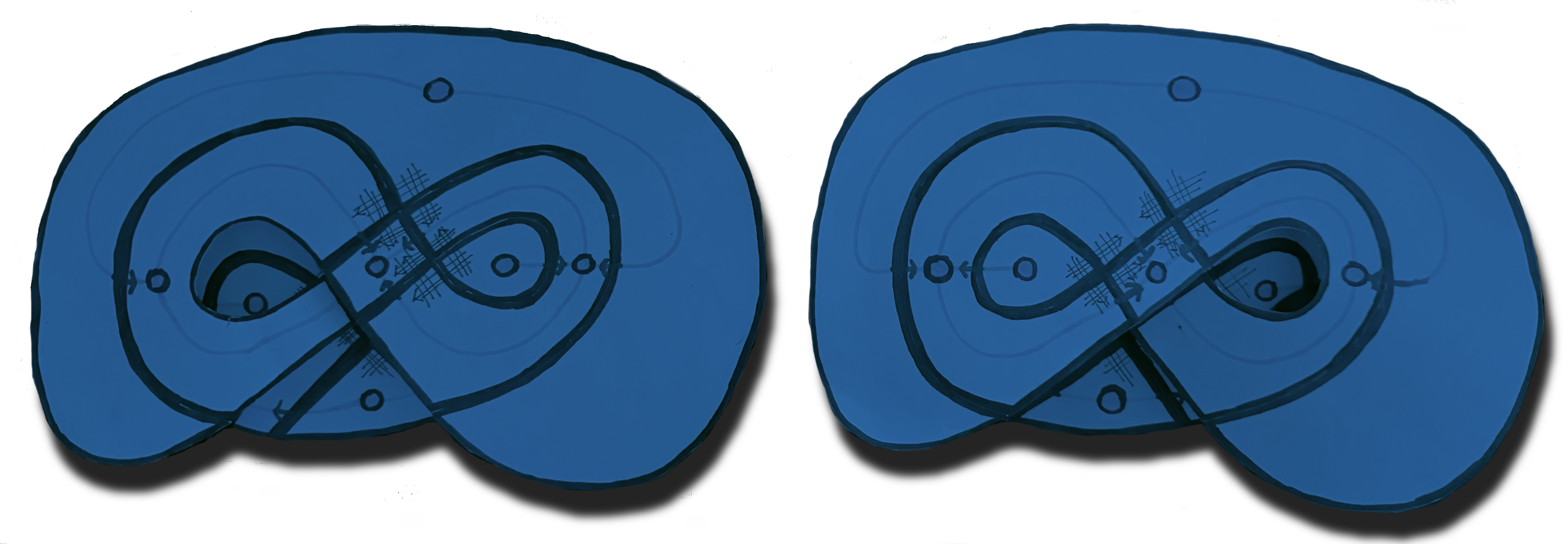}
	\caption{Two inequivalent fillings, by the disk, of an immersed circle in the plane. Both models are built from the same kit of paper pieces from the office supply store. The underlying generically immersed circle in the plane being filled is an example due to John Milnor.}
	\label{fig:paper immersions}
\end{figure}

\section{Three A Priori Obstructions to Bounding an Immersed Compact 3-Manifold in 3-Space}

Below are three a priori obstructions that provide useful necessary conditions for a generically immersed 2-sphere \(S\) in 3-space to be fillable by a compact 3-manifold. The first a priori obstruction concerns the existence of an `inward-pointing' direction field along \(S\). \emph{A normal direction field along \(S\)} is a section of the sphere bundle of a normal bundle of \(S\), and up to equivalence there are exactly two. A normal direction field records the direction in which a filling begins. The first a priori obstruction shows that no compact filling can throw a point into the unbounded room lest the extension fold back and crease itself outside the category of immersions. The second a priori obstruction gives the aforementioned simple conditions on orientations and wrapping numbers. This necessary condition imposed by this second obstruction is a nonnegativity condition on wrapping numbers, of which this inward-pointing direction field condition is a special case. The first a priori obstruction is nonetheless interesting because, when the condition imposed by this obstruction is satisfied, there exists a well-defined convention for consistently computing wrapping numbers in a way that does not involve choosing orientations. One simply endows \(S\) with \emph{the inward-pointing direction field along \(S\)} and adopts our natural convention for computing wrapping numbers that is portrayed in \cref{fig:ray intersection}. To compute, relative to any transverse ray running towards infinity, one simply subtracts the number of outward-pointing intersections from the number of inward-pointing intersections with \(S\). The technique created by the third a priori obstruction is portrayed in \cref{fig:transversal slice} and applied in \cref{fig:milnorpush} to prove that another condition beyond nonnegativity is needed to find an extension.

Orientations can be subtle, so this first a priori obstruction proposes a natural convention that liberates one from choosing orientations without any loss of information. Notice that only after choosing both a hand with which to orient 3-space and a normal direction at just one point of \(S\) is a unique compatible orientation for the 2-sphere determined. Each filling \(F\) of \(S\) by \(M\) uniquely determines a normal direction field. Simply look at the normal direction to \(S\) determined by the image of any inward-pointing vector at the boundary of \(M\) under the injective derivative of \(F\).  All fillings with compact domain determine equivalent normal direction fields because no compact filling can throw a point into the unbounded room. 

Call the intersection of \(S\) with its unbounded chamber \emph{the peripheral boundary of \(S\)}. To determine whether an inward-pointing normal direction field exists, choose any point \(p\) on the peripheral boundary of \(S\) and take the normal direction there that points inward away from the unbounded room. Any normal direction at a point uniquely distinguishes the equivalence class of normal direction fields which coincide with the direction chosen at that point. If any other point on the peripheral boundary has determined an outward-pointing direction because of the choice at \(p\), then \(S\) is not fillable with a compact domain. The domain of any filling \(F\) of \(S\) here is assumed compact.

\begin{lem}[First a priori obstruction] 	\label{lem:unbounded room}
	The intersection of the image of any filling \(F\) of \(S\) with the unbounded room of \(S\) is empty.
\end{lem}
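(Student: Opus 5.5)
The plan is a degree argument. Let \(F\colon M\to\rl^3\) be a filling with \(M\) compact, and suppose some \(x\in M\) has \(F(x)\) in the unbounded room \(U\). Restricted to the open set \(F^{-1}(U)\), the map \(F\) never meets \(S\). Since \(F\left(\partial M\right)=S\), the set \(F^{-1}(U)\) lies in the interior of \(M\), so \(F^{-1}(U)\) is an open subset of the interior. I aim to show that \(F\) restricted to \(F^{-1}(U)\to U\) is a proper local diffeomorphism. It is then a finite covering over the connected manifold \(U\), hence surjective onto \(U\). But \(U\) is unbounded while \(F(M)\) is compact, which is a contradiction.

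The key steps, in order, are these.

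\begin{enumerate}[(i)]
\item Because \(F\) is an equidimensional immersion, it is a local diffeomorphism at interior points. So \(F|_{F^{-1}(U)}\) is a local diffeomorphism, and in particular an open map.

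\item Properness. Let \(C\subset U\) be compact. Then \(F^{-1}(C)\) is closed in \(M\), hence compact since \(M\) is compact. It lies inside \(F^{-1}(U)\), so the restriction is proper.

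\item Surjectivity. A proper local homeomorphism between manifolds has open image, and its image is also closed in \(U\), because proper maps to locally compact Hausdorff spaces are closed. Since \(U\) is connected and \(F^{-1}(U)\) is nonempty by assumption, the image is all of \(U\).

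\item Conclusion. \(U\) is the complement of a compact set and is unbounded, so it cannot lie in the compact set \(F(M)\). This contradicts step (iii), so \(F(M)\cap U=\emptyset\).
\end{enumerate}

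The only delicate point is step (ii) together with the claim \(F^{-1}(U)\subset \operatorname{int} M\). One must check that the boundary maps into \(S\); this holds because \(F|_{\partial M}\) parameterizes \(S\). Points of \(U\) near \(S\) could be approached by images of boundary points, but such limits land on \(S\), not in \(U\), so properness over \(U\) is unaffected. Note that the intersection of the image with the closure of \(U\) can be nonempty, namely the peripheral boundary. The lemma concerns only the open room, which is exactly what the argument handles.

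The same argument with \(U\) replaced by any room \(R\) shows that the preimage of \(R\) is a finite covering of \(R\). Its degree \(|w(R)|\) is computed along a ray, which is the adaptation the paper's footnote refers to. For \(U\), the degree is forced to be \(0\) by unboundedness.
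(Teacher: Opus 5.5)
Your argument is correct and takes essentially the same route as the paper. Both restrict \(F\) to the preimage of the complement of \(S\), use that this restriction is a proper local diffeomorphism to conclude that meeting the unbounded room forces \(F\) to surject onto it, and then contradict compactness of \(F(M)\). The only difference is cosmetic: the paper phrases the surjectivity step as local constancy of the finite fiber cardinality on a room, whereas you phrase it as the image being open and closed in the connected room \(U\).
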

\begin{proof}
	Define on the complement of \(S\) in 3-space a relation that relates two points if their fibers under \(F\) have equal cardinality. All these cardinalities are finite because the domain of \(F\) is assumed compact. Equality of integers is an equivalence relation and any equivalence relation partitions its underlying set into disjoint subsets. The restriction of  \(F\) to the inverse image of the complement of \(S\) in 3-space being a proper local diffeomorphism implies that the equivalence classes of this equivalence relation are open, so all points in the same room have equal fiber cardinality.  If the intersection of the image of \(F\) with the unbounded room were nonempty, then each point of the unbounded room would be in the image of \(F\). However, the continuous image of any compact space is compact and hence bounded in 3-space, producing a contradiction.
\end{proof}

If there exists an inward-pointing direction field along \(S\), then our natural convention computes all wrapping numbers in a consistent manner.  The nonexistence of an inward-pointing direction field implies that there must always exist a room with wrapping number negative one that has a wall on the peripheral boundary. The resulting geometric contradiction is that any ray leaving a negative room has more material going out than in, forcing some material into the unbounded room lest the extension fold back before then. 

\begin{lem}[Second a priori obstruction] 	\label{lem:wrapping numbers}
Let \(F\) be any filling of \(S\) and \(w\left(R\right)\) a wrapping number of any room \(R\). Then \(w\left(R\right)\) is nonnegative when computed using our natural convention relative to the inward-pointing direction field determined by the derivative of \(F\). If \(w\left(R\right)=0\), then the inverse image \(F^{-1}\left(R\right)\) is empty; otherwise, the restriction map \(F_R : F^{-1}\left(R\right) \to R\) is a degree \(w\left(R\right)\) covering map.
\end{lem}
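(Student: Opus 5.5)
The plan is to reuse the fiber-cardinality argument from \cref{lem:unbounded room} and then compute that cardinality by a ray. First, the restriction of \(F\) to \(F^{-1}\left(\rl^3\setminus S\right)\) is a proper local diffeomorphism. It is proper because \(M\) is compact and the preimage of \(S\) is closed, so preimages of compact subsets of a room are compact. Over each room \(R\), the map \(F_R : F^{-1}\left(R\right)\to R\) is therefore a proper local homeomorphism onto a connected manifold. By the standard argument it is a finite covering map, and its degree \(n\left(R\right)\) is the constant fiber cardinality already seen to be locally constant in \cref{lem:unbounded room}. If \(n\left(R\right)=0\), the preimage is empty. It remains to prove \(n\left(R\right)=w\left(R\right)\) with \(w\) computed by the natural convention relative to the normal field induced by \(F\). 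Nonnegativity then follows automatically.

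Second, I will show that \(n\) jumps by exactly one across each wall, with the sign determined by the normal field. Take a single point \(q\) of \(S\) lying on a wall between rooms \(R_1\) and \(R_2\), with the inward normal of \(F\) pointing into \(R_2\). Here I use the local models of \cref{fig:local pictures in preimage}. Near \(q\), the preimage \(F^{-1}\left(q\right)\) consists of two kinds of points.
\begin{enumerate}[(a)]
\item Interior single points. Near each of these, \(F\) is a local diffeomorphism from a neighborhood in the interior of \(M\), covering both sides of the wall. Each contributes one sheet to both \(R_1\) and \(R_2\).
\item Exactly one boundary point. There is exactly one because \(F|_{\partial M}\) parameterizes \(S\) and \(q\) is a single point. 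A half-ball neighborhood of it maps onto the side into which the inward normal points, so it contributes one sheet to \(R_2\) only.
\end{enumerate}
Counting sheets on each side gives \(n\left(R_2\right)=n\left(R_1\right)+1\). Crossing the wall in the inward direction raises \(n\) by one, and crossing outward lowers it by one.

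Third, integrate along a ray. Choose a ray from a point of \(R\) that meets \(S\) transversely, only at single points, and runs to infinity. Generic choice makes this possible, since the double and triple curves have codimension at least two in the relevant sense. By \cref{lem:unbounded room}, \(n\) vanishes on the unbounded room. Walking back along the ray and summing the jumps from the second step gives \(n\left(R\right)\) as the number of inward-pointing crossings minus the number of outward-pointing crossings. This is exactly the natural convention of \cref{fig:ray intersection}, so \(n\left(R\right)=w\left(R\right)\ge 0\).

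The main obstacle is the second step. One must justify carefully that near a single point \(q\), the fiber \(F^{-1}\left(q\right)\) contains exactly one boundary point and that each interior preimage covers both sides. This uses that \(F|_{\partial M}\) parameterizes \(S\) injectively over single points, that \(F\) is a local diffeomorphism up to the boundary, and the local models of \cref{fig:local pictures in preimage}. The rest is routine covering-space theory.
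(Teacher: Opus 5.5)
Your proposal is correct and follows essentially the same route as the paper. Both arguments get the covering property from \(F_R\) being a proper local homeomorphism. Both prove a wall-crossing formula at single points that rests on exactly one preimage lying on \(\partial M\), on the side of the induced normal. Both then propagate along a transverse ray anchored at the unbounded room by \cref{lem:unbounded room}. Your telescoping sum replaces the paper's induction on the number of intersection points, and your generic choice of a ray meeting \(S\) only at single points makes explicit a hypothesis the paper's wall-crossing step uses without stating it.
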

\begin{proof}
	It is well known that any proper local homeomorphism with a Hausdorff domain is a covering map; therefore, if \(F^{-1}\left(R\right)\) is nonempty, then \(F_R\) is a covering map. Now choose any point \(p\) in \(R\) and let \(L\) be any ray with origin \(p\) that exits \(R\) and intersects \(S\) transversely on its way to infinity.  We prove that the degree \(\# F^{-1}\left(p\right)\) over \(R\) equals \(w\left(R\right)\) by first proving a wall-crossing formula, then inducting on the number of intersections of \(L\) with \(S\). Let \(I\) be any short line segment embedded in 3-space that intersects \(S\) transversely at exactly one point \(q\), which is a single point of \(S\). Suppose that \(I\) is oriented from its endpoint \(e'\) in \(R'\) towards its other nearby endpoint \(e\) in \(R\). Our wall-crossing formula is the equality of differences
\begin{align*} w\left(R\right) - w\left(R'\right) = \#F^{-1}\left(e\right) - \#F^{-1}\left(e'\right).
	\end{align*}
The difference in wrapping numbers equals positive one if the normal direction at \(q\) points into \(R\). The signed intersection number of \(I\) with \(S\) equals positive one if the normal direction at \(q\) points towards \(e\). Therefore, \(w\left(R\right) - w\left(R'\right)\) equals the signed intersection number of \(I\) with \(S\) by convention. The inverse image \(F^{-1}\left(I\right)\) is a disjoint union of finitely many intervals. Each interval is embedded, and all but one lie inside the interior, with the exception having on the boundary exactly an endpoint. In the difference of degrees, the endpoints of the interior intervals cancel. The last interval will contribute positive one in a manner consistent with the above because the direction in which to begin a filling of \(S\) coincides with the normal direction field along \(S\). 

The unbounded room is the unique room  for which  \(L\) can intersect \(S\) zero times. \cref{lem:unbounded room} proves the base case as \(F\) can throw no point into the unbounded room. The induction hypothesis says that if some ray \(L'\) whose origin is a room point \(p'\) intersects \(S\) at most \(n-1\) times before running off to infinity, then \(w\left(R'\right) = \#F^{-1}\left(p'\right)\), where \(R'\) always denotes the room containing \(p'\). Suppose \(L\) intersects \(S\) exactly \(n\) times, with the intersection points \(\left\{q_1,q_2,\dots,q_n\right\}\) ordered from \(p\) walking towards infinity. Consider a room point \(p'\) on \(L\) between \(q_1\) and \(q_2\). Apply the induction hypothesis to the subray of \(L\) with origin \(p'\) to deduce  that \(w\left(R'\right) = \#F^{-1}\left(p'\right)\). Intersect a small open neighborhood of \(q_1\) with \(L\) and call that intersection \(I\), then give \(I\) the orientation induced from \(L\). The wall-crossing formula implies that \(w\left(R'\right) - w\left(R\right) = \#F^{-1}\left(p'\right) - \#F^{-1}\left(p\right)\), where substituting \(w\left(R'\right) = \#F^{-1}\left(p'\right)\) yields the desired equation.
\end{proof}

\begin{cor}		\label{cor:filling to kit}
		Let \(F\) be any filling of \(S\). Then \(S\) is positive and the image of \(F\) equals the building \(B_S\) of \(S\). Furthermore, the inverse image \(F^{-1}\left(B_S \setminus S\right)\) of the complement of \(S\) inside \(B_S\), with its covering map given by restricting \(F\) to  \(F^{-1}\left(B_S \setminus S\right)\), is a kit of \(S\).
\end{cor}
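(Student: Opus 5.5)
The plan is to read everything off \cref{lem:wrapping numbers}, applied with the inward-pointing direction field that the derivative of \(F\) determines along \(S\).

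\textbf{Positivity.} First I show that the image \(F\left(M\right)\) equals the closure of the union of the rooms \(R\) with \(w\left(R\right)>0\), which is the union of the positive buildings.

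For a room \(R\), \cref{lem:wrapping numbers} says \(F^{-1}\left(R\right)\) is empty when \(w\left(R\right)=0\). When \(w\left(R\right)>0\), the restriction \(F_R\) is a covering of positive degree and hence surjective onto \(R\). By the same lemma, no room has negative wrapping number. So \(F\left(M\right)\setminus S\) is exactly the union of the rooms with positive wrapping number.

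Since \(M\) is compact, \(F\left(M\right)\) is closed. Every point of \(S\) lies in \(F\left(\partial M\right)\subseteq F\left(M\right)\). Each point of \(F\left(M\right)\) lies in \(F\left(\operatorname{int} M\right)\) or in \(F\left(\partial M\right)=S\). Interior points map to open sets, and \(S\) is nowhere dense. Together these give that \(F\left(M\right)\) equals the closure of the positive rooms. Every point of \(S\) is in the closure of some positive room, because the inward normal at a single point points into a room that is hit by \(F\).

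Since \(M\) may be disconnected, connectedness of the building cannot be taken directly from \(M\). I would argue it as follows instead. \(S\) is connected, being the image of \(\sph^2\), and \(S\subseteq F\left(M\right)\). Each positive room has \(S\) in its boundary, because its frontier lies in \(S\) and it is bounded. Hence the closure of the union of the positive rooms is connected. Together with the absence of negative rooms, this gives exactly one building, so \(S\) is positive and \(F\left(M\right)=B_S\).

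\textbf{The kit.} The set \(B_S\setminus S\) is the disjoint union of the rooms in \(\mathfrak{R}_S^{\neq 0}\), since all nonzero wrapping numbers are positive. Thus \(F^{-1}\left(B_S\setminus S\right)\) is the disjoint union over these rooms \(R\) of \(F^{-1}\left(R\right)\). By \cref{lem:wrapping numbers}, each restriction \(F_R\) is a covering of degree \(w\left(R\right)=\abs{w\left(R\right)}\). Its isomorphism class therefore lies in the aisle \(\pi_S^{-1}\left(R\right)\). Choosing these classes room by room gives a global section of \(\pi_S\), and the restricted \(F\) represents it, which is precisely the definition of a kit.

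\textbf{Main obstacle.} The step I expect to be delicate is the connectedness/positivity claim: ensuring that no room of positive wrapping number is separated from the others in a way that produces two buildings. I would handle this through the connectedness of \(S\) and the observation that each room's closure meets \(S\) in walls. If needed, I would invoke the fact stated in the definitions that positive and negative buildings meet only in lower-dimensional sets, with the negative side empty here.
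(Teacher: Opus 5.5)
Your proposal is correct and takes the same route as the paper, which states this result as an immediate consequence of \cref{lem:wrapping numbers} (together with \cref{lem:unbounded room}) without a separate argument; your details, including the connectedness of the building deduced from the connectedness of \(S\), fill in what the paper leaves implicit. Two phrasings need tightening: each positive room is bounded and so has a \emph{nonempty} frontier contained in \(S\) (not all of \(S\)), and extending the single-point argument to double and triple points of \(S\) uses the density of single points.
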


Since all wrapping numbers are nonnegative and change by exactly one across each wall in a way portrayed by \cref{fig:ray intersection,fig:instruction manual picture}, the models portrayed in \cref{fig:local chamber pictures,fig:building} are indeed the local models. One reason is that locally only rooms with the same wrapping number might be contained in the same room globally. Furthermore, there always exists a unique chamber with minimum wrapping number as well as a unique chamber with maximum wrapping number among all chambers containing a given point in 3-space.  The local models of \cref{fig:local pictures in preimage} are justified simply because \(F\) is a local diffeomorphism of some 3-manifold with boundary into 3-space. We have all local models of \(S\) already given to us within \cref{fig:local pictures}, so we simply transport those local models back to the domain. 

Portrayed in \cref{fig:transversal slice}, the third a priori obstruction involves slicing \(S\) transversely with embedded planes and checking whether those intersections are fillable inside those intersecting planes. This third obstruction suggests performing all sorts of planar slicing tests to obstruct fillability. A difficulty in constructing fillings from this level-by-level point of view is transporting a choice of extension at one level consistently to all the other levels. Moreover, the phenomenon of inequivalent extensions adds another layer of difficulty there.

 \begin{figure}[h]
 	{\centering
 		\begin{minipage}{\textwidth}
 			\centering
 			
 			\tdplotsetmaincoords{60}{300}
 			\scalebox{\rayScale}{%
 				\begin{tikzpicture}[line join=round, >=Stealth, tdplot_main_coords]
 					
 					\draw[wavyplane]
 					(4.9676,-2.4674,-2.4182)
 					.. controls (4.9676,-0.9072,-2.6160) and (4.9676,0.8781,-2.8054) .. (4.9676,2.6402,-2.4614)
 					.. controls (4.9676,2.1816,-0.7433) and (4.9676,2.8965,0.6706) .. (4.9676,2.4492,2.4856)
 					.. controls (4.9676,0.6969,3.0353) and (4.9676,-0.9683,3.0052) .. (4.9676,-2.5902,2.4510)
 					.. controls (4.9676,-2.2285,0.8155) and (4.9676,-2.0394,-0.6366) .. (4.9676,-2.4674,-2.4182);
 					
 					\draw[wavyplane]
 					(10.4702,-2.5877,-2.5639)
 					.. controls (10.4702,-1.0006,-2.5759) and (10.4702,0.5273,-2.7169) .. (10.4702,2.2301,-2.4622)
 					.. controls (10.4702,2.8756,-0.6317) and (10.4702,2.9344,1.0079) .. (10.4702,2.4183,2.6026)
 					.. controls (10.4702,0.8779,2.4214) and (10.4702,-0.6599,2.3004) .. (10.4702,-2.4368,2.5099)
 					.. controls (10.4702,-2.8216,0.6880) and (10.4702,-2.4755,-0.8304) .. (10.4702,-2.5877,-2.5639);
 					
 					\draw[wavyplane]
 					(15.9729,-2.2356,-2.2863)
 					.. controls (15.9729,-0.6480,-1.9796) and (15.9729,0.6475,-2.6601) .. (15.9729,2.2166,-2.5787)
 					.. controls (15.9729,1.9740,-0.7269) and (15.9729,2.3694,0.8600) .. (15.9729,2.6338,2.4318)
 					.. controls (15.9729,1.1298,2.0601) and (15.9729,-0.4632,2.1861) .. (15.9729,-2.2253,2.3986)
 					.. controls (15.9729,-2.5923,0.8243) and (15.9729,-1.9332,-0.7130) .. (15.9729,-2.2356,-2.2863);
 					
 					\draw[wavyplane]
 					(21.4755,-2.5874,-2.3393)
 					.. controls (21.4755,-1.0742,-2.9007) and (21.4755,0.7719,-2.7371) .. (21.4755,2.2764,-2.3797)
 					.. controls (21.4755,2.0103,-0.6551) and (21.4755,2.2046,0.8645) .. (21.4755,2.2698,2.4093)
 					.. controls (21.4755,0.7840,2.0067) and (21.4755,-0.9097,2.9326) .. (21.4755,-2.5773,2.3401)
 					.. controls (21.4755,-2.9182,0.8551) and (21.4755,-2.4466,-0.6230) .. (21.4755,-2.5874,-2.3393);
 					
 					\draw[wavyplane]
 					(26.9781,-2.6336,-2.4076)
 					.. controls (26.9781,-1.1041,-2.5304) and (26.9781,0.5193,-2.8394) .. (26.9781,2.3278,-2.3992)
 					.. controls (26.9781,2.8291,-0.8807) and (26.9781,2.7516,0.7046) .. (26.9781,2.3867,2.4241)
 					.. controls (26.9781,0.9669,2.7304) and (26.9781,-0.8434,2.9508) .. (26.9781,-2.2351,2.4381)
 					.. controls (26.9781,-2.8203,0.8545) and (26.9781,-2.6455,-0.6466) .. (26.9781,-2.6336,-2.4076);
 					
 					\draw[rayvis] (0,0,0) -- (4.9812,0,0);
 					\draw[rayocc] (4.9812,0,0) -- (8.8230,0,0);
 					\draw[rayvis] (8.8230,0,0) -- (10.4834,0,0);
 					\draw[rayocc] (10.4834,0,0) -- (14.1949,0,0);
 					\draw[rayvis] (14.1949,0,0) -- (15.9856,0,0);
 					\draw[rayocc] (15.9856,0,0) -- (19.5505,0,0);
 					\draw[rayvis] (19.5505,0,0) -- (21.4877,0,0);
 					\draw[rayocc] (21.4877,0,0) -- (25.1829,0,0);
 					\draw[rayvis] (25.1829,0,0) -- (26.9898,0,0);
 					\draw[rayocc] (26.9898,0,0) -- (30.7013,0,0);
 					\draw[raytip, rayvis] (30.7013,0,0) -- (32.5571,0,0);
 					\fill[black] (0,0,0) circle (3.668pt);
 					
 					\node[anchor=north west, inner sep=3.057pt] (origin) at (0,0,0) {};
 					
 					\fill[outward] (4.9676,0,0) circle (2.446pt);
 					\fill[inward] (10.4702,0,0) circle (2.446pt);
 					\fill[outward] (15.9729,0,0) circle (2.446pt);
 					\fill[inward] (21.4755,0,0) circle (2.446pt);
 					\fill[inward] (26.9781,0,0) circle (2.446pt);
 					
 					\draw[normaltip, outward] (4.9676,0,0) -- (5.8847,0,0);
 					\draw[normaltip, inward] (10.4702,0,0) -- (9.5531,0,0);
 					\draw[normaltip, outward] (15.9729,0,0) -- (16.8900,0,0);
 					\draw[normaltip, inward] (21.4755,0,0) -- (20.5584,0,0);
 					\draw[normaltip, inward] (26.9781,0,0) -- (26.0610,0,0);

 				\end{tikzpicture}
 			}
 			
 		\end{minipage}
 		\par}
 	\caption{A ray exiting a room and intersecting an image \(S\) equipped with a normal direction field. There are exactly 5 intersections. There are 3 inward-pointing intersections and 2 outward-pointing intersections, so the wrapping number of that room is equal to 1.}
 	\label{fig:ray intersection}
 \end{figure}
 
 \begin{prop}[Third a priori obstruction]  	\label{prop:transversal slice}
 If \(S\) is fillable, then the transverse intersection of \(S\) with any plane \(P\) inside 3-space is a fillable immersion of a  closed 1-manifold inside that intersecting plane.
 \end{prop}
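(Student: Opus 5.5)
The plan is to restrict a filling to the preimage of the plane. Let \(F\) be a filling of \(S\) by a compact 3-manifold \(M\), so the restriction of \(F\) to \(\partial M\) parameterizes \(S\). Let \(P\) be a plane meeting \(S\) transversely, meaning transversely to each sheet of \(S\) through every point of \(P \cap S\). Set \(N = F^{-1}\left(P\right)\). The candidate filling of the planar curve \(P \cap S\) is the restriction \(\restr{F}{N} : N \to P\). The key steps are: first, \(N\) is a compact 2-manifold with boundary \(\partial N = N \cap \partial M\); second, \(\restr{F}{N}\) is an immersion into \(P\); third, \(\restr{F}{\partial N}\) parameterizes \(P \cap S\) as an immersed closed 1-manifold.

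For the first step, use that \(F\) is a local diffeomorphism. Near an interior point \(x\) of \(M\), \(F\) is a diffeomorphism onto an open set of 3-space, so \(N\) is locally the preimage of a plane, a smooth surface. Near a boundary point \(x \in \partial M\), choose a chart in which \(M\) is a half-space and \(F\) is a diffeomorphism onto a neighborhood of \(F\left(x\right)\). This neighborhood is bounded by the single sheet of \(S\) that is the image of the local piece of \(\partial M\); the local models of \cref{fig:local pictures in preimage} show that the other sheets through \(F\left(x\right)\) lie in the interior. Transversality of \(P\) to that sheet means \(P\) meets the half-space transversely to its boundary. Hence \(N\) is locally a half-plane with boundary on \(\partial M\), and this is the local-model check that makes \(N\) a 2-manifold with boundary. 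Compactness holds because \(N\) is closed in the compact space \(M\).

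The second step, that \(\restr{F}{N}\) is an immersion into \(P\), is immediate: \(dF\) is an isomorphism carrying \(T_xN\) onto \(T_{F(x)}P\). So \(\restr{F}{N}\) is an equidimensional immersion, indeed a local diffeomorphism onto its image in \(P\).

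For the third step, \(\partial N = F^{-1}\left(P\right) \cap \partial M\). This is the preimage of \(P\) under the immersion \(\restr{F}{\partial M}\), which parameterizes \(S\) and is transverse to \(P\). Hence \(\partial N\) is a closed 1-manifold, and \(\restr{F}{\partial N}\) parameterizes exactly \(P \cap S\).

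The main obstacle I expect is matching the notion of ``fillable immersion of a closed 1-manifold'' inside \(P\). That curve need not be connected, and the definitions earlier in the paper were stated for spheres. I would interpret fillability to mean that there is a compact surface \(N\) and an immersion \(N \to P\) whose boundary restriction parameterizes the given curve. The construction above produces exactly this, possibly with \(N\) disconnected, and \(N\) is compact since \(M\) is. A secondary point is that \(P\) must be transverse to all sheets at multiple points of \(S\). Both multiple points and transverse intersection points are generic, so the hypothesis of transverse intersection guarantees this; if needed, a small perturbation of \(P\) preserves the statement.
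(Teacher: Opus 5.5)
Your proposal is correct and follows the paper's own argument: restrict the filling \(F\) to \(N = F^{-1}\left(P\right)\), use that \(F\) is a local diffeomorphism of a manifold with boundary and that \(P\) is transverse to the sheets of \(S\) to see that \(N\) is a compact surface with boundary \(N \cap \partial M\), and observe that \(\restr{F}{N}\) then fills \(P \cap S\). Your local half-space check at boundary points makes explicit what the paper compresses into ``transversality makes \(F^{-1}\left(P\right)\) a compact 2-manifold,'' and your closing remark about perturbing \(P\) is unnecessary, since transversality is already part of the hypothesis.
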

 \begin{proof}
 	The intersection is an immersed 1-manifold  by transversality and closed because so is the 2-sphere. The inverse image \(F^{-1}\left(P\right)\) of \(P\) under any filling \(F\) of \(S\) is embedded because \(P\) is embedded and \(F\) is a local diffeomorphism of a manifold with boundary. Transversality makes \(F^{-1}\left(P\right)\) a compact 2-manifold with exactly its boundary embedded on the boundary of the domain of \(F\). Then, the restriction to  \(F^{-1}\left(P\right)\) yields a desired filling by a compact 2-manifold.
 \end{proof}

\section{An Assembly of a Kit}

We have now seen the finitistic nature of fillings: each filling of any generically immersed 2-sphere \(S\) in 3-space produces a covering space whose isomorphism class can be selected by a section of the hardware store bundle of \(S\). Let \(F\) be any filling of \(S\) by \(M\). \emph{The kit assembled by the filling \(F\)} is defined to be the subspace \textbf{\(K_F\)} of \(M\) equal to the inverse image \(F^{-1}\left(B_S \setminus S\right)\) of the complement of \(S\) inside its building \(B_S\). \cref{fig:local chamber pictures} portrays how a chamber \(\ol{R}\) can fail to be a manifold, and \cref{fig:resolved chamber pictures} portrays its canonical resolution into a compact, orientable, connected, and smooth 3-manifold \(\wh{R}\) with corners. As a space, this resolution is defined to be \emph{the 1957 \citeauthor{Fox1957} completion} of the room.\footnote{For the sake of concision, the phrase \emph{the Fox completion of a room} will be confounded to mean the domain of the Fox completion of the inclusion spread of a room into its chamber, where no confusion should arise.} By a universal property there is a quotient map from \(\wh{R}\) to \(\ol{R}\) that restricts to a diffeomorphism from the interior of \(\wh{R}\) onto the room \(R\) living as a dense subspace of \(\ol{R}\), making the identifications exactly along the mould lines of \(\ol{R}\). \emph{The complete kit  assembled by the filling \(F\)} is defined to be the domain \(\wh{K_F}\) of the Fox completion of the restriction map of \(F\) to \(K_F\) over \(B_S\). The complete kit assembled by \(F\) is unique up to unique isomorphism. \emph{The gluing of the complete kit assembled by the filling \(F\)} is defined to be the quotient map \(g_F : \wh{K_F} \to M\) obtained via functoriality of the Fox completion.  For a kit \(K\) of \(S\), define \emph{the complete kit  of \(K\)} to be the domain \(\wh{K}\) of the Fox completion of the covering map of \(K\) over the complement of \(S\) inside \(B_S\) that is given by virtue of our definition of a kit.
 
\begin{prop}[How to cut and then glue back together any triangulated space] 	\label{prop:cut and glue}
	Let \(M\) be any triangulated space of dimension \(d\) and \(\Delta^\circ\) the disjoint union of all open \(d\)-simplices of \(M\). If \(\iota : \Delta^\circ \hookrightarrow M\) is the inclusion spread,  then the Fox completion spread \(\wh{\iota} : \wh{\Delta^\circ} \to M\) is the quotient map from the disjoint union of closed \(d\)-simplices \(\wh{\Delta^\circ}\) that is \(\iota\) when restricted along a dense and locally connected inclusion \(\Delta^\circ \hookrightarrow \wh{\Delta^\circ}\).
\end{prop}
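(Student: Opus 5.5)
We will work directly from Fox's definition of the completion of a spread and then identify it concretely. Recall that for a spread \(f : Y \to X\), with \(Y\) locally connected and \(f\) continuous with the relevant basis property, the points of the Fox completion \(\wh{Y}\) lying over \(x \in X\) are the coherent choices: for each open neighborhood \(U\) of \(x\), a component of \(f^{-1}(U)\), compatible under restriction of neighborhoods. The topology on \(\wh{Y}\) is generated by the sets of such points whose chosen component is a given component \(V\) of some \(f^{-1}(U)\). The plan is to compute these coherent choices for \(\iota : \Delta^\circ \hookrightarrow M\), match them bijectively with the points of the disjoint union of closed \(d\)-simplices, and then check that the topologies and maps agree.

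The first step is local. Fix \(x \in M\) lying in the interior of some simplex \(\sigma\) of the triangulation, of dimension at most \(d\). Take \(x\) to have a basis of star-like neighborhoods \(U\) inside its open star. For each such \(U\), the intersection \(U \cap \Delta^\circ\) decomposes as a disjoint union, indexed by the top-dimensional simplices \(\tau \supseteq \sigma\), of sets \(U \cap \tau^\circ\). Each of these is connected, because it is a convex-like region in a simplex near a face point, and these are precisely the components of \(\iota^{-1}(U)\). Consequently, a coherent choice over \(x\) is the same as a choice of one \(d\)-simplex \(\tau\) containing \(x\). Now observe that the preimages of \(x\) under the evident map \(q : \bigsqcup_\tau \tau \to M\) are exactly one point in each closed simplex \(\tau \ni x\), counted with multiplicity when \(\tau\) has self-identified faces in a non-simplicial triangulation; in that case one replaces \(\tau\) by the characteristic-map copies. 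This gives a bijection \(\wh{\Delta^\circ} \cong \bigsqcup_\tau \tau\) over \(M\), and this bijection is the identity on \(\Delta^\circ\).

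The second step is to check that this bijection is a homeomorphism. Under the bijection, the basic open set attached to the component \(U \cap \tau^\circ\) corresponds to \(q^{-1}(U) \cap \tau\), which is open in the closed simplex \(\tau\). Conversely, these sets form a basis of \(\tau\) near each of its points. It follows at once that \(\Delta^\circ\) is dense in \(\wh{\Delta^\circ}\) and that it is locally connected. The completion map \(\wh{\iota}\) corresponds to \(q\). Finally, \(q\) is a quotient map because it is a continuous surjection from a compact space, namely a finite union of simplices (in the general case, from the disjoint union of cells defining the weak topology of \(M\)), onto the Hausdorff space \(M\); and \(M\) carries the quotient topology by the definition of a triangulation.

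The main obstacle is the connectivity claim in the first step, together with verifying that the star neighborhoods are cofinal. This is delicate when a top simplex meets \(x\) along several faces, which happens for \(\Delta\)-complex or non-simplicial triangulations. In that situation a single \(\tau\) contributes several components of \(U \cap \tau^\circ\), one for each preimage of \(x\) in \(\tau\). I would handle this by working throughout with characteristic maps \(\chi_\tau : \Delta^d \to M\). In that setting the components of \(\iota^{-1}(U)\), for \(U\) small, are the images of small relatively convex neighborhoods in \(\Delta^d\) of the individual points of \(\chi_\tau^{-1}(x)\), and the preimage count above then holds with this multiplicity.
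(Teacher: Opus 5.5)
Your proposal is correct in substance and follows the route the paper intends: the paper's proof only says the result is an exercise in unwinding Fox's definitions. Your identification of the Fox threads over \(x\) with the closed \(d\)-simplices containing \(x\) (or with the points of \(\chi_\tau^{-1}(x)\) in the \(\Delta\)-complex case), followed by matching Fox's basic open sets with open subsets of those simplices, is exactly that exercise carried out. One caveat, inherited from the statement itself: surjectivity of \(q\), and hence the quotient-map claim, needs every simplex of \(M\) to be a face of some \(d\)-simplex (otherwise the open star of such a simplex misses \(\Delta^\circ\), so neither \(q\) nor \(\widehat{\iota}\) reaches its interior points), and for infinite complexes the compactness argument should be replaced by the observation that, for such a pure complex, the weak topology is already coherent with the family of closed \(d\)-simplices.
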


\begin{proof}
	The proof is an exercise in understanding the definitions provided by \cite{Fox1957}. The definitions used here are recalled below for the convenience of the reader.
\end{proof}

\begin{figure}[h!] 
	{\centering
		\begin{minipage}{\textwidth}
			\centering
			
			
			{\footnotesize
				\begin{tabular}{@{}c@{\hspace{0.3in}}c@{\hspace{0.3in}}c@{}}
					\shortstack{\textbf{Resolved}\\\textbf{Edge Point II}} & \shortstack{\textbf{Resolved}\\\textbf{Corner Point II}} & \shortstack{\textbf{Resolved}\\\textbf{Corner Point III}}
					\\[1.5mm]
					
					\(\vcenter{\hbox{\begin{tikzpicture}[cubepic2, line cap=round]
								\cubecoords{}{(0.070711,0,-0.070711)}    
								\cubecoords{2}{(-1.070711,0,1.070711)}   
								
								\fill[chamberorange] (A) -- (B) -- (E) -- (C) -- cycle;
								\fill[sheetorange]   (A) -- (C) -- (G) -- (D) -- cycle;
								\fill[chamberorange] (C) -- (E) -- (H) -- (G) -- cycle;
								\fill[chamberorange] (A) -- (B) -- (F) -- (D) -- cycle;
								\fill[chamberorange] (B) -- (E) -- (H) -- (F) -- cycle;
								\draw[hidden] (A) -- (C);
								\draw[hidden] (C) -- (E);
								\draw[hidden] (C) -- (G);
								\fill[sheetorange]   (D) -- (F) -- (H) -- (G) -- cycle;
								
								\fill[sheetorange]   (A2) -- (B2) -- (E2) -- (C2) -- cycle;
								\fill[chamberorange] (A2) -- (C2) -- (G2) -- (D2) -- cycle;
								\fill[chamberorange] (C2) -- (E2) -- (H2) -- (G2) -- cycle;
								\fill[chamberorange] (A2) -- (B2) -- (F2) -- (D2) -- cycle;
								\draw[hidden] (A2) -- (C2);
								\draw[hidden] (C2) -- (E2);
								\draw[hidden] (C2) -- (G2);
								\fill[sheetorange]   (B2) -- (E2) -- (H2) -- (F2) -- cycle;
								\fill[chamberorange] (D2) -- (F2) -- (H2) -- (G2) -- cycle;
								
								\draw[edge] (A) -- (B) -- (E) -- (H) -- (G);
								\draw[edge] (D) -- (A);
								\draw[edge] (B) -- (F) -- (D);
								\draw[edge] (F) -- (H);
								
								\draw[edge] (A2) -- (B2);
								\draw[edge] (E2) -- (H2) -- (G2) -- (D2) -- (A2);
								\draw[edge] (B2) -- (F2) -- (D2);
								\draw[edge] (F2) -- (H2);
								
								\draw[glued] (G) -- (D);   
								\draw[glued] (B2) -- (E2); 
								
								\markpointas{0.070711,0.5,0.929289}{-1}{0.25mm}{\(p''\)}
								\markpointas{-0.070711,0.5,1.070711}{90}{0.25mm}{\(p'\)}
					\end{tikzpicture}}}\)
					&
					\(\vcenter{\hbox{\begin{tikzpicture}[cubepic2, line cap=round]
								\cubecoords{2}{(-1,0.070711,1.070711)}   
								\cubecoords{3}{(-1,-1.070711,-0.070711)} 
								
								\draw[hidden] (C2) -- (G2);
								\fill[sheetorange]   (A2) -- (B2) -- (E2) -- (C2) -- cycle;
								\fill[chamberorange] (A2) -- (C2) -- (G2) -- (D2) -- cycle;
								\fill[chamberorange] (C2) -- (E2) -- (H2) -- (G2) -- cycle;
								\fill[sheetorange]   (A2) -- (B2) -- (F2) -- (D2) -- cycle;
								\fill[sheetorange]   (B2) -- (E2) -- (H2) -- (F2) -- cycle;
								\fill[chamberorange] (D2) -- (F2) -- (H2) -- (G2) -- cycle;
								
								\draw[hidden] (C3) -- (E3);
								\draw[hidden] (C3) -- (G3);
								\fill[chamberorange] (A3) -- (B3) -- (E3) -- (C3) -- cycle;
								\fill[chamberorange] (A3) -- (C3) -- (G3) -- (D3) -- cycle;
								\fill[sheetorange]   (C3) -- (E3) -- (H3) -- (G3) -- cycle;
								\fill[chamberorange] (A3) -- (B3) -- (F3) -- (D3) -- cycle;
								\fill[sheetorange]   (B3) -- (E3) -- (H3) -- (F3) -- cycle;
								\fill[sheetorange]   (D3) -- (F3) -- (H3) -- (G3) -- cycle;
								
								\draw[edge] (B2) -- (E2) -- (H2) -- (G2) -- (D2) -- (A2);
								\draw[edge] (B2) -- (F2) -- (D2);
								\draw[edge] (F2) -- (H2);
								\draw[occluded] (A2) -- (C2);
								\draw[occluded] (C2) -- (E2);
								
								\draw[edge] (E3) -- (H3);
								\draw[edge] (D3) -- (A3) -- (B3) -- (E3);
								\draw[edge] (G3) -- (D3);
								\draw[edge] (B3) -- (F3) -- (D3);
								\draw[edge] (F3) -- (H3);
								\draw[hidden] (A3) -- (C3);
								
								\draw[glued] (A2) -- (B2); 
								\draw[glued] (G3) -- (H3); 
								
								\markpointas{0,0.070711,1.070711}{135}{-0.9mm}{\(p'\)}
								\markpointas{0,-0.070711,0.929289}{267}{0.4mm}{\(p''\)}
					\end{tikzpicture}}}\)
					&
					\(\vcenter{\hbox{\begin{tikzpicture}[cubepic2, line cap=round]
								\cubecoords{}{(0.141421,0.070711,-0.070711)}    
								\cubecoords{2}{(-1.070711,0.070711,1.141421)}   
								\cubecoords{3}{(-1.070711,-1.141421,-0.070711)} 
								
								\draw[hidden] (C) -- (E);
								\fill[chamberorange] (A) -- (B) -- (E) -- (C) -- cycle;
								\fill[sheetorange]   (A) -- (C) -- (G) -- (D) -- cycle;
								\fill[chamberorange] (C) -- (E) -- (H) -- (G) -- cycle;
								\fill[sheetorange]   (A) -- (B) -- (F) -- (D) -- cycle;
								\fill[chamberorange] (B) -- (E) -- (H) -- (F) -- cycle;
								\fill[sheetorange]   (D) -- (F) -- (H) -- (G) -- cycle;
								
								\draw[hidden] (C2) -- (G2);
								\fill[sheetorange]   (A2) -- (B2) -- (E2) -- (C2) -- cycle;
								\fill[chamberorange] (A2) -- (C2) -- (G2) -- (D2) -- cycle;
								\fill[chamberorange] (C2) -- (E2) -- (H2) -- (G2) -- cycle;
								\fill[sheetorange]   (A2) -- (B2) -- (F2) -- (D2) -- cycle;
								\fill[sheetorange]   (B2) -- (E2) -- (H2) -- (F2) -- cycle;
								\fill[chamberorange] (D2) -- (F2) -- (H2) -- (G2) -- cycle;
								
								\draw[hidden] (C3) -- (E3);
								\draw[hidden] (C3) -- (G3);
								\fill[chamberorange] (A3) -- (B3) -- (E3) -- (C3) -- cycle;
								\fill[chamberorange] (A3) -- (C3) -- (G3) -- (D3) -- cycle;
								\fill[sheetorange]   (C3) -- (E3) -- (H3) -- (G3) -- cycle;
								\fill[chamberorange] (A3) -- (B3) -- (F3) -- (D3) -- cycle;
								\fill[sheetorange]   (B3) -- (E3) -- (H3) -- (F3) -- cycle;
								\fill[sheetorange]   (D3) -- (F3) -- (H3) -- (G3) -- cycle;
								
								\draw[edge] (A) -- (B) -- (E) -- (H) -- (G);
								\draw[edge] (B) -- (F) -- (D);
								\draw[edge] (F) -- (H);
								\draw[occluded] (A) -- (C);
								\draw[occluded] (C) -- (G);
								
								\draw[edge] (E2) -- (H2) -- (G2) -- (D2) -- (A2);
								\draw[edge] (B2) -- (F2) -- (D2);
								\draw[edge] (F2) -- (H2);
								\draw[occluded] (A2) -- (C2);
								\draw[occluded] (C2) -- (E2);
								
								\draw[edge] (D3) -- (A3) -- (B3) -- (E3);
								\draw[edge] (G3) -- (D3);
								\draw[edge] (B3) -- (F3) -- (D3);
								\draw[edge] (F3) -- (H3);
								\draw[hidden] (A3) -- (C3);
								
								\draw[glued] (G) -- (D);   
								\draw[glued] (D) -- (A);   
								\draw[glued] (B2) -- (E2); 
								\draw[glued] (A2) -- (B2); 
								\draw[glued] (E3) -- (H3); 
								\draw[glued] (G3) -- (H3); 
								
								\markpointas{0.141421,0.070711,0.929289}{-45}{0.1mm}{\(p'''\)}
								\markpointas{-0.070711,0.070711,1.141421}{135}{-0.9mm}{\(p'\)}
								\markpointas{-0.070711,-0.141421,0.929289}{267}{0.4mm}{\(p''\)}
					\end{tikzpicture}}}\)
				\end{tabular}
			}
			
		\end{minipage}
		\par}
	\caption{The resolution into a compact, orientable, connected, and smooth 3-manifold \(\wh{R}\) with corners obtained via the Fox completion of a room \(R\) of \(S\). The darker orange portrays the walls, while the lighter orange portrays the rooms.}
		\label{fig:resolved chamber pictures}
\end{figure}


 Ralph \cite{Fox1957} studied the categories of spreads and complete spreads.\footnote{Consult the book by \cite{BungeFunk2006} for a more complete categorical understanding of the Fox completion.} \emph{A spread} \(g : X \to Y\) is a map of locally connected \(T_1\)-spaces such that the components of the inverse images of the open sets of \(Y\) form a basis of \(X\). A morphism of spreads is a commutative square. Fox proves that if \(X\) is spread over \(Y\) and \(Y\) is regular, then \(X\) is regular. Since \(X\) and \(Y\) are in particular Kolmogorov spaces by definition, \(Y\) being regular implies that both \(X\) and \(Y\) are Hausdorff. 

To a complete spread, \citeauthor{Fox1957} provides the following refined definition. A spread \(g: X \to Y\) will be said to be \emph{complete} if for every point \(y\) of \(Y\) the following condition is satisfied: If to every open neighborhood \(V\) of \(y\) there is selected a connected component \(U\) of \(g^{-1}\left(V\right)\) in such a way that \(U_1 \subset U_2\) whenever \(V_1 \subset V_2\), then \(\cap_V U\) is non-vacuous (and is therefore a point). A spread \(\wh{g} : \wh{X} \to Y\) is called \emph{a completion of the spread} \(g : X \to Y\) if \(\wh{g}\) is complete and there exists an embedding \(\iota\) of \(X\) into \(\wh{X}\) such that the image \(\iota\left(X\right)\) is dense and locally connected in \(\wh{X}\)\footnote{A subspace \(X\) is \emph{locally connected in \(Y\)} if there exists a basis \(\mathscr{B}\) of \(Y\) such that \(U \cap X\) is connected in \(X\) for every \(U\) in \(\mathscr{B}\).} and the resulting triangle diagram is commutative.

For any spread \(g : X \to Y\), \citeauthor{Fox1957}  constructs a completion \(\wh{g} : \wh{X} \to Y\). Let \(y\) be any point of \(Y\). A point \(x\) of \(\wh{X}\) in the fiber of \(\wh{g}\) above \(y\) is a function that assigns to each open neighborhood \(V\) of \(y\) a connected component \(x\left(V\right)\) of \(g^{-1}\left(V\right)\) in such a way that \(x\left(V_1\right)\) is contained in \(x\left(V_2\right)\) whenever \(V_1\) is contained in \(V_2\). This defines simultaneously the set \(\wh{X}\) and the function \(\wh{g}\). Given an open set \(V\) of \(Y\) and any connected component \(U\) of \(g^{-1}\left(V\right)\), define the open set \(U/V\) of \(\wh{X}\) to be the set of those points of \(\wh{X}\) for which \(x\left(V\right) = U\).

This construction gives a covariant functor going from the category of spreads to the category of complete spreads, which we call \emph{the Fox completion}. Restricted to spreads over a fixed space \(Y\), with morphisms covering the identity on \(Y\), this functor is left adjoint to the forgetful functor from complete spreads into spreads that just forgets to say the adjective `complete.' One need not understand the Fox completion in this categorical way to understand the properties used here. We use the adjunction only to note that the Fox completion is unique up to unique isomorphism, which justifies the written hat notation.

 Any filling \(F\) of \(S\) by  \(M\) is a complete spread over the building \(B_S\) of \(S\). The interior \(M^\circ\) of \(M\) is a dense and locally connected subspace of \(M\), making \(F\) the Fox completion of the restriction spread from \(M^\circ\) to \(B_S\). The kit  \(K_F\) assembled by \(F\) is included in \(M^\circ\) as a dense subspace and covers the identity over \(B_S\), so functoriality of the Fox completion gives \emph{the gluing \(g_F\) of \(\wh{K_F}\)}. This is the unique map \(g_F : \wh{K_F} \to M\) such that a dense and locally connected inclusion \(K_F \hookrightarrow \wh{K_F}\) forms a commutative square with \(F\) and \(g_F\).\footnote{Use this dense and locally connected inclusion to define \emph{a piece} and \emph{a part} of a complete kit in line with \S 3.} 


\begin{lem} 	\label{lem:assemblage}
	Let \(F\) be any filling of \(S\). Then the complete kit \(\wh{K_F}\) assembled by \(F\) is a compact space whose pieces are each an orientable and smooth 3-manifold with corners and whose parts are each a finite covering space of degree \(w\left(R\right)\) over the Fox completion of a room \(R\) of \(S\). Furthermore, the gluing \(g_F\) of \(\wh{K_F}\) is a surjective immersion such that its restriction to the interior of \(\wh{K_F}\) is a diffeomorphism with dense image equal to \(K_F\); hence, \(g_F\) is a quotient map that only identifies points along the boundary of \(\wh{K_F}\).
\end{lem}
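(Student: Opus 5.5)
The plan is to work locally over the building \(B_S\), using that the Fox completion is computed fiberwise from connected components of inverse images of small neighborhoods. By \cref{cor:filling to kit}, \(K_F = F^{-1}\left(B_S\setminus S\right)\) is a disjoint union over rooms \(R\) in \(\mathfrak{R}_S^{\neq 0}\) of the covering spaces \(F_R : F^{-1}\left(R\right)\to R\), each of degree \(w\left(R\right)\) by \cref{lem:wrapping numbers}. Since the Fox completion of a disjoint union of spreads over \(B_S\) is the disjoint union of the completions, I will treat one room at a time. The key observation is that \(F_R\) factors as the covering \(F^{-1}\left(R\right)\to R\) followed by the inclusion spread \(R\hookrightarrow \ol{R}\hookrightarrow B_S\). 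I then aim to show that the completion of this composite is a degree \(w\left(R\right)\) covering of \(\wh{R}\), the Fox completion of \(R\) portrayed in \cref{fig:resolved chamber pictures}.

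For the local computation, take a point \(y\) of \(\ol{R}\) and a small convex neighborhood \(V\) of one of the model types in \cref{fig:local chamber pictures}. Then \(V\cap R\) splits into finitely many connected components, each a contractible sector (an octant, quarter-space or half-space). Points of \(\wh{R}\) over \(y\) correspond to these local sectors, and this recovers the local models of \cref{fig:resolved chamber pictures}, which are manifolds with corners. Because each sector is contractible, the covering \(F_R\) is trivial over it, so its preimage consists of exactly \(w\left(R\right)\) sheets. Points of \(\wh{K_F}\) over \(y\) are then compatible systems of sheets over shrinking sectors. This shows that \(\wh{K_F}\to \wh{R}\) is locally a trivial \(w\left(R\right)\)-sheeted covering, so it is a finite covering. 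Each piece inherits the structure of a smooth manifold with corners from \(\wh{R}\), and orientability comes from pulling back the orientation of 3-space. Compactness follows because \(\wh{R}\) is compact (it is the completion of a room with compact closure in the compact set \(B_S\)) and a finite cover of a compact space is compact.

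For the gluing, I will use the explicit fiber description. A point \(x\) of \(\wh{K_F}\) over \(y\) is a coherent choice of components \(x\left(V\right)\) of \(F^{-1}\left(V\right)\cap K_F\). Each such component lies in a unique component of \(F^{-1}\left(V\right)\) in \(M\), because \(F\) is a complete spread over \(B_S\) with \(M^\circ\) dense and locally connected. The map \(g_F\) sends \(x\) to the intersection of these components, which is a point since \(F\) is complete. I will then check smoothness and injectivity of the derivative in local models. Near a preimage point \(m\) in \(M\), \(F\) is a diffeomorphism onto a neighborhood \(V\) in one of the models of \cref{fig:building}. In this chart, the sectors of \(K_F\) near \(m\) are exactly the regions cut out by the sheets of \(F^{-1}\left(S\right)\), and \(g_F\) restricted to each closed sector is the inclusion of a closed octant, quarter-space or half-space. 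Hence \(g_F\) is an immersion of manifolds with corners, and the commutative square \(F\circ g_F = \wh{F_R}\) makes this consistent.

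On the interior of \(\wh{K_F}\), every point lies over \(B_S\setminus S\), where the completion adds nothing, so \(g_F\) is the identity onto \(K_F\). The set \(K_F\) is dense in \(M\) because \(F^{-1}\left(S\right)\) is a finite union of immersed hypersurfaces. Surjectivity follows because every \(m\) in \(F^{-1}\left(S\right)\) is a limit of adjacent sectors and completeness supplies a point over it. Since \(\wh{K_F}\) is compact and \(M\) is Hausdorff, \(g_F\) is a closed map and therefore a quotient map, and all its identifications occur over \(F^{-1}\left(S\right)\), that is, along \(\partial \wh{K_F}\).

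The main obstacle I expect is the local identification of \(\wh{R}\) at mould lines, specifically Edge II and Corner II and III. There, several sectors of \(V\cap R\) meet at a single point of \(\ol{R}\), and I must confirm that the Fox points really separate them, giving the pulled-apart pictures rather than a pinched point. I would handle this by choosing a cofinal family of neighborhoods \(V\) that are small cubes adapted to the coordinate hyperplanes, and counting components of \(V\cap R\) directly.
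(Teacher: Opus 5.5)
Your proposal is correct, but it follows a more hands-on, local route than the paper.

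\textbf{The paper's route.} The paper argues softly, step by step:
\begin{enumerate}[(1)]
\item compactness of \(\wh{K_F}\) is inherited from compactness of \(M\) by an argument of Fox;
\item Hausdorffness comes from regularity of \(B_S\) and finiteness of fibers;
\item the covering property over \(\wh{R}\) comes from the general fact that a proper local homeomorphism with Hausdorff domain is a covering, with the degree taken from \cref{lem:wrapping numbers};
\item the manifold-with-corners structure of \(\wh{R}\) is asserted and then transported;
\item the immersion property is a two-out-of-three argument in the commutative square relating \(g_F\), \(F\), and the completed spread \(\wh{K_F} \to B_S\).
\end{enumerate}
Surjectivity and the quotient property are left implicit.

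\textbf{Your route.} You compute the completion sector by sector in convex chart cubes and trivialize the covering over each convex sector. This gives local triviality of each completed part over \(\wh{R}\) without invoking properness. You get compactness from \(\wh{R}\) rather than from \(M\). Your chart check of the immersion property is the two-out-of-three argument written in coordinates. You also supply the compact-to-Hausdorff argument that makes \(g_F\) a quotient map.

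\textbf{What each buys.} Your route actually verifies the point the paper takes for granted, which you correctly flag as the crux. Every sector \(U\) of a convex cube \(V\) is convex, so each point of \(\ol{U}\cap V\) carries exactly one Fox point in \(U/V\). Hence \(U/V\) is a closed sector, and the mould lines really are pulled apart rather than pinched. The paper's route buys brevity and independence from the particular shape of the local models.

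\textbf{One caveat.} Compactness of \(\wh{R}\) does not follow merely from \(R\) having compact closure in \(B_S\). A Fox completion of a relatively compact open set can fail to be compact when infinitely many local components accumulate. It holds here because your cube computation shows that \(\wh{R} \to \ol{R}\) is locally a finite disjoint union of closed sectors, hence proper. You should cite that, not the compactness of \(\ol{R}\) alone.
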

\begin{proof}
Compactness of \(\wh{K_F}\) follows by an argument of \citeauthor{Fox1957} since the domain of \(F\) is assumed compact. The building of \(S\) is regular and the fibers of \(F\) are finite, so \(\wh{K_F}\) is Hausdorff by earlier discussion. That \(\wh{K_F}\) is second-countable follows from the finite degree covering space picture.  The gluing \(g_F\) is a diffeomorphism with dense image when restricted to the interior of \(\wh{K_F}\) since \(K_F\) is included as the interior of \(\wh{K_F}\), and the inclusion forms a commutative square with \(F\) and \(g_F\). Any part of \(\wh{K_F}\) is a covering space over the Fox completion of some room because \(\wh{K_F}\) is Hausdorff and the Fox completion yields a proper local homeomorphism with degree following from \cref{lem:wrapping numbers}. The Fox completion of any room admits the structure of an orientable and smooth 3-manifold with corners, which one may transport via the covering map obtained to bestow that structure upon \(\wh{K_F}\). The immersion property follows because two out of three sides of the commutative triangle to which \(g_F\) belongs are immersions.
\end{proof}


Generated is the picture that the domain of each filling is some assemblage of finitely many compact covering spaces, adhered together along their boundaries. The next few statements show that this gluing depends only on the local model and wrapping numbers around a given point. See \cref{fig:local pictures in preimage,fig:building} for definitions and portrayals of the local models referenced.

\begin{lem} 	\label{lem:preimage number}
	Let \(F\) be any filling of \(S\) and \(g_F\) the gluing of the complete kit \(\wh{K_F}\) assembled by \(F\). Then the function \(\# g_F^{-1}\) sending a point \(p\) in the domain of \(F\) to the cardinality of its fiber \(g_F^{-1}\left(p\right)\) equals
	\begin{align*}
		\# g_F^{-1}\left( p\right) = \begin{cases} 
			1 & \text{if } p \text{ is a preimage room point or a preimage boundary single point.} \\
			2 & \text{if } p \text{ is a preimage interior single point or a preimage boundary double point.} \\
			4 & \text{if } p \text{ is a preimage interior double point or a preimage boundary triple point.}\\
			8 & \text{if } p \text{ is a preimage interior triple point}.
		\end{cases}
	\end{align*}
\end{lem}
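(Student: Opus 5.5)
The plan is to compute the fiber \(g_F^{-1}\left(p\right)\) directly from the Fox construction, one point at a time. The key tool is the description of points of \(\wh{K_F}\) over \(F(p)\) as compatible choices of components. Since \(F\) is a local diffeomorphism, I would first choose a small connected open neighborhood \(W\) of \(p\) in \(M\) on which \(F\) is a diffeomorphism onto a local model \(V\) from \cref{fig:building}. Neighborhoods of this form are cofinal, so the bookkeeping can be done inside this single model.

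By \cref{lem:assemblage}, \(g_F\) is a diffeomorphism from the interior of \(\wh{K_F}\) onto \(K_F\) and commutes with the inclusion \(K_F \hookrightarrow \wh{K_F}\). Unwinding the Fox completion of \(K_F \to M\) (equivalently, of \(K_F \to B_S\) composed through \(F\)), a point of \(g_F^{-1}\left(p\right)\) is a coherent assignment, to each small neighborhood \(W' \subset W\) of \(p\), of a connected component of \(W' \cap K_F\). For the model neighborhoods these components stabilize. So the key step is:
\[
\# g_F^{-1}\left(p\right) = \#\pi_0\left(W \cap K_F\right)
\]
for \(W\) a sufficiently small model neighborhood. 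I would justify this with the completeness condition in Fox's definition together with the Hausdorff property of \(\wh{K_F}\) from \cref{lem:assemblage}. In this setting every component of \(W \cap K_F\) has \(p\) in its closure, and shrinking \(W\) neither merges nor splits components, since the model is conical.

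The rest is a case count using \cref{fig:local pictures in preimage}. Here \(W \cap K_F = W \setminus F^{-1}\left(S\right)\), and \(F^{-1}\left(S\right) \cap W\) is locally a union of hyperplanes, with some of them lying on \(\partial M\). In \(\rl^3\), an interior point lying on \(k\) transverse coordinate planes has a complement with \(2^k\) components. This gives \(1, 2, 4, 8\) for room points and interior single, double and triple points.

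For a boundary point, one of the \(k\) planes is \(\partial M\) itself, and only one side of it lies in \(M\). The count becomes \(2^{k-1}\): that is, \(1, 2, 4\) for boundary single, double and triple points. This matches the stated cases. The local models themselves come from transporting the models of \cref{fig:local pictures} and \cref{fig:building} along \(F\), as already discussed after \cref{cor:filling to kit}.

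I expect the main obstacle to be the stabilization step, namely identifying the fiber of the completion with \(\pi_0\left(W \cap K_F\right)\). Two facts need care here. First, every coherent choice of components is represented by a point, which is the completeness of \(F\) restricted appropriately. Second, distinct components really give distinct points of \(\wh{K_F}\) over \(p\), rather than being identified. I would try first to argue this via the basic open sets \(U/V\): distinct components \(U_1, U_2\) of \(g^{-1}(V)\) give disjoint open sets \(U_1/V\) and \(U_2/V\), so the corresponding points are separated. Conversely, the conical structure of the model forces every coherent choice to be eventually constant, which gives surjectivity onto the set of components.
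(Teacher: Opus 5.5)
Your proposal is correct and follows the paper's proof. Both arguments take a small local model neighborhood of \(p\), identify points of \(g_F^{-1}(p)\) with nested choices of components of that neighborhood minus \(F^{-1}(S)\), and count those components from \cref{fig:local pictures in preimage}; your explicit \(2^k\) versus \(2^{k-1}\) count (with \(\partial M\) as one of the \(k\) planes) spells out what the paper states in one line, and the two ``facts needing care'' you flag are automatic, since points of \(\wh{K_F}\) are by definition the coherent choices, so the only real content is that \(\pi_0\) stabilizes as the conical model neighborhood shrinks.
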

\begin{proof}
	Any point \(p\) in the domain of \(F\) has a local model neighborhood \(U\) that is homeomorphic to some model from \cref{fig:local pictures in preimage}. The intersection \(U \cap K_F\) has a number of connected components equal to the claimed value of \(\# g_F^{-1}\left( p\right)\). Any point in \(g_F^{-1}\left(p\right)\) is a function that assigns to \(U\) one of the connected components of \(U \cap K_F\) in a way that satisfies a nesting property. This nesting property implies that \(\# g_F^{-1}\left( p\right)\) is equal to the claimed value.
\end{proof}

\begin{lem}	\label{lem:filling to building}
	Let \(F\) be any filling of \(S\) and \(p\) any point in the building \(B_S\) of \(S\). If \(x\) is the minimum wrapping number of all chambers containing \(p\), then the fiber \(F^{-1}\left(p\right)\) above \(p\) consists of 
	\begin{align*}
		F^{-1}\left(p\right) = \begin{cases} 
			x \text{ preimage room points} & \text{if } p \text{ is a building room point.} \\
			x \text{ preimage interior plus } 1 \text{ preimage boundary single point} & \text{if } p \text{ is a building single point.} \\
			x \text{ preimage interior plus } 2 \text{ preimage boundary double points} & \text{if } p \text{ is a building double point.}\\
			x \text{ preimage interior plus } 3 \text{ preimage boundary triple points} & \text{if } p \text{ is a building triple point.}
		\end{cases}
	\end{align*}
\end{lem}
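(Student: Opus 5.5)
We first reduce to a local count near \(p\) and then use the covering-space description of \(F\) over rooms from \cref{lem:wrapping numbers}. Since \(F\) is a local diffeomorphism from a compact manifold with boundary, the fiber \(F^{-1}\left(p\right)\) is finite. Each point \(q\) of the fiber has a neighborhood mapped diffeomorphically onto a neighborhood of \(p\). By \cref{fig:local pictures in preimage}, \(q\) is either an interior point or a boundary point of \(M\). If \(q\) is a boundary point, then \(F\) carries a half-space neighborhood of \(q\) onto a region bounded by exactly one sheet of \(S\) through \(p\). By the definition of a filling, \(F\) restricted to \(\partial M\) parameterizes \(S\) bijectively onto its generic image. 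So the boundary points of the fiber correspond exactly to the sheets of \(S\) through \(p\). There are \(0,1,2,3\) such sheets according as \(p\) is a room, single, double, or triple point. This accounts for the boundary contributions, and the local type of each boundary preimage (single, double, or triple) is read off by pulling back the other sheets through \(p\).

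It remains to show that exactly \(x\) fiber points are interior points. Choose a small local model neighborhood \(V\) of \(p\), as in \cref{fig:building}, and let \(W\) be the local chamber of \(V\setminus S\) whose wrapping number equals the minimum \(x\). The paper notes that this chamber is unique. Take a point \(e\in W\) close to \(p\). By \cref{lem:wrapping numbers}, \(\#F^{-1}\left(e\right)=x\).

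For each fiber point \(q\), pick a local sheet of \(F\) near \(q\). Its image is \(V\) if \(q\) is interior. If \(q\) lies on the boundary, its image is the closed half-region on the inward side of the corresponding sheet of \(S\). The inward normals point toward increasing wrapping number, by the wall-crossing formula in the proof of \cref{lem:wrapping numbers}. Hence the local minimum chamber \(W\) lies on the outward side of every sheet through \(p\), so no boundary sheet covers \(e\). Every interior sheet, on the other hand, contains \(e\).

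Conversely, consider any preimage of \(e\) close to the fiber. Properness and compactness of \(M\) let us shrink \(V\) so that \(F^{-1}\left(V\right)\) is a disjoint union of the local sheets around the points of \(F^{-1}\left(p\right)\). Then every preimage of \(e\) lies in one of these sheets, and by the previous paragraph it must be an interior sheet. So the interior fiber points are in bijection with \(F^{-1}\left(e\right)\), and there are exactly \(x\) of them.

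The main obstacle is the shrinking step: we need every preimage of a point near \(p\) to lie near \(F^{-1}\left(p\right)\). This follows from compactness of \(M\). If points \(e_n\to p\) had preimages \(m_n\) staying away from \(F^{-1}\left(p\right)\), then a subsequence of the \(m_n\) would converge to some \(m\) with \(F\left(m\right)=p\), a contradiction. The other delicate point is checking that \(W\) lies on the outward side of all sheets, and this follows from the local models together with the rule that wrapping numbers increase by one in the inward direction.
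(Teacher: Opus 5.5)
Your proof is correct, but it runs dual to the paper's argument. The paper uses the local room of \emph{maximum} wrapping number \(x+k\) at \(p\), where \(k\) is the number of sheets through \(p\). That room lies on the inward side of every sheet, so every local sheet of \(F\) at a fiber point, interior or boundary, covers it. This gives \(\#F^{-1}(p)=x+k\). Injectivity of \(F\) on \(\partial M\) along single points then says exactly \(k\) of these are boundary points, and the \(x\) interior points follow by subtraction.

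You instead use the local room of \emph{minimum} wrapping number \(x\). It lies on the outward side of every sheet, so only interior sheets cover it, and you count the interior points directly. Your boundary count comes separately from the parameterization.

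What each route buys: yours makes the interior count independent of any information about \(\partial M\). You also spell out the compactness step that the paper leaves tacit, namely that all preimages of points near \(p\) lie in the local sheets. The paper's route gets the total cardinality \(x+k\) in one stroke. Combining the two counts would recover the boundary count \(k\) from \cref{lem:wrapping numbers} without a second appeal to the parameterization.

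One wording correction: \(F|_{\partial M}\) is not bijective onto \(S\), since double and triple points have two and three preimages. What you actually use, as the paper does, is that each local sheet through \(p\) has exactly one preimage in \(\partial M\).
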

\begin{proof}
	The statement for room points follows because \(F\) is a covering map of degree equal to the only wrapping number nearby. In the remaining cases, look at the chamber which contributes the unique maximum wrapping number of all chambers containing \(p\) to see that the cardinality of \(F^{-1}\left(p\right)\) equals \(x + k\), where \(k\) equals one, two, or three depending on whether \(p\) is a single, double, or triple point respectively.  Since the local models in \cref{fig:local pictures in preimage} must be transported into the local models in \cref{fig:building} via any filling, and since \(F\) when restricted to the boundary 2-sphere is injective along its single points, exactly \(x\) points of \(F^{-1}\left(p\right)\) are interior points. The remaining \(k\) points are determined by the image of the boundary 2-sphere.
\end{proof}

\begin{cor}	\label{cor:count}
	Let \(F\) be any filling of \(S\) and \(g_F\) the gluing of the complete kit assembled by \(F\). Let \(F_g\) equal the composition  \(F \circ g_F\), regarded as a map to the building \(B_S\) of \(S\). If \(x\) is the minimum wrapping number of all chambers containing a point \(p\) in \(B_S\), then the function \(\#F_g^{-1}\) sending \(p\) to the cardinality of its fiber \(F_g^{-1}\left(p\right)\) equals	
	\begin{align*}
		\# F_g^{-1}(p) = \begin{cases} 
			x & \text{if } p \text{ is a building room point.} \\
			2x + 1 & \text{if } p \text{ is a building single point.} \\
			4x + 4 & \text{if } p \text{ is a building double point.}\\
			8x + 12 & \text{if } p \text{ is a building triple point}.
		\end{cases}
	\end{align*}
\end{cor}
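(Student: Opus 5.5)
The count follows by decomposing the fiber of \(F_g = F \circ g_F\) over \(p\) as a disjoint union over the points of \(F^{-1}\left(p\right)\):
\begin{align*}
\# F_g^{-1}\left(p\right) = \sum_{q \in F^{-1}\left(p\right)} \# g_F^{-1}\left(q\right).
\end{align*}
\cref{lem:filling to building} tells us which types of points \(q\) occur in \(F^{-1}\left(p\right)\), and with what multiplicities. \cref{lem:preimage number} gives \(\# g_F^{-1}\left(q\right)\) for each type. It then remains only to match the local model of each \(q\) with the appropriate case of \cref{lem:preimage number} and add.

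The cases go as follows. If \(p\) is a building room point, then \(F^{-1}\left(p\right)\) consists of \(x\) preimage room points, each with one preimage under \(g_F\), giving \(x\). If \(p\) is a building single point, then there are \(x\) preimage interior single points contributing \(2\) each and one preimage boundary single point contributing \(1\), giving \(2x+1\). If \(p\) is a building double point, then there are \(x\) preimage interior double points contributing \(4\) each and two preimage boundary double points contributing \(2\) each, giving \(4x+4\). If \(p\) is a building triple point, then there are \(x\) preimage interior triple points contributing \(8\) each and three preimage boundary triple points contributing \(4\) each, giving \(8x+12\).

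The main thing needing justification is the step glossed in \cref{lem:filling to building}: that the \(x\) interior preimages of a building single, double, or triple point are \emph{interior} points of exactly the same stratum type (preimage interior single, double, or triple). Likewise, the boundary preimages must be of the matching boundary type (boundary single, double, or triple), rather than some mixture of types.

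I would argue this from \(F\) being a local diffeomorphism. Each \(q \in F^{-1}\left(p\right)\) has a neighborhood mapped isomorphically onto a neighborhood of \(p\) in \(B_S\). That neighborhood carries the local picture of \(S\) at \(p\), with the same number of sheets through \(p\). So the model of \cref{fig:local pictures in preimage} at \(q\) has the same singularity order as \(p\). The model is of interior type exactly when \(q\) lies in the interior of \(M\), and of boundary type otherwise.

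Granting this, the corollary is a direct summation and presents no further obstacle.
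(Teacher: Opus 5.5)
Your proposal is correct and is essentially the paper's own argument: the corollary has no separate proof because it follows from summing \(\#g_F^{-1}\left(q\right)\) from \cref{lem:preimage number} over the fiber \(F^{-1}\left(p\right)\) described in \cref{lem:filling to building}, and your totals \(x\), \(2x+1\), \(4x+4\), \(8x+12\) are right. Your point that the preimages have the matching stratum type because \(F\) is a local diffeomorphism is the same transport-of-local-models observation the paper uses in proving \cref{lem:filling to building}.
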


The Fox completion thus distinguishes, with multiplicity given by the wrapping number, the local connected components of the rooms around any point. Define \emph{a local room of \(S\)} to be a connected component of the intersection of a sufficiently small ball in 3-space with the complement of \(S\) inside its own building. The local rooms at each point \(p\) on \(S\) are partially ordered by inclusion.  Define \emph{the set of local rooms at \(p\)} to be the finite set \(\mathfrak{R}_p\) of maximal chains of this partial ordering.  Given a filling \(F\) of \(S\), with \(F_g\) as just defined in \cref{cor:count}, there is, for each local room \(r\) in \(\mathfrak{R}_p\), a canonical subset \(r^F\) of the fiber \(F_g^{-1}\left(p\right)\) above \(p\).  Call this canonical subset \(r^F\) \emph{the fiber above the local room \(r\)}, and define \(r^F\) to be the intersection of the closure of the inverse image of any representative of \(r\) with \(F_g^{-1}\left(p\right)\). The collection of all fibers above the local rooms at \(p\) forms a partition of \(F_g^{-1}\left(p\right)\). If a local room \(r\) has wrapping number \(w\left(r\right)\), then the cardinality \(\#r^F\) of the fiber above the local room is equal to \(w\left(r\right)\). Two local rooms at \(p\) are said to be \emph{adjacent} if the closures of any of their representatives meet along a wall of \(S\).  Adjacency endows \(\mathfrak{R}_p\) with a relation \(\rightarrowtriangle\) where \(r \rightarrowtriangle r'\) if \(r\) is adjacent to \(r'\) and \( w\left(r\right) + 1 = w\left(r'\right)\).

\begin{prop}	\label{prop:injection}
	Let \(F\) be any filling of \(S\) by \(M\) and \(g_F\) the gluing of the complete kit assembled by \(F\). If two local rooms \(r\) and \(r'\) at a point \(p\) on \(S\) satisfy \(r \rightarrowtriangle r'\),  then there is a unique injection \(r^F \hookrightarrow r'^F\) between the fibers above the local rooms that belongs to a commutative triangle of injections.
	\[\begin{tikzcd}
	{r^F} &&&& {r'^F} \\
	\\
	\\
	&& M
	\arrow[hook, from=1-1, to=1-5]
	\arrow["{g_F}"', hook, from=1-1, to=4-3]
	\arrow["{g_F}", hook, from=1-5, to=4-3]
\end{tikzcd}\]
\end{prop}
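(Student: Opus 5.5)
The plan is to show that the composite \(g_F\) is already injective on \(r^F\) and on \(r'^F\), and that \(g_F(r^F)\) lies inside \(g_F(r'^F)\). Once both facts are in hand, the injection \(r^F \hookrightarrow r'^F\) is \(\left(g_F|_{r'^F}\right)^{-1}\circ g_F|_{r^F}\). Uniqueness is then automatic, because any map making the triangle commute must agree with this composite when the right leg is injective. So everything reduces to two local claims, both checked in a small chart around \(p\) using the local models of \cref{fig:local pictures in preimage,fig:building}.

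\textbf{Step 1 (injectivity of \(g_F\) on each \(r^F\)).} Fix a small ball \(V\) about \(p\) and a representative \(\rho\) of \(r\). The fiber \(r^F\) consists of points of \(\wh{K_F}\) over \(p\) lying in the closure of the part of \(\wh{K_F}\) over \(\rho\). By the covering picture of \cref{lem:assemblage}, the inverse image of \(\rho\) in \(K_F\) is a disjoint union of \(w(r)\) sheets. Each sheet completes to a single point over \(p\), giving \(\#r^F=w(r)\).

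Now use \(F\), a local diffeomorphism. The preimage \(F^{-1}(V)\) near \(F^{-1}(p)\) is a disjoint union of model neighborhoods \(U_i\), one for each point of \(F^{-1}(p)\). Each \(U_i\) is a local model of \cref{fig:local pictures in preimage} and embeds into \(V\). Since \(U_i\) embeds, at most one component of \(U_i\cap K_F\) lies over \(\rho\). Hence distinct sheets over \(\rho\) land in distinct \(U_i\), and so have distinct images under \(g_F\). This gives injectivity of \(g_F\) on \(r^F\).

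\textbf{Step 2 (containment \(g_F(r^F)\subset g_F(r'^F)\)).} Let \(U_i\) contain a sheet over \(\rho\). I must show that \(U_i\) also contains a sheet over a representative \(\rho'\) of \(r'\). The hypothesis \(r\rightarrowtriangle r'\) means \(\rho\) and \(\rho'\) share a wall \(W\), and the wrapping number increases by one crossing \(W\) from \(\rho\) to \(\rho'\). By the wall-crossing formula in the proof of \cref{lem:wrapping numbers}, a generic wall point \(q\in W\) near \(p\) has the following fiber: the sheets over \(\rho\) continue across \(q\) into sheets over \(\rho'\) as interior points, and exactly one extra sheet over \(\rho'\) begins at a boundary point over \(q\). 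So every sheet over \(\rho\) is continued, inside the same embedded chart, by a sheet over \(\rho'\).

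Letting \(q\to p\) along \(W\) and using the nesting description of Fox points in \cref{lem:preimage number}, the two sheets complete to points over \(p\). These points have the same image in \(M\), namely the center of \(U_i\). This proves the containment. The remaining point of \(r'^F\), the one not hit, is exactly the sheet emanating from the boundary of \(M\), matching \(\#r'^F=\#r^F+1\).

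\textbf{Main obstacle.} The delicate step is Step 2 at double and triple points. There \(U_i\) may be a preimage boundary double or triple point, and one must check that the continuation across \(W\) stays in the same \(U_i\), rather than passing to a different preimage point of \(p\). I would handle this by listing the finitely many models of \cref{fig:building} and reading off, via \cref{lem:filling to building}, which local rooms each \(U_i\) meets. The key fact is that each \(U_i\) meets a set of local rooms that is closed upward under \(\rightarrowtriangle\): an interior model meets all local rooms, and a boundary model meets all rooms on the positive side of its sheets. This upward closure is exactly the containment needed.
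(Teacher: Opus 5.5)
Your proposal is correct and follows the paper's route; it is a fleshed-out version of the paper's two-line proof. For existence, you use that the Fox completion separates the sheets over distinct local rooms, together with the local-model lemmas (\cref{lem:preimage number,lem:filling to building}); for uniqueness, you use the categorical fact that \(g_F\) restricted to \(r'^F\) is a monomorphism. The obstacle you flag in Step 2 also dissolves without a case check: the continuation of a \(\rho\)-sheet across \(W\) passes through an interior preimage of a wall point \(q\), and a small connected neighborhood of that preimage lies in \(F^{-1}(V)\) and meets \(U_i\), so it lies in the same component \(U_i\).
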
 
\begin{proof}
	To prove existence, one combines the picture of the Fox completion simply distinguishing the local rooms around any point on \(S\) with the preceding lemmas, while uniqueness follows categorically.
\end{proof}

\begin{defn}[The gluing category of a kit] 	\label{def:glue category}
	Let \(K\) be  a kit of any \(S\) and \(p\) a point on \(S\). Define \emph{the gluing category \(\mathscr{G}_p\left(K\right)\) of \(K\) at \(p\)} to be the category whose objects are the analogously defined \emph{fibers above the local rooms at \(p\) in the complete kit of \(K\)}. The morphisms are injections between the fibers above the local rooms that cover the natural wrapping number relation \(\rightarrowtriangle\)  on the local rooms at \(p\). In general \(\rightarrowtriangle\) is given by the inequality between the absolute values of the wrapping numbers of adjacent local rooms.
\end{defn}

Given any filling \(F\) of \(S\), the gluing geometry around all room points is understood since that is where the gluing \(g_F\) of the complete kit \(\wh{K_F}\) assembled by \(F\) is a diffeomorphism. Around each point \(p\) on \(S\) there is the finite set \(\mathfrak{R}_p\) of local rooms, and above  \(\mathfrak{R}_p\), in \(\wh{K_F}\), the finite sets of fibers above those local rooms. There are unique injections forming commutative diagrams between these fibers above the local rooms that cover the natural wrapping number relation \(\rightarrowtriangle\) on \(\mathfrak{R}_p\) below.  \cref{fig:instruction manual picture} portrays the shape that these diagrams make, as well as the cardinalities of the fibers above the local rooms. The previous proposition gives, for each point \(p\) on \(S\), a diagram \(\mathfrak{D}_F\left(p\right)\) in \(\mathscr{G}_p\left(K_F\right)\), which we call \emph{the assembly of \(F\) at \(p\)}.

\begin{thm}	\label{thm:push-out}
	Let \(F\) be any filling of any generically immersed 2-sphere \(S\) in 3-space  and \(\mathscr{G}_p\) the gluing category of the kit \(K_F\) assembled by \(F\) at a point \(p\) on \(S\). Then the assembly  of \(F\) at \(p\) is
	\begin{align*}
		\mathfrak{D}_F(p) = \begin{cases} 
			\text{a morphism in } \mathscr{G}_p & \text{if } p \text{ is a single point.} \\
			\text{a push-out square in } \mathscr{G}_p & \text{if } p \text{ is a double point.} \\
			\text{a push-out cube in } \mathscr{G}_p & \text{if } p \text{ is a triple point.}
		\end{cases}
	\end{align*}
\end{thm}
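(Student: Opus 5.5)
The plan is to reduce the statement to a local computation around \(p\). We combine the local models of \cref{fig:building}, the fiber counts of \cref{lem:preimage number,lem:filling to building,cor:count}, and the injections of \cref{prop:injection}. Fix \(p\) on \(S\) and let \(x\) be the minimum wrapping number of the chambers containing \(p\). By the local models, the local rooms at \(p\) are indexed by the orthants cut out by the \(k\) sheets through \(p\), where \(k\in\{1,2,3\}\). Their wrapping numbers are \(x+\abs{A}\), where \(A\subseteq\{1,\dots,k\}\) records on which side of each sheet the orthant lies, oriented by the inward-pointing normal. The relation \(\rightarrowtriangle\) is exactly the inclusion \(A\subset A\cup\{i\}\). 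So the diagram \(\mathfrak{D}_F(p)\) furnished by \cref{prop:injection} has the shape of the Boolean lattice of subsets of \(\{1,\dots,k\}\): a single arrow, a square, or a cube. By the uniqueness clause of \cref{prop:injection}, every square face commutes, since both composites are the unique injection compatible with \(g_F\) into \(M\).

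For \(k=1\) there is nothing more to prove. For \(k\geq 2\), I would show that \(\mathfrak{D}_F(p)\) is a colimit (push-out) in \(\mathscr{G}_p\). I would first identify it as a colimit in finite sets with injections, and then observe that \(\mathscr{G}_p\) is a full subcategory of this, so the universal property persists. Identify each fiber \(r_A^F\) with its image under \(g_F\), a subset of \(F^{-1}(p)\subseteq M\) (the maps are injective by \cref{prop:injection}). Then \(r_A^F\) consists of the \(x\) interior preimage points together with the boundary preimage points lying on those sheets \(i\in A\) whose preimage in \(M\) locally has the orthant \(A\) on its inner side. More precisely, by \cref{lem:filling to building}, each sheet \(i\) has exactly one boundary preimage \(b_i\) in \(M\), and \(b_i\) lies in the closure of \(F^{-1}(r_A)\) exactly when \(i\in A\). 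Hence \(g_F(r_A^F)=\{\text{interior}\}\sqcup\{b_i : i\in A\}\), of cardinality \(x+\abs{A}\), matching \cref{cor:count}. The diagram is then the diagram of subsets \(X\sqcup\{b_i\}_{i\in A}\) under inclusion.

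For a square, the push-out of \(X\sqcup\{b_1\}\leftarrow X\rightarrow X\sqcup\{b_2\}\) in sets is \(X\sqcup\{b_1,b_2\}\). This is precisely the top corner, via the given inclusions. For the cube, the colimit of the punctured cube is the union \(X\sqcup\{b_1,b_2,b_3\}\), again the top corner. One also checks the iterated push-out condition on each face, which follows from the square case applied to each face with \(X\) enlarged.

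The main obstacle is that the universal property must hold within \(\mathscr{G}_p\). There the morphisms are only injections between fibers covering \(\rightarrowtriangle\), so a cocone into an object must be induced by an injection. I would handle this by noting that any cocone from the span into \(r_{\{1,2\}}^F\) consists of injections agreeing on \(r_\emptyset^F\). The two images of \(b_1\) and \(b_2\) must be distinct and lie outside the image of \(X\), since the map from \(r_{\{i\}}^F\) is injective. So the induced map from the set push-out is again injective, and it is unique. Verifying the claimed description of \(r_A^F\) carefully at boundary double and triple points, using the Fox completion points as nested choices of components as in \cref{lem:preimage number}, is where the real care is needed.
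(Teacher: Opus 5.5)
Your main line is correct, but it takes a different route from the paper. The paper never computes the fibers. It takes commutativity from \cref{prop:injection}, and then notes by counting (cardinalities \(x\), \(x+1\), \(x+1\), \(x+2\)) that a commuting square is a push-out exactly when its two terminal arrows are jointly surjective. It obtains joint surjectivity by contradiction: an uncovered point of the terminal fiber would sit at a corner of \(\wh{K_F}\) to which no other piece is adhered, producing a corner of \(M\). Cubes are then handled face by face.

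You instead push everything into \(M\) by \(g_F\) and compute the diagram outright. By \cref{lem:filling to building}, \(F^{-1}(p)\) is the set \(X\) of \(x\) interior points together with one boundary point \(b_i\) per sheet. The half-space model at \(b_i\), combined with the sign convention of \cref{lem:wrapping numbers} (the inward normal at \(b_i\) points to the side of sheet \(i\) with larger wrapping number), gives \(g_F(r_A^F)=X\sqcup\{b_i : i\in A\}\). This is right, and it buys an explicit normal form: \(\mathfrak{D}_F(p)\) is the Boolean lattice of these subsets under inclusion. The push-out property then reduces to the fact that two subsets meeting in \(X\) have union the top set. The paper's argument is shorter and isolates its one geometric input (no corners in \(M\)); in your version that input is buried in the local models behind \cref{lem:filling to building}.

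Your final paragraph, however, contains a false step and should be dropped. Take a commuting pair of injections \(f_1, f_2\) from \(r_{\{1\}}^F\) and \(r_{\{2\}}^F\) into \(r_{\{1,2\}}^F\). Injectivity only keeps \(f_i(b_i)\) out of \(f_1(X)=f_2(X)\); it does not force \(f_1(b_1)\neq f_2(b_2)\). For example, let \(f_1\) be the inclusion and \(f_2\) the identity on \(X\) with \(b_2\mapsto b_1\). The only comparison map out of \(X\sqcup\{b_1,b_2\}\) then sends \(b_1\) and \(b_2\) to the same point, so it is not an injection. Thus the square is not a push-out relative to cocones of injections. Separately, \(\mathscr{G}_p\) is not a full subcategory of finite sets, since its morphisms only cover \(\rightarrowtriangle\).

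So ``push-out'' must be read as in the paper's proof: the underlying square of finite sets is a push-out, equivalently the terminal arrows are jointly surjective. Your subset description already proves exactly this, and the cube analogue face by face, so nothing further is needed.
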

\begin{proof}
The previous proposition handles commutativity as well as the case when \(p\) is a single point of \(S\). By counting, a commuting square in \(\mathscr{G}_p\) is a push-out square exactly when the union of the images of its two terminal arrows contains the fiber above the terminal local room. If the terminal arrows were not surjective in some commuting square after a choice of adhesion, then there would exist a point in the fiber above the terminal local room that is on a corner of the assembled complete kit \(\wh{K_F}\). However, no other piece of \(\wh{K_F}\) is adhered there at that corner, thus creating a contradictory corner in \(M\). To conclude, a push-out cube in \(\mathscr{G}_p\) is characterized by all its faces being push-out squares.
\end{proof}

\section{Instructions for How to Assemble a Kit into a Smooth Manifold with Smooth Boundary}

The concept of local rooms in a local model neighborhood was just introduced. Later in this section, we introduce the concept of local walls. Geometrically, the coboundary of a local wall is a pair of adjacent local rooms. The constructions in this section are intrinsically defined from any generically immersed 2-sphere  \(S\) in 3-space. For the problem at hand, \emph{an instruction manual \(E_K\)} is defined here only when \(S\) is positive. For further research purposes, \emph{the assembly sheaf \(\mathscr{A}_S\)} is defined here for any \(S\). 

\begin{defn}[Instructions at a point] 	\label{def:allpushouts}
	Let \(K\) be a kit of any positive \(S\) and \(\mathscr{G}_p\) the gluing category of \(K\) at a point \(p\) on \(S\). Define \emph{the instructions of \(K\) at} \(p\)  to be the set \(E_K(p)\) consisting of
	\begin{align*}
		E_K(p) = \begin{cases} 
			\text{all morphisms in } \mathscr{G}_p & \text{if } p \text{ is a single point.} \\
			\text{all push-out squares in } \mathscr{G}_p & \text{if } p \text{ is a double point.} \\
			\text{all push-out cubes in } \mathscr{G}_p & \text{if } p \text{ is a triple point.}
		\end{cases}
	\end{align*}
	We call an element of the instructions of \(K\) at \(p\) \emph{an adhesion of \(K\) at \(p\)}.
\end{defn}

\begin{lem} 	\label{lem:stalknumbers}
	Let \(K\) be any kit of any positive \(S\). If \(x\) is the minimum wrapping number of all chambers containing a point \(p\) on \(S\), then the function \(\#E_K\) sending \(p\) to the cardinality of the instructions of \(K\) at \(p\) equals
	\begin{align*}
		\# E_K\left(p\right)  = \begin{cases} 
			\left(x+1\right)! & \text{if } p \text{ is a single point.} \\
			\left(\left(x+1\right)!\right)^2\left(x+2\right)! & \text{if } p \text{ is a double point.} \\
			\left(\left(x+1\right)!\right)^3\left(\left(x+2\right)!\right)^3\left(x+3\right)! & \text{if } p \text{ is a triple point.}
		\end{cases}
	\end{align*}
\end{lem}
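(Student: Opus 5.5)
The plan is to count adhesions by building each diagram one arrow at a time, using the shape of the poset \(\mathfrak{R}_p\) with \(\rightarrowtriangle\) together with the fiber cardinalities \(\#r = w\left(r\right)\). By the local models of \cref{fig:building}, the local rooms at \(p\) have the following wrapping numbers.
\begin{itemize}
\item At a single point there are two local rooms, with wrapping numbers \(x\) and \(x+1\).
\item At a double point there are four local rooms, with wrapping numbers \(x, x+1, x+1, x+2\), arranged as a square.
\item At a triple point there are eight local rooms, indexed by the vertices of a cube, with wrapping number \(x+\left|v\right|\), where \(\left|v\right|\) is the number of coordinates of \(v\) equal to \(1\).
\end{itemize}
The arrows \(\rightarrowtriangle\) are the cube edges increasing \(\left|v\right|\). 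The basic count is that the number of injections from an \(m\)-set into an \(\left(m+1\right)\)-set is \(\left(m+1\right)!\). The single point case is then immediate: \(E_K\left(p\right)\) is the set of injections from an \(x\)-set into an \(\left(x+1\right)\)-set, which has \(\left(x+1\right)!\) elements.

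For a double point, label the fibers \(A\) of size \(x\), \(B\) and \(C\) of size \(x+1\), and \(D\) of size \(x+2\), with arrows \(a\colon A\to B\), \(b\colon A\to C\), \(c\colon B\to D\) and \(d\colon C\to D\).
\begin{itemize}
\item Choose \(a\) and \(b\) freely, in \(\left(\left(x+1\right)!\right)^2\) ways.
\item Choose \(c\) freely, in \(\left(x+2\right)!\) ways.
\item Commutativity forces \(d\) on the \(x\)-element image \(b\left(A\right)\), namely \(d = c\circ a\circ b^{-1}\) there. This lands inside \(c\left(B\right)\) and is injective.
\item The push-out criterion from the proof of \cref{thm:push-out} requires \(c\left(B\right)\cup d\left(C\right)=D\). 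Since \(c\left(B\right)\) misses exactly one point of \(D\), the unique remaining point of \(C\setminus b\left(A\right)\) must map to that missing point.
\end{itemize}
Thus \(d\) exists and is unique. It is injective and makes the square a push-out, so the count is \(\left(\left(x+1\right)!\right)^2\left(x+2\right)!\). I would record this as a lemma: given the two bottom arrows and one top arrow of a square, there is exactly one completion to a push-out square.

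For a triple point I would proceed level by level.
\begin{itemize}
\item Choose the three arrows out of the bottom fiber freely, giving \(\left(\left(x+1\right)!\right)^3\).
\item Each of the three level-two fibers receives two arrows, lying in one of the three bottom faces of the cube. For each such face, choose one of its upper arrows freely and complete the other by the square lemma, giving \(\left(\left(x+2\right)!\right)^3\).
\item For the top fiber \(T\) of size \(x+3\), choose one incoming arrow \(P\to T\) freely, giving \(\left(x+3\right)!\).
\item Complete the second incoming arrow \(Q\to T\) by the square lemma applied to the upper face containing \(P\) and \(Q\).
\end{itemize}
Finally, the third arrow \(R\to T\) is forced on the two images in \(R\) of the level-one fibers lying below it in the two remaining upper faces. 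Those images cover \(R\), because the bottom face through \(R\) is already a push-out of an \(x\)-set with two \(\left(x+1\right)\)-sets into an \(\left(x+2\right)\)-set. So \(R\to T\) is determined everywhere, provided the two prescriptions agree on the overlap of the images.

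The main obstacle is exactly this last step. I must check that the two prescriptions for \(R\to T\) agree on the overlap, which is the image of the bottom fiber. I must also check that the resulting map is injective and that both remaining upper faces are push-outs. Agreement on the overlap follows by chasing the cube from the bottom fiber, since every route to \(T\) is already fixed by commutativity of the faces built so far. For injectivity and the push-out property I would count elements. Inside \(T\), the images of the three level-two fibers each miss one point. The push-out conditions on the first upper face force these three missing points to be distinct, and then a pigeonhole count on the sizes \(x+2\) versus \(x+3\) gives injectivity and surjectivity of the unions. Since a push-out cube is characterized by all faces being push-out squares, this shows every adhesion arises exactly once, giving \(\left(\left(x+1\right)!\right)^3\left(\left(x+2\right)!\right)^3\left(x+3\right)!\).
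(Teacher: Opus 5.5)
Your argument is correct, but it is organized differently from the paper's. The paper's proof is a one-line observation that \(E_K\left(p\right)\) is a torsor for \(\mathcal{S}_{x+1}\), for \(\mathcal{S}_{x+1}\times\mathcal{S}_{x+1}\times\mathcal{S}_{x+2}\), or for \(\mathcal{S}_{x+1}^{3}\times\mathcal{S}_{x+2}^{3}\times\mathcal{S}_{x+3}\). The cardinality is then read off as the group order, without spelling out the action or checking that it is free and transitive.

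You instead give an explicit bijection between adhesions and free data. The free data consist of one incoming injection for each local room other than the bottom one, which is a spanning tree of the square or cube. Every other arrow is forced, either by your square lemma or by commutativity.

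The two arguments match factor by factor: each non-bottom local room with fiber of size \(x+k\) contributes \(\left(x+k\right)!\) either way. In fact, transitivity of a natural version of the paper's action needs exactly your uniqueness-of-completion lemma. One such action lets \(\mathcal{S}_{x+k}\) relabel the fiber of size \(x+k\), adjusting the arrows in and out. Nonemptiness needs your existence check for the third top arrow. So your route makes the count self-contained. The paper's torsor phrasing is the structure that is reused in \cref{prop:assemblysheaftorsor}, and your bijection upgrades to it without difficulty.

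One small tightening. In the last step, do not say the image of \(R\to T\) misses one point before you know \(R\to T\) is injective. Instead compute that image directly: it is the union of the images in \(T\) of the two level-one fibers below \(R\). Each of these is the image of the bottom fiber plus one extra point. The two extra points are distinct because each is the unique point missed by the other incoming arrow of the first upper face; this is that face's push-out condition. So the image has \(x+2\) elements, which gives injectivity. The push-out conditions on the two remaining upper faces then follow immediately.
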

\begin{proof}
	For any positive integer \(x\), let \(\mathcal{S}_x\) denote the symmetric group on \(x\) elements. Observe that if \(p\) is a single point, then \(E_K\left(p\right)\) is a torsor for \(\mathcal{S}_{x+1}\); if \(p\) is a double point, then \(E_K\left(p\right)\) is a torsor for \(\mathcal{S}_{x+1} \times \mathcal{S}_{x+1} \times \mathcal{S}_{x+2}\); and if \(p\) is a triple point, then \(E_K\left(p\right)\) is a torsor for \(\mathcal{S}_{x+1} \times \mathcal{S}_{x+1} \times \mathcal{S}_{x+1} \times \mathcal{S}_{x+2} \times \mathcal{S}_{x+2} \times \mathcal{S}_{x+2} \times \mathcal{S}_{x+3}\).
\end{proof}


Define \(E_K = \cup_p E_K\left(p\right)\) to be \emph{the instruction manual of \(K\)}, which by definition comes equipped with a canonical surjection \(\phi_K : E_K \to S\), which we call \emph{the instructions for \(K\)}. The topology on \(E_K\) can be understood geometrically via \cref{fig:instruction manual picture}. To see small open neighborhoods of an adhesion in \(E_K\), notice the six half-edges coming into any triple point intersecting the center of each of the six faces of a little push-out cube that links the triple point.  Push each face of the cube diagram off to the half-edge which intersects its center.  Do similarly for each edge injection arrow piercing the single locus of \(S\) transversely. This picture of a basic open set makes it immediate that \(\phi_K\) is a local homeomorphism onto \(S\). Before the topology of \(E_K\) is defined set-theoretically, we define the maximal atlas of local models for \(S\).

 \emph{A local model for \(S\)} is a smooth coordinate chart of 3-space \(\left(U, \varphi\right)\) such that the restriction \(\restr{\varphi}{U \cap S}\) to the intersection is a homeomorphism onto some model with defining equation from \cref{fig:local pictures}.  \emph{An atlas of local models for \(S\)} is a collection of local models for \(S\) whose union of \emph{local model neighborhoods} contain \(S\). \emph{A maximal atlas of local models for \(S\)} is an atlas of local models for \(S\) that is not properly contained within another. Since an atlas of local models for \(S\) is by definition a subset of the smooth structure on 3-space, there is a unique \emph{maximum model atlas for \(S\)}, which we denote by \(\mathfrak{M}_S\).
 
 \begin{figure}[h!]
 	{\centering
 		\begin{minipage}{\textwidth}
 			\centering
 			
 			\begin{tabular}{@{}c@{\hspace{0.3in}}c@{\hspace{0.3in}}c@{}}
 				\textbf{Single Point Instructions} & \textbf{Double Point Instructions} & \textbf{Triple Point Instructions}
 				\\[1.5mm]
 				\(\vcenter{\hbox{\scalebox{\stalkScale}{%
 							\begin{tikzpicture}[stalkproj]
 								
 								\coordinate (ymzm) at (0.707107,-0.707107,-1.000000);
 								\coordinate (ypzm) at (-0.707107,0.707107,-1.000000);
 								\coordinate (ypzp) at (-0.707107,0.707107,1.000000);
 								\coordinate (ymzp) at (0.707107,-0.707107,1.000000);
 								
 								\fill[stalkface] (ymzm) -- (ypzm) -- (ypzp) -- (ymzp) -- cycle;
 								\draw[stalkedge] (ypzm) -- (ypzp);
 								
 								\coordinate (Npt) at (-0.509117,-0.509117,0.000000);
 								\coordinate (Np1pt) at (0.601041,0.601041,0.000000);
 								\coordinate (normalpt) at (0.070711,0.070711,0.000000);
 								\coordinate (Opt) at (0.000000,0.000000,0.000000);
 								\coordinate (xlab) at (-0.466690,-0.466690,0.000000);
 								\coordinate (xp1lab) at (0.662913,0.662913,0.000000);
 								\coordinate (arrowaux1) at (0.000000,0.000000,0.000000);
 								\coordinate (arrowaux2) at (0.300279,0.300279,0.000000);
 								
 								\begin{scope}[shift={(canvas cs:x=-1.41346887cm,y=-0.25274103cm)}, rotate=10.0975]
 									\draw[segvis] (-2.4pt,0pt) arc[start angle=180, end angle=90, radius=2.4pt];
 								\end{scope}
 								\draw[segvis, shorten <=11pt] (Npt) -- (arrowaux1); 
 								\draw[segocc] (arrowaux1) -- (arrowaux2); 
 								\draw[bigtip, segvis, shorten >=9pt] (arrowaux2) -- (Np1pt); 
 								
 								\boldstamp[rotate=10.1]{xlab}{x}{black}
 								\boldstamp[rotate=10.1]{xp1lab}{x+1}{black}
 								
 								\draw[stalknormal, shadowredbright] (Opt) -- (normalpt); 
 								
 								\draw[stalkedge] (ypzp) -- (ymzp) -- (ymzm) -- (ypzm);
 								
 								\fill[black] (Opt) circle (1.8pt);
 								\node[font=\Large, black, yshift=-14pt] at (Opt) {p};
 							\end{tikzpicture}
 				}}}\)
 				&
 				\(\vcenter{\hbox{\scalebox{\stalkScale}{%
 							\begin{tikzpicture}[stalkprojskew]
 								
 								
 								\coordinate (xmym) at (-1.000000,-1.000000,0.000000);
 								\coordinate (xmyp) at (-1.000000,1.000000,0.000000);
 								\coordinate (ymzm) at (0.000000,-1.000000,-1.000000);
 								\coordinate (ym) at (0.000000,-1.000000,0.000000);
 								\coordinate (ymzp) at (0.000000,-1.000000,1.000000);
 								\coordinate (ypzm) at (0.000000,1.000000,-1.000000);
 								\coordinate (yp) at (0.000000,1.000000,0.000000);
 								\coordinate (ypzp) at (0.000000,1.000000,1.000000);
 								\coordinate (xpym) at (1.000000,-1.000000,0.000000);
 								\coordinate (xpyp) at (1.000000,1.000000,0.000000);
 								\coordinate (O) at (0.000000,0.000000,0.000000);
 								\coordinate (aux1) at (0.000000,1.000000,-0.500000);
 								\coordinate (aux2) at (-0.894231,1.000000,0.000000);
 								
 								\coordinate (xC) at (-0.750000,0.000000,-0.750000);
 								\coordinate (x1aC) at (0.750000,0.000000,-0.750000);
 								\coordinate (x1bC) at (-0.750000,0.000000,0.750000);
 								\coordinate (x2C) at (0.750000,0.000000,0.750000);
 								
 								\fill[stalkface] (ym) -- (ymzp) -- (ypzp) -- (yp) -- cycle; 
 								\fill[stalkface] (ym) -- (ymzm) -- (ypzm) -- (yp) -- cycle; 
 								\fill[stalkface] (ym) -- (xpym) -- (xpyp) -- (yp) -- cycle; 
 								\fill[stalkface] (ym) -- (xmym) -- (xmyp) -- (yp) -- cycle; 
 								
 								\draw[stalkedge] (ymzp) -- (ypzp);
 								\draw[stalkedge] (ym) -- (yp);
 								\draw[stalkedge] (ymzm) -- (ypzm);
 								\draw[stalkedge] (ypzm) -- (aux1);
 								\draw[stalkedge] (xpym) -- (ym);
 								\draw[stalkedge] (xpyp) -- (yp);
 								\draw[stalkedge] (xmym) -- (xmyp);
 								\draw[stalkedge] (xmyp) -- (aux2);
 								\draw[stalkedge, dashed] (aux1) -- (yp);
 								\draw[stalkedge, dashed] (aux2) -- (yp);
 								
 								\fill[black] (O) circle (1.8pt);
 								\node[font=\Large, black, xshift=12pt, yshift=-5pt] at (O) {p};
 								
 								\coordinate (parrowaux1) at (-0.447116,0.000000,-0.750000);
 								\coordinate (parrowaux2) at (-0.000000,0.000000,-0.750000);
 								\draw[hookstart, segvis, shorten <=9pt] (xC) -- (parrowaux1); 
 								\draw[segocc] (parrowaux1) -- (parrowaux2); 
 								\draw[bigtip, segvis, shorten >=9pt] (parrowaux2) -- (x1aC); 
 								\coordinate (parrowaux3) at (-0.750000,0.000000,-0.250000);
 								\coordinate (parrowaux4) at (-0.750000,0.000000,-0.000000);
 								\draw[hookstart, segvis, shorten <=9pt] (xC) -- (parrowaux3); 
 								\draw[segocc] (parrowaux3) -- (parrowaux4); 
 								\draw[bigtip, segvis, shorten >=9pt] (parrowaux4) -- (x1bC); 
 								\coordinate (parrowaux5) at (0.750000,0.000000,-0.139786);
 								\coordinate (parrowaux6) at (0.750000,0.000000,-0.000000);
 								\draw[hookstart, segvis, shorten <=9pt] (x1aC) -- (parrowaux5); 
 								\draw[segocc] (parrowaux5) -- (parrowaux6); 
 								\draw[bigtip, segvis, shorten >=9pt] (parrowaux6) -- (x2C); 
 								\coordinate (parrowaux7) at (-0.447116,0.000000,0.750000);
 								\coordinate (parrowaux8) at (-0.000000,0.000000,0.750000);
 								\draw[hookstart, segvis, shorten <=9pt] (x1bC) -- (parrowaux7); 
 								\draw[segocc] (parrowaux7) -- (parrowaux8); 
 								\draw[bigtip, segvis, shorten >=9pt] (parrowaux8) -- (x2C); 
 								
 								\draw[stalkedge] (ym) -- (ymzp);
 								
 								\draw[stalkedge] (yp) -- (ypzp);
 								\draw[stalkedge] (ymzm) -- (ym);
 								\draw[stalkedge] (xmym) -- (ym);
 								\draw[stalkedge] (xpym) -- (xpyp);
 								
 								\coordinate (pmid_B) at (0.000000,0.000000,-0.750000);
 								\coordinate (pred_B) at (0.100000,0.000000,-0.750000);
 								\draw[stalknormal, red] (pmid_B) -- (pred_B);
 								\coordinate (pmid_L) at (-0.750000,0.000000,0.000000);
 								\coordinate (pred_L) at (-0.750000,0.000000,0.100000);
 								\draw[stalknormal, red] (pmid_L) -- (pred_L);
 								\coordinate (pmid_R) at (0.750000,0.000000,0.000000);
 								\coordinate (pred_R) at (0.750000,0.000000,0.100000);
 								\draw[stalknormal, red] (pmid_R) -- (pred_R);
 								\coordinate (pmid_T) at (0.000000,0.000000,0.750000);
 								\coordinate (pred_T) at (0.100000,0.000000,0.750000);
 								\draw[stalknormal, red] (pmid_T) -- (pred_T);
 								
 								\coordinate (lab_x) at (-0.750000,0.000000,-0.750000);
 								\boldstamp{lab_x}{x}{black}
 								\coordinate (lab_x+1a) at (0.850000,0.000000,-0.750000);
 								\boldstamp{lab_x+1a}{x+1}{black}
 								\coordinate (lab_x+1b) at (-0.850000,0.000000,0.750000);
 								\boldstamp{lab_x+1b}{x+1}{black}
 								\coordinate (lab_x+2) at (0.850000,0.000000,0.750000);
 								\boldstamp{lab_x+2}{x+2}{black}
 							\end{tikzpicture}
 				}}}\)
 				&
 				\(\vcenter{\hbox{\scalebox{\stalkScale}{%
 							\begin{tikzpicture}[stalkproj]
 								
 								
 								\coordinate (XmYm) at (-1.000000,-1.000000,0.000000);
 								\coordinate (XmZm) at (-1.000000,0.000000,-1.000000);
 								\coordinate (Xm) at (-1.000000,0.000000,0.000000);
 								\coordinate (XmZp) at (-1.000000,0.000000,1.000000);
 								\coordinate (XmYp) at (-1.000000,1.000000,0.000000);
 								\coordinate (YmZm) at (0.000000,-1.000000,-1.000000);
 								\coordinate (Ym) at (0.000000,-1.000000,0.000000);
 								\coordinate (YmZp) at (0.000000,-1.000000,1.000000);
 								\coordinate (Zm) at (0.000000,0.000000,-1.000000);
 								\coordinate (O) at (0.000000,0.000000,0.000000);
 								\coordinate (Zp) at (0.000000,0.000000,1.000000);
 								\coordinate (YpZm) at (0.000000,1.000000,-1.000000);
 								\coordinate (Yp) at (0.000000,1.000000,0.000000);
 								\coordinate (YpZp) at (0.000000,1.000000,1.000000);
 								\coordinate (XpYm) at (1.000000,-1.000000,0.000000);
 								\coordinate (XpZm) at (1.000000,0.000000,-1.000000);
 								\coordinate (Xp) at (1.000000,0.000000,0.000000);
 								\coordinate (XpZp) at (1.000000,0.000000,1.000000);
 								\coordinate (XpYp) at (1.000000,1.000000,0.000000);
 								\coordinate (O) at (0.000000,0.000000,0.000000);
 								\coordinate (aux1) at (0.000000,1.000000,0.750001);
 								\coordinate (aux2) at (0.000000,1.000000,-0.500000);
 								\coordinate (aux3) at (0.000000,0.000000,-0.250000);
 								\coordinate (aux4) at (-0.403847,0.000000,0.000000);
 								\coordinate (aux5) at (-1.000000,0.000000,-0.250000);
 								\coordinate (aux6) at (-0.403847,0.000000,-1.000000);
 								\coordinate (aux7) at (0.596155,1.000000,0.000000);
 								\coordinate (aux8) at (-0.807693,1.000000,0.000000);
 								
 								\coordinate (x_std) at (0.800000,-0.800000,-0.800000);
 								\coordinate (x1_X_std) at (-0.800000,-0.800000,-0.800000);
 								\coordinate (x_std) at (0.800000,-0.800000,-0.800000);
 								\coordinate (x1_Y_std) at (0.800000,0.800000,-0.800000);
 								\coordinate (x_std) at (0.800000,-0.800000,-0.800000);
 								\coordinate (x1_Z_std) at (0.800000,-0.800000,0.800000);
 								\coordinate (x1_X_std) at (-0.800000,-0.800000,-0.800000);
 								\coordinate (x2_XY_std) at (-0.800000,0.800000,-0.800000);
 								\coordinate (x1_X_std) at (-0.800000,-0.800000,-0.800000);
 								\coordinate (x2_XZ_std) at (-0.800000,-0.800000,0.800000);
 								\coordinate (x1_Y_sp) at (0.750000,0.850000,-0.850000);
 								\coordinate (x2_XY_sp) at (-0.850000,0.850000,-0.850000);
 								\coordinate (x1_Y_std) at (0.800000,0.800000,-0.800000);
 								\coordinate (x2_YZ_std) at (0.800000,0.800000,0.800000);
 								\coordinate (x1_Z_std) at (0.800000,-0.800000,0.800000);
 								\coordinate (x2_XZ_std) at (-0.800000,-0.800000,0.800000);
 								\coordinate (x1_Z_std) at (0.800000,-0.800000,0.800000);
 								\coordinate (x2_YZ_std) at (0.800000,0.800000,0.800000);
 								\coordinate (x2_XY_std) at (-0.800000,0.800000,-0.800000);
 								\coordinate (x3_std) at (-0.800000,0.800000,0.800000);
 								\coordinate (x2_XZ_std) at (-0.800000,-0.800000,0.800000);
 								\coordinate (x3_std) at (-0.800000,0.800000,0.800000);
 								\coordinate (x2_YZ_sp) at (0.850000,0.850000,0.850000);
 								\coordinate (x3_sp) at (-0.850000,0.850000,0.850000);
 								
 								\coordinate (lab_x) at (0.800000,-0.800000,-0.800000);
 								\coordinate (lab_x1_X) at (-0.900000,-0.800000,-0.750000);
 								\coordinate (lab_x1_Y) at (0.900000,0.800000,-0.800000);
 								\coordinate (lab_x1_Z) at (0.850000,-0.800000,0.750000);
 								\coordinate (lab_x2_XY) at (-0.900000,0.800000,-0.800000);
 								\coordinate (lab_x2_XZ) at (-0.850000,-0.800000,0.730000);
 								\coordinate (lab_x2_YZ) at (0.940000,0.800000,0.750000);
 								\coordinate (lab_x3) at (-0.880000,0.800000,0.830000);
 								
 								\fill[stalkface] (O) -- (Yp) -- (YpZp) -- (Zp) -- cycle; 
 								\fill[stalkface] (O) -- (Yp) -- (YpZm) -- (Zm) -- cycle; 
 								\fill[stalkface] (O) -- (Ym) -- (YmZp) -- (Zp) -- cycle; 
 								\fill[stalkface] (O) -- (Ym) -- (YmZm) -- (Zm) -- cycle; 
 								\fill[stalkface] (O) -- (Xp) -- (XpZp) -- (Zp) -- cycle; 
 								\fill[stalkface] (O) -- (Xp) -- (XpZm) -- (Zm) -- cycle; 
 								\fill[stalkface] (O) -- (Xm) -- (XmZp) -- (Zp) -- cycle; 
 								\fill[stalkface] (O) -- (Xm) -- (XmZm) -- (Zm) -- cycle; 
 								\fill[stalkface] (O) -- (Xp) -- (XpYp) -- (Yp) -- cycle; 
 								\fill[stalkface] (O) -- (Xp) -- (XpYm) -- (Ym) -- cycle; 
 								\fill[stalkface] (O) -- (Xm) -- (XmYp) -- (Yp) -- cycle; 
 								\fill[stalkface] (O) -- (Xm) -- (XmYm) -- (Ym) -- cycle; 
 								
 								\draw[stalkedge] (aux1) -- (YpZp);
 								\draw[stalkedge] (Zp) -- (YpZp);
 								\draw[stalkedge] (O) -- (Zp);
 								\draw[stalkedge] (Zm) -- (aux3);
 								\draw[stalkedge] (O) -- (Ym);
 								\draw[stalkedge] (Ym) -- (YmZp);
 								\draw[stalkedge] (Zp) -- (YmZp);
 								\draw[stalkedge] (YmZm) -- (Ym);
 								\draw[stalkedge] (Zm) -- (YmZm);
 								\draw[stalkedge] (Xp) -- (O);
 								\draw[stalkedge] (Xp) -- (XpZp);
 								\draw[stalkedge] (XpZp) -- (Zp);
 								\draw[stalkedge] (Xp) -- (XpZm);
 								\draw[stalkedge] (Zm) -- (XpZm);
 								\draw[stalkedge] (Xm) -- (XmZp);
 								\draw[stalkedge] (XmZp) -- (Zp);
 								\draw[stalkedge] (aux5) -- (XmZm);
 								\draw[stalkedge] (aux6) -- (XmZm);
 								\draw[stalkedge] (XpYp) -- (aux7);
 								\draw[stalkedge] (Xp) -- (XpYm);
 								\draw[stalkedge] (Xm) -- (XmYm);
 								\draw[stalkedge, dashed] (O) -- (Yp);
 								\draw[stalkedge, dashed] (Yp) -- (aux1);
 								\draw[stalkedge, dashed] (YpZm) -- (aux2);
 								\draw[stalkedge, dashed] (Zm) -- (YpZm);
 								\draw[stalkedge, dashed] (aux3) -- (O);
 								\draw[stalkedge, dashed] (O) -- (aux4);
 								\draw[stalkedge, dashed] (Xm) -- (aux5);
 								\draw[stalkedge, dashed] (Zm) -- (aux6);
 								\draw[stalkedge, dashed] (aux7) -- (Yp);
 								\draw[stalkedge, dashed] (Xm) -- (XmYp);
 								\draw[stalkedge, dashed] (XmYp) -- (aux8);
 								\draw[stalkedge, loosely dotted] (aux2) -- (Yp);
 								\draw[stalkedge, loosely dotted] (aux8) -- (Yp);
 								
 								\fill[black] (O) circle (1.8pt);
 								\node[font=\Large, black, xshift=9pt, yshift=-7pt] at (O) {p};
 								
 								\draw[{Hooks[left]}-{>[length=4mm,width=3mm]}, segocc3, shorten <=12pt, shorten >=12pt] (x1_Y_sp) -- (x2_XY_sp); 
 								\coordinate (cubearrowaux1) at (-0.800000,0.800000,-0.450000);
 								\coordinate (cubearrowaux2) at (-0.800000,0.800000,-0.000000);
 								\draw[hookstart, segocc3, shorten <=12pt] (x2_XY_std) -- (cubearrowaux1); 
 								\draw[segocc3] (cubearrowaux1) -- (cubearrowaux2); 
 								\draw[bigtip3, segocc3, shorten >=12pt] (cubearrowaux2) -- (x3_std); 
 								\coordinate (cubearrowaux3) at (-0.000000,-0.800000,-0.800000);
 								\coordinate (cubearrowaux4) at (-0.080770,-0.800000,-0.800000);
 								\draw[hookstart, segvis3, shorten <=12pt] (x_std) -- (cubearrowaux3); 
 								\draw[segocc3] (cubearrowaux3) -- (cubearrowaux4); 
 								\draw[bigtip3, segvis3, shorten >=12pt] (cubearrowaux4) -- (x1_X_std); 
 								\coordinate (cubearrowaux5) at (0.800000,0.000000,-0.800000);
 								\draw[hookstart, segvis3, shorten <=12pt] (x_std) -- (cubearrowaux5); 
 								\draw[bigtip3, segocc3, shorten >=12pt] (cubearrowaux5) -- (x1_Y_std); 
 								\coordinate (cubearrowaux6) at (0.800000,-0.800000,-0.050000);
 								\coordinate (cubearrowaux7) at (0.800000,-0.800000,-0.000000);
 								\draw[hookstart, segvis3, shorten <=12pt] (x_std) -- (cubearrowaux6); 
 								\draw[segocc3] (cubearrowaux6) -- (cubearrowaux7); 
 								\draw[bigtip3, segvis3, shorten >=12pt] (cubearrowaux7) -- (x1_Z_std); 
 								\coordinate (cubearrowaux8) at (-0.800000,0.000001,-0.800000);
 								\draw[hookstart, segvis3, shorten <=12pt] (x1_X_std) -- (cubearrowaux8); 
 								\draw[bigtip3, segocc3, shorten >=12pt] (cubearrowaux8) -- (x2_XY_std); 
 								\coordinate (cubearrowaux9) at (-0.800000,-0.800000,-0.050000);
 								\coordinate (cubearrowaux10) at (-0.800000,-0.800000,-0.000000);
 								\draw[hookstart, segvis3, shorten <=12pt] (x1_X_std) -- (cubearrowaux9); 
 								\draw[segocc3] (cubearrowaux9) -- (cubearrowaux10); 
 								\draw[bigtip3, segvis3, shorten >=12pt] (cubearrowaux10) -- (x2_XZ_std); 
 								\coordinate (cubearrowaux11) at (0.800000,0.800000,-0.123810);
 								\coordinate (cubearrowaux12) at (0.800000,0.800000,-0.000000);
 								\draw[hookstart, segvis3, shorten <=12pt] (x1_Y_std) -- (cubearrowaux11); 
 								\draw[segocc3] (cubearrowaux11) -- (cubearrowaux12); 
 								\draw[bigtip3, segvis3, shorten >=12pt] (cubearrowaux12) -- (x2_YZ_std); 
 								\coordinate (cubearrowaux13) at (-0.000000,-0.800000,0.800000);
 								\coordinate (cubearrowaux14) at (-0.080770,-0.800000,0.800000);
 								\draw[hookstart, segvis3, shorten <=12pt] (x1_Z_std) -- (cubearrowaux13); 
 								\draw[segocc3] (cubearrowaux13) -- (cubearrowaux14); 
 								\draw[bigtip3, segvis3, shorten >=12pt] (cubearrowaux14) -- (x2_XZ_std); 
 								\coordinate (cubearrowaux15) at (0.800000,0.000000,0.800000);
 								\draw[hookstart, segvis3, shorten <=12pt] (x1_Z_std) -- (cubearrowaux15); 
 								\draw[bigtip3, segocc3, shorten >=12pt] (cubearrowaux15) -- (x2_YZ_std); 
 								\coordinate (cubearrowaux16) at (-0.800000,0.000000,0.800000);
 								\draw[hookstart, segvis3, shorten <=12pt] (x2_XZ_std) -- (cubearrowaux16); 
 								\draw[bigtip3, segocc3, shorten >=12pt] (cubearrowaux16) -- (x3_std); 
 								\coordinate (cubearrowaux17) at (-0.000000,0.850000,0.850000);
 								\coordinate (cubearrowaux18) at (-0.242309,0.850000,0.850000);
 								\draw[hookstart, segvis3, shorten <=12pt] (x2_YZ_sp) -- (cubearrowaux17); 
 								\draw[segocc3] (cubearrowaux17) -- (cubearrowaux18); 
 								\draw[bigtip3, segvis3, shorten >=12pt] (cubearrowaux18) -- (x3_sp); 
 								
 								\coordinate (splitZm) at (0.000000,0.000000,-0.955000);
 								\draw[stalkedge] (splitZm) -- (aux3);
 								\draw[stalkedge] (Ym) -- (YmZp);
 								\draw[stalkedge] (Xp) -- (XpZp);
 								\draw[stalkedge] (Zp) -- (YpZp);
 								
 								\draw[stalkedge] (aux4) -- (Xm);
 								\draw[stalkedge] (Xp) -- (XpYp);
 								\draw[stalkedge] (XpYm) -- (Ym);
 								\draw[stalkedge] (XmYm) -- (Ym);
 								
 								\coordinate (mid_x_x1_X) at (0.000000,-0.800000,-0.800000);
 								\coordinate (red_x_x1_X) at (-0.080000,-0.800000,-0.800000);
 								\draw[stalknormal, shadowred] (mid_x_x1_X) -- (red_x_x1_X);
 								\coordinate (mid_x_x1_Y) at (0.800000,0.000000,-0.800000);
 								\coordinate (red_x_x1_Y) at (0.800000,0.160000,-0.800000);
 								\draw[stalknormal, shadowred] (mid_x_x1_Y) -- (red_x_x1_Y);
 								\coordinate (mid_x_x1_Z) at (0.800000,-0.800000,0.000000);
 								\coordinate (red_x_x1_Z) at (0.800000,-0.800000,0.080000);
 								\draw[stalknormal, red] (mid_x_x1_Z) -- (red_x_x1_Z);
 								\coordinate (mid_x1_X_x2_XY) at (-0.800000,0.000000,-0.800000);
 								\coordinate (red_x1_X_x2_XY) at (-0.800000,0.160000,-0.800000);
 								\draw[stalknormal, shadowred] (mid_x1_X_x2_XY) -- (red_x1_X_x2_XY);
 								\coordinate (mid_x1_X_x2_XZ) at (-0.800000,-0.800000,0.000000);
 								\coordinate (red_x1_X_x2_XZ) at (-0.800000,-0.800000,0.080000);
 								\draw[stalknormal, red] (mid_x1_X_x2_XZ) -- (red_x1_X_x2_XZ);
 								\coordinate (mid_x1_Y_x2_XY) at (-0.050000,0.850000,-0.850000);
 								\coordinate (red_x1_Y_x2_XY) at (-0.130000,0.850000,-0.850000);
 								\draw[stalknormal, shadowred] (mid_x1_Y_x2_XY) -- (red_x1_Y_x2_XY);
 								\coordinate (mid_x1_Y_x2_YZ) at (0.800000,0.800000,0.000000);
 								\coordinate (red_x1_Y_x2_YZ) at (0.800000,0.800000,0.080000);
 								\draw[stalknormal, red] (mid_x1_Y_x2_YZ) -- (red_x1_Y_x2_YZ);
 								\coordinate (mid_x1_Z_x2_XZ) at (0.000000,-0.800000,0.800000);
 								\coordinate (red_x1_Z_x2_XZ) at (-0.080000,-0.800000,0.800000);
 								\draw[stalknormal, shadowred] (mid_x1_Z_x2_XZ) -- (red_x1_Z_x2_XZ);
 								\coordinate (mid_x1_Z_x2_YZ) at (0.800000,0.000000,0.800000);
 								\coordinate (red_x1_Z_x2_YZ) at (0.800000,0.160000,0.800000);
 								\draw[stalknormal, shadowred] (mid_x1_Z_x2_YZ) -- (red_x1_Z_x2_YZ);
 								\coordinate (mid_x2_XY_x3) at (-0.800000,0.800000,0.000000);
 								\coordinate (red_x2_XY_x3) at (-0.800000,0.800000,0.080000);
 								\draw[stalknormal, shadowred] (mid_x2_XY_x3) -- (red_x2_XY_x3);
 								\coordinate (mid_x2_XZ_x3) at (-0.800000,0.000000,0.800000);
 								\coordinate (red_x2_XZ_x3) at (-0.800000,0.160000,0.800000);
 								\draw[stalknormal, shadowred] (mid_x2_XZ_x3) -- (red_x2_XZ_x3);
 								\coordinate (mid_x2_YZ_x3) at (0.000000,0.850000,0.850000);
 								\coordinate (red_x2_YZ_x3) at (-0.080000,0.850000,0.850000);
 								\draw[stalknormal, shadowred] (mid_x2_YZ_x3) -- (red_x2_YZ_x3);
 								
 								\boldstamp{lab_x}{x}{black}
 								\boldstamp{lab_x1_X}{x+1}{black}
 								\boldstamp{lab_x1_Y}{x+1}{black}
 								\boldstamp{lab_x1_Z}{x+1}{black}
 								\boldstamp{lab_x2_XY}{x+2}{black!62} 
 								\boldstamp{lab_x2_XZ}{x+2}{black}
 								\boldstamp{lab_x2_YZ}{x+2}{black}
 								\begin{scope}
 									\clip (lab_x3) ++(-25pt,-14pt) rectangle ++(50pt,11.79pt);
 									\boldstamp{lab_x3}{x+3}{black!62}
 								\end{scope}
 								\begin{scope}
 									\clip (lab_x3) ++(-25pt,-2.21pt) rectangle ++(50pt,18.21pt);
 									\boldstamp{lab_x3}{x+3}{black}
 								\end{scope}

 								\draw[stalkedge] (XmZp) -- (Zp);
 								\draw[stalkedge] (YmZm) -- (Ym);
 							\end{tikzpicture}
 				}}}\)
 			\end{tabular}
 			
 		\end{minipage}
 		\par}
 	\caption{An adhesion of a kit \(K\) at a point \(p\) on a positive \(S\). The symbol \(\bf{x+k}\) denotes the \(\left(x+k\right)\)-element fiber above the local room that \(\bf{x+k}\) is displayed within. Each arrow represents some choice of injection between these sets so as to describe an admissible adhesion of the points above \(p\) in the parts of the complete kit \(\wh{K}\). Any square diagram is a push-out diagram in the underlying gluing category \(\mathscr{G}_p\) of \(K\) at \(p\).}
 	\label{fig:instruction manual picture}
 \end{figure}
 
\begin{defn}[Topology of an instruction manual]  \label{def:instruction manual topology}
The topology of the instruction manual \(E_K\) for a kit \(K\) of any positive \(S\) is defined using the local model neighborhoods from the maximal model atlas \(\mathfrak{M}_S\) of \(S\). \emph{The topology on \(E_K\)} is the topology determined by the following definition of a basis of open sets. Let \(\wh{F_K} : \wh{K} \to B_S\) be the Fox completion of \(K\) spread over the building \(B_S\) of \(S\). For any local model neighborhood \(U\) from \(\mathfrak{M}_S\), let \(U^{F_K}\) denote the finite set of connected components of the inverse image \(\wh{F_K}^{-1}\left(U\right)\). For all points \(p\) inside the intersection \(U \cap S\), any adhesion \(a\) of \(K\) at \(p\) induces an equivalence relation \(\sim_a\) on \(U^{F_K}\) under which two components are related if \(a\) adheres them. For an adhesion \(a\) of \(K\) at \(p\) and a local model neighborhood \(U\) containing \(p\), define \emph{the \(U\)-neighborhood of \(a\) in \(E_K\)} to be the set
\begin{align*}
	\left\{b \in E_K : \phi_K\left(b\right) \in U \text{ and the quotient } U^{F_K} \to U^{F_K}/\sim_a \text{ factors through the quotient } U^{F_K} \to U^{F_K}/\sim_b\right\}.
\end{align*}
\end{defn}

\begin{thm} 	\label{thm:globalsection}
	Let \(K\) be a kit of any positive generically immersed 2-sphere \(S\) in 3-space. Then \(K\) is the kit assembled by a filling \(F\) of \(S\) if and only if there exists a global section of the instructions for \(K\).  
\end{thm}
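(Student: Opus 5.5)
The forward direction is essentially already in hand. Suppose \(K\) is the kit \(K_F\) assembled by a filling \(F\) of \(S\). For each point \(p\) on \(S\), \cref{thm:push-out} produces the assembly \(\mathfrak{D}_F(p)\), which is an element of \(E_K(p)\) in the sense of \cref{def:allpushouts}. This gives a set-theoretic section \(s(p)=\mathfrak{D}_F(p)\) of \(\phi_K\). Continuity is checked on a local model neighborhood \(U\) from \(\mathfrak{M}_S\) containing \(p\). The relation \(\sim_{s(p)}\) on \(U^{F_K}\) is exactly "the two components have images under \(g_F\) whose closures meet in \(F^{-1}(U)\)." For \(q\in U\cap S\) near \(p\), the assembly at \(q\) adheres only components that are already adhered by \(g_F\) at \(p\), because the local models of \cref{fig:local pictures in preimage} are conical. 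So the quotient by \(\sim_{s(p)}\) factors through the quotient by \(\sim_{s(q)}\), which puts \(s(q)\) in the \(U\)-neighborhood of \(s(p)\). Hence \(s\) is a global section.

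For the converse, fix a global section \(s\) of \(\phi_K\). Define \(M\) as the quotient of the complete kit \(\wh{K}\) by the equivalence relation generated by declaring: for each \(p\in S\), a point of \(r^{K}\) is equivalent to its image under each injection of the diagram \(s(p)\). Here \(r^K\) is the fiber above the local room \(r\) in \(\wh{K}\). The map \(\wh{F_K}\) descends to \(F:M\to B_S\). The work is to show that \(M\) is a compact smooth manifold with boundary, that \(F\) is an immersion whose boundary restriction parameterizes \(S\), and that \(F^{-1}(B_S\setminus S)=K\).

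\begin{enumerate}[(i)]
\item \emph{Compactness.} \(\wh{K}\) is compact, since it is a finite cover of the compact Fox completions, as in \cref{lem:assemblage}. Hence \(M\) is compact.
\item \emph{Local structure.} Continuity of \(s\) is what makes the identifications coherent. On a local model neighborhood \(U\) of \(p\), the identifications made at nearby points \(q\) are those induced by \(s(p)\) itself, by the definition of the \(U\)-neighborhood basis. Thus over \(U\) the quotient is determined by the single diagram \(s(p)\).
\item \emph{Local pieces.} The push-out condition in \(s(p)\) means each fiber above the terminal local room is exactly covered by the images of the terminal arrows. So, reading \cref{fig:instruction manual picture} against \cref{fig:local chamber pictures,fig:resolved chamber pictures}, the resolved corners of the pieces glue into disjoint copies of the local models of \cref{fig:local pictures in preimage}. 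These are \(x\) interior models and \(k\) boundary models, matching the counts of \cref{lem:filling to building,cor:count}, and no leftover corner remains; this is the argument of \cref{thm:push-out} run backwards.
\item \emph{Charts.} Each local piece maps by \(F\) homeomorphically onto its image in \(U\), compatibly with the smooth charts of \(U\). So \(M\) gets a smooth manifold-with-boundary structure making \(F\) a local diffeomorphism.
\item \emph{Hausdorff and boundary.} \(M\) is Hausdorff because distinct local pieces over \(U\) are disjoint. Its boundary is the union of boundary single points. \(F\) restricted to the boundary is injective on single points and has exactly \(k\) preimages at \(k\)-fold points, so it parameterizes \(S\).
\item \emph{Recovering \(K\).} Away from \(S\) nothing is identified, so \(F^{-1}(B_S\setminus S)=K\), and \(K=K_F\).
\end{enumerate}

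The main obstacle is step (ii)–(iii): showing that the gluings prescribed pointwise by \(s\) are locally constant along the strata. Concretely, along a double curve the square \(s(p)\) must agree with the morphisms \(s(q)\) at nearby single points, and near a triple point the faces of the cube must agree with the squares on the six adjacent double-curve half-edges. My plan is to unwind the factorization condition in \cref{def:instruction manual topology}. Because \(U^{F_K}\) has finitely many elements and the local rooms at \(q\) refine those at \(p\) in the model, "the quotient by \(\sim_{s(p)}\) factors through the quotient by \(\sim_{s(q)}\)" should force the restriction of \(s(p)\) to the local rooms at \(q\) to equal \(s(q)\). Establishing this restriction statement carefully in each of the three model cases is where I expect the effort to lie. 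Once it holds, the quotient is locally a finite push-out of manifolds with corners along faces, and the smoothing in step (iii) follows from the models.
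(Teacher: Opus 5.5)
Your proposal is correct and follows essentially the same route as the paper. The forward direction reads off a section from \cref{thm:push-out} and checks continuity. The converse forms the quotient of \(\wh{K}\) prescribed by the section, checks by a local analysis at each point of \(S\) that it is a Hausdorff manifold with boundary (the paper phrases this via links becoming spheres and domes, you via gluing into the local models), and transports the smooth structure along the induced local homeomorphism to \(B_S\). You are in fact more explicit than the paper about compactness of \(M\) and about why continuity of the section forces adhesions at nearby points to be restrictions of \(s(p)\), which the paper leaves implicit.
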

\begin{proof}
	If \(K\) is the kit assembled by a filling \(F\) of \(S\), then the complete kit \(\wh{K_F}\) assembled by \(F\) is uniquely isomorphic to the complete kit \(\wh{K}\) of \(K\).  \cref{thm:push-out} shows that any filling induces an inclusion \(S \hookrightarrow E_K\) whose composite with the instructions \(\phi_K\) for \(K\) is the identity. This map is readily seen to be continuous and hence a global section of \(\phi_K\). For the converse, observe that a global section of \(\phi_K\) determines a quotient map from \(\wh{K}\) onto some space \(M\), which must now be shown to be a manifold. 
	
	That \(M\) is Hausdorff and second-countable follows readily from the geometry of the pieces of \(\wh{K}\) and by following the injections for how the parts are adhered together along their boundaries. That \(M\) is locally Euclidean follows by examining the link of any point \(p\) on \(S\), looking at its inverse image inside \(\wh{K}\), then seeing that a section of the instructions around \(p\) identifies the components of the inverse image of the link into spheres and domes. A map \(F\) from \(M\) to the building of \(S\) is obtained via the universal property of quotient maps. By construction, the pieces are adhered together in a way to imply that \(F\) is a local homeomorphism of a manifold \(M\) with boundary into 3-space. Transport the smooth structure on 3-space to \(M\) via this local homeomorphism of a manifold with boundary to tautologically prove that \(F\) is an immersion.
\end{proof}

\begin{lem} 	\label{lem:isomorphic torsors}
	Any covering map isomorphism \(h : K \to K'\) between kits of any positive  \(S\) induces a unique homeomorphism \(h_E : E_K \to E_{K'}\) of instruction manuals that belongs to a commutative triangle of local homeomorphisms.
	\[\begin{tikzcd}
		{E_K} &&&& {E_{K'}} \\
		\\
		\\
		&& S
		\arrow["{h_E}", from=1-1, to=1-5]
		\arrow["{\phi_K}"', from=1-1, to=4-3]
		\arrow["{\phi_{K'}}", from=1-5, to=4-3]
	\end{tikzcd}\]
\end{lem}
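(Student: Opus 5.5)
The plan is to push the isomorphism \(h\) through each stage of the construction of \(E_K\): first to the complete kits, then to the gluing categories, then to the instructions at each point, and finally to check continuity against the basis of \cref{def:instruction manual topology}.

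First, since \(h\) is an isomorphism of covering spaces over the complement of \(S\) inside \(B_S\), it is in particular an isomorphism of spreads over \(B_S\). Functoriality of the Fox completion gives a map \(\wh{h} : \wh{K} \to \wh{K'}\) over \(B_S\). Applying the same reasoning to \(h^{-1}\), and using uniqueness up to unique isomorphism, shows that \(\wh{h}\) is a homeomorphism commuting with \(\wh{F_K}\) and \(\wh{F_{K'}}\). It follows that, for every point \(p\) on \(S\) and every local room \(r\) in \(\mathfrak{R}_p\), \(\wh{h}\) restricts to a bijection from the fiber above \(r\) in \(\wh{K}\) onto the fiber above \(r\) in \(\wh{K'}\). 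The reason is that these fibers are defined purely in terms of closures of inverse images of representatives of \(r\), which \(\wh{h}\) preserves.

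Second, these bijections induce an isomorphism of gluing categories \(\mathscr{G}_p\left(K\right) \to \mathscr{G}_p\left(K'\right)\) by conjugating injections. The relation \(\rightarrowtriangle\) on \(\mathfrak{R}_p\) is intrinsic to \(S\) and is therefore respected. Conjugation carries commuting squares and cubes to commuting squares and cubes, and it carries push-out squares to push-out squares because it is an isomorphism of categories. Hence we get a bijection \(E_K(p) \to E_{K'}(p)\) for each \(p\), and we define \(h_E\) to be the union of these bijections. It covers the identity on \(S\) by construction, so the triangle commutes. The inverse of \(h_E\) is \(\left(h^{-1}\right)_E\).

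Third, and this is the step I expect to be the main obstacle, \(h_E\) must be shown to be continuous. For a local model neighborhood \(U\), \(\wh{h}\) induces a bijection \(U^{F_K} \to U^{F_{K'}}\) on connected components of inverse images, since it is a homeomorphism over \(B_S\). We need to check that this bijection is compatible with the equivalence relations: \(\sim_a\) must correspond to \(\sim_{h_E(a)}\). This holds once one unwinds that an adhesion relates two components precisely when its injections identify points of their closures above \(p\), and \(\wh{h}\) carries those closures to closures. Granting this, \(h_E\) maps the \(U\)-neighborhood of \(a\) exactly onto the \(U\)-neighborhood of \(h_E(a)\), so \(h_E\) is open and continuous. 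Running the same argument for \(h^{-1}\) shows that \(h_E\) is a homeomorphism.

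Finally, uniqueness. Any homeomorphism \(E_K \to E_{K'}\) making the triangle commute must preserve fibers of the instructions, but fiberwise bijections alone are not determined. So uniqueness has to be read as uniqueness among maps induced by \(h\), namely through the unique \(\wh{h}\) supplied by the universal property of the Fox completion. That is the sense in which I would state and prove it.
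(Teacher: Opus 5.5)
Your proposal is correct and takes essentially the same approach as the paper. The paper likewise obtains \(\wh{h} : \wh{K} \to \wh{K'}\) from functoriality of the Fox completion, then observes that adhesions and the basic neighborhoods of \cref{def:instruction manual topology} are defined intrinsically from \(\wh{K}\) over \(B_S\), so \(h_E\) is produced canonically and respects the topologies. Your version spells out what the paper compresses (the induced isomorphism of gluing categories, preservation of push-outs, and the correspondence between \(\sim_a\) and \(\sim_{h_E(a)}\)), and your reading of uniqueness as uniqueness of the map induced through \(\wh{h}\) matches the paper's ``produced canonically.''
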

\begin{proof}
	Functoriality of the Fox completion gives a unique isomorphism between the complete kits \(\wh{K}\) and \(\wh{K'}\). All adhesions in \(E_K\) and \(E_{K'}\) are defined intrinsically in terms of the sets \(\wh{K}\) and \(\wh{K'}\), so the commuting homeomorphism \(h_E\) is produced canonically and in a way that respects the topologies.
\end{proof}

The problem of determining whether a positive \(S\) is fillable has been reduced to the problem of finding a global section of the instructions \(\phi_K\) for at least one kit \(K\) of \(S\).  The sheaf of finite groups over \(S\) that is constructed next is the algebraic tool with which we will solve this reduced problem. This sheaf of groups \(\mathscr{A}_S\) is defined so that the sheaf of sections of \(\phi_K\) is a torsor for \(\mathscr{A}_S\). Consequently, \(\phi_K\) admits a global section if and only if the class of this torsor is the preferred point. Call this obstructing cohomology class \emph{the assemblage class of \(K\)}.  Only finitely many assemblage classes of  \(S\) arise, because the set of inequivalent selections from the hardware store of \(S\) is a finite set, and the sheaves of sections of the instructions for equivalent selections of kit are proven to be isomorphic by \cref{lem:isomorphic torsors}. Extra structure described in \cref{prop:assemblysheaftorsor} implies that the first cohomology set of \(S\) with coefficients in its assembly sheaf is a finite set. 

\begin{defn}[The assembly sheaf] 	\label{def:assemblysheaf}
	Let \(S\) be the image of a generic codimension one immersion of a closed, connected, and orientable manifold into Euclidean space. \emph{The assembly sheaf of \(S\)} is the sheaf of groups  \(\mathscr{A}_S\) determined by the following definition on the maximal model atlas \(\mathfrak{M}_S\) of \(S\).  For a local model neighborhood \(U\)  from \(\mathfrak{M}_S\), define \emph{the set of local walls in \(U\)} to be the finite set \(\Sigma_U\) of connected components of the single points of \(S\) inside \(U\). Define an equivalence relation \(\vec{\sim}\) on \(\Sigma_U\) by declaring \(\sigma \ \vec{\sim} \ \sigma'\) whenever \emph{the inward-pointing direction  relative to the building which contains \(\sigma\)} and the inward-pointing direction relative to the building which contains \(\sigma'\), point into the same local room at \(U\). For any Cartesian product \(\prod G \) of the same group \(G\), let \(\dDelta\left(\prod G\right)\) denote the diagonal subgroup isomorphic to \(G\). For any positive integer \(x\), let \(\mathcal{S}_x\) denote the symmetric group on \(x\) elements. Define set maps \(\mathcal{S}_{\ccdot} : \Sigma_U \to \cup_x \left\{\mathcal{S}_x\right\}\) by \(\mathcal{S}_\sigma = \mathcal{S}_{x+1}\), where \(x\) is the minimum of the absolute values of the two wrapping numbers on either side of \(\sigma\).  Define \emph{the assembly group \(\mathscr{A}_S\left(U\right)\)} at  \(U\) to be the product
	\begin{align*}
		\mathscr{A}_S\left(U\right) =  
			 \Pprod{{\left[\sigma'\right] \in \Sigma_U/\vec{\sim}}} \DDelta \left(\prod_{\sigma \in \left[\sigma'\right]}  \mathcal{S}_{\sigma}\right).
	\end{align*}
	
	For any two local model neighborhoods \(U\) and \(V\) such that \(V\) is contained in \(U\), there is a canonical covariant inclusion of local walls. Each restriction map \(\rho_{UV} : \mathscr{A}_S\left(U\right) \to \mathscr{A}_S\left(V\right)\) equals the projection onto the factor of \(\mathscr{A}_S\left(U\right)\) in the image of the canonical inclusion. The axioms of cocycle, empty set, identity, and gluing hold readily via these canonical inclusions.
\end{defn}

 \begin{prop}  	\label{prop:assemblysheaftorsor}
 	The assembly sheaf \(\mathscr{A}_S\) of any positive generically immersed 2-sphere \(S\) in 3-space  is a constructible sheaf of finite products of symmetric groups such that the sheaf of sections of the instructions of any kit of \(S\) is a torsor for \(\mathscr{A}_S\).
 \end{prop}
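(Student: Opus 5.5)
The plan is to verify the two assertions separately: first that \(\mathscr{A}_S\) is a constructible sheaf of finite products of symmetric groups, and then that the sheaf of sections of each \(\phi_K\) is an \(\mathscr{A}_S\)-torsor.

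\textbf{Constructibility.} I would use the equisingular stratification of \(S\) by single, double, and triple points. On a local model neighborhood \(U\) from \(\mathfrak{M}_S\) lying inside a single stratum, and small enough to be a model around each of its points, the set \(\Sigma_U\), the relation \(\vec{\sim}\), and the minimum wrapping numbers \(x\) are all constant. Hence \(\mathscr{A}_S\) restricted to each stratum is locally constant, with finite stalk a product of diagonal copies of \(\mathcal{S}_{x+1}\). The stalk at a point \(p\) can be computed from one model neighborhood of \(p\). At a single point there is one local wall, so the stalk is \(\mathcal{S}_{x+1}\). At a double or triple point, the local walls are the half-sheets meeting at \(p\). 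Two half-sheets lie in the same \(\vec{\sim}\)-class exactly when they are the two halves of one sheet separating the same pair of local room levels, and then the diagonal identifies their groups. Counting classes via \cref{fig:instruction manual picture} gives the groups \(\mathcal{S}_{x+1}^2\times\mathcal{S}_{x+2}\) and \(\mathcal{S}_{x+1}^3\times\mathcal{S}_{x+2}^3\times\mathcal{S}_{x+3}\). These match the torsor groups already listed in the proof of \cref{lem:stalknumbers}.

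\textbf{The action.} Next I would define an action of \(\mathscr{A}_S(U)\) on sections of \(\phi_K\) over \(U\cap S\). Over a local wall \(\sigma\) with sides of wrapping numbers \(x\) and \(x+1\), an adhesion is an injection from the \(x\)-element fiber into the \((x+1)\)-element fiber. The natural way to have \(\mathcal{S}_{x+1}\) act is to fix a reference section over \(U\). This identifies each fiber above the larger local room with \(\{1,\dots,x+1\}\) and extends the smaller fiber to \(x+1\) elements by adjoining the boundary point. The group then acts by postcomposition on the arrows crossing \(\sigma\).

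The diagonal constraint is what makes the action well defined. Walls in one \(\vec{\sim}\)-class point into the same local room, so a single permutation of that room's fiber must act compatibly on all arrows entering it. This is what keeps push-out squares and cubes push-outs, by the counting criterion in \cref{thm:push-out}. Compatibility with restriction \(\rho_{UV}\) holds because restriction projects onto factors indexed by the sub-walls. Continuity follows from the basis in \cref{def:instruction manual topology}: the action preserves the equivalence relations \(\sim_a\) up to relabeling.

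\textbf{Torsor property.} Freeness and transitivity can be checked on stalks, since both sheaves have finite stalks and \(\phi_K\) is a surjective local homeomorphism. At each \(p\), \cref{lem:stalknumbers} shows the stalk \(E_K(p)\) has the same cardinality as the stalk group. So it suffices to show the action is free, which holds because a permutation fixing every arrow of a push-out diagram fixes every element of the terminal fibers, these being unions of images. Local nonemptiness of sections holds because \(\phi_K\) is a surjective local homeomorphism.

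\textbf{Main obstacle.} I expect the hardest step to be making the action canonical rather than dependent on the reference section, that is, showing the local actions glue. If the identification with the reference section is not canonical, I would instead realize \(\mathscr{A}_S\) as the sheaf of automorphisms of the torsor. I would then verify that its stalks and restrictions coincide with those in \cref{def:assemblysheaf}, since a sheaf of sets that is locally a torsor for its automorphism sheaf is automatically a torsor.
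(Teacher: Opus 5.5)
\textbf{Overall.} Your outline follows the paper's proof. You compute \(\mathscr{A}_S(U)\) on the three kinds of model neighborhoods, let the symmetric factors act by permuting images of the injections in an adhesion, use the push-out condition to justify the diagonal over walls pointing into a common local room, and read constructibility off the stratification. Your stalkwise ``equal cardinality plus freeness'' check via \cref{lem:stalknumbers} is a worthwhile addition.

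\textbf{The gap.} The step you single out as the main obstacle is a genuine gap. Neither your fallback nor the paper's proof closes it; the paper only asserts that each group ``acts by permuting the images of injections''.
The group that acts canonically on \(E_K(p)\) is \(\prod_r \mathrm{Sym}(r^K)\), taken over the non-minimal local rooms \(r\) at \(p\). It acts on an arrow \(f : r^K \to r'^K\) by \(f \mapsto g_{r'} \circ f \circ g_r^{-1}\). Post-composition on incoming arrows alone, as you describe it, breaks commutativity: on an intermediate local room it forces \(g\) to fix the images of the incoming arrows pointwise, hence \(g = 1\).
To turn this into an action of \(\mathscr{A}_S(U)\), you must label each \(r^K\) by \(\{1, \dots, w(r)\}\). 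Since every \(\rho_{UV}\) is the identity on factors, the labelings over different model neighborhoods must agree. Along each stratum, however, the fibers \(r^K\) form a local system: the part of \(\wh{K}\) over \(\wh{R}\), restricted to the face of \(\wh{R}\) lying over the wall. Compatible labelings exist only when this monodromy is trivial. So your claim that compatibility with \(\rho_{UV}\) ``holds because restriction projects onto factors'' is exactly what fails.

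\textbf{Why the fallback does not work.} The automorphism sheaf in your fallback is a kit-dependent inner form \(\mathscr{A}_K\) of \(\mathscr{A}_S\). Its stalks agree with those of \(\mathscr{A}_S\), but its restrictions are twisted by conjugation by that monodromy. This is a genuine twist whenever the monodromy is nontrivial and \(w(r) \geq 3\). So ``verify that its restrictions coincide with those in \cref{def:assemblysheaf}'' cannot be carried out. Note also that the automorphism sheaf of the sheaf of sections \emph{as a sheaf of sets} has stalks \(\mathrm{Sym}(E_K(p))\), which do not act freely; the sheaf you want is \(\prod_r \mathrm{Aut}(r^K)\).

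\textbf{No other action rescues the statement.} Suppose an annular wall separates rooms of wrapping numbers \(2\) and \(3\). Suppose the chosen parts have monodromy a transposition \(\alpha\) and a \(3\)-cycle \(\beta\) around the annulus's core. Then the monodromy \(f \mapsto \beta f \alpha^{-1}\) of \(E_K\) is a single \(6\)-cycle. But \(\mathscr{A}_S\) is the constant sheaf \(\mathcal{S}_3\) there, and every \(\mathcal{S}_3\)-torsor over an annulus has monodromy a right translation, of order at most \(3\).

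\textbf{What your argument does prove.} The sections of \(\phi_K\) form a torsor for \(\mathscr{A}_K\), or for \(\mathscr{A}_S\) under a trivial-monodromy hypothesis on the parts. That still yields the global-section criterion, but it places the classes of different kits in different cohomology sets.

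\textbf{A smaller slip.} At a double point, the walls identified by \(\vec{\sim}\) are the half-sheets of the \emph{two different} sheets bounding the local room of wrapping number \(x+2\). They are not the two halves of one sheet, which separate different pairs of levels. The groups you quote are right only under the former rule.
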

 \begin{proof}
 	 The assembly group of any local model neighborhood \(U\) from the maximal model atlas \(\mathfrak{M}_S\) of \(S\) is isomorphic to 
 	 \begin{align*}
 	 	\mathscr{A}_S\left(U\right) \cong \begin{cases} 
 	 		\mathcal{S}_{x+1} & \text{if } U \text{ is a single point neighborhood,} \\
 	 		\mathcal{S}_{x+1} \times \mathcal{S}_{x+1} \times \mathcal{S}_{x+2} & \text{if } U \text{ is a double point neighborhood,} \\
 	 		\mathcal{S}_{x+1} \times \mathcal{S}_{x+1} \times \mathcal{S}_{x+1} \times \mathcal{S}_{x+2} \times \mathcal{S}_{x+2} \times \mathcal{S}_{x+2} \times \mathcal{S}_{x+3} & \text{if } U \text{ is a triple point neighborhood,}
 	 	\end{cases}
 	 \end{align*} 
 	where the value of \(x\) is equal to the minimum of the wrapping numbers of the local rooms at \(U\). Each group acts by permuting the images of injections of the assembly diagrams at the points of \(S\). If two local walls point towards the same local room, then deciding upon any one of those injections will uniquely determine the others by the universal property of the push-out. The assembly sheaf of \(S\) is \emph{constructible} in the sense that its value is locally constant along the strata of single, double, and triple points. In addition, there are homomorphisms from groups of less singular into more singular strata, given by the canonical inclusions.
 \end{proof}
 	
Deciding whether the image \(S\) of a generic immersion of the 2-sphere into 3-space is fillable has now been reduced to (i) a linear-time computation of the wrapping numbers, followed by (ii) a computation of the finitely many assemblage classes of \(S\) that live in the finite first cohomology set of \(S\) with coefficients in its assembly sheaf. To determine whether an extending domain is the 3-ball, use a 3-ball recognition algorithm.

 A generalized main theorem that characterizes compact fillings of a generic codimension one immersion, with connected image, of a closed manifold inside Euclidean space is left as an exercise for the reader. For disconnected domains, one desires to understand algebraically what is the meaning of iterating over different normal direction fields on the image to obtain some obstruction classes inside the first cohomology sets with different but related coefficient sheaves. Surgery is an approach to connect disconnected images. 
 
 The formula contained in the main theorem counting the number of inequivalent fillings now has a conceptual explanation. \cref{lem:isomorphic torsors} proves that the deck group of any kit acts freely on the set of global sections of its instructions. This action produces equivalences because one simply divides out at the end of a construction by the exact ambiguity incurred at the start upon choosing a representative from our finite moduli space of finite degree covering spaces. For the converse, any equivalence of fillings restricts to a diffeomorphism of their assembled kits, and any two such diffeomorphisms differ only by a deck transformation of some fixed reference kit.

\section{Gallery, References, and Tool Use}

The mathematics is a result of my thinking and lived experience used for a few years now towards understanding immersed 2-spheres inside 3-space. The figures are a result of working for a few weeks with A.I. tools. Such tools also gathered references for myself to write a proper history. The blue surfaces are generically immersed 2-spheres in 3-space. The red curves are the level sets of some Morse function. The yellow balls are double points. The orange balls are triple points. The green balls are saddles. 
	\begin{figure}[h]
		\centering
		\includegraphics[scale=0.16]{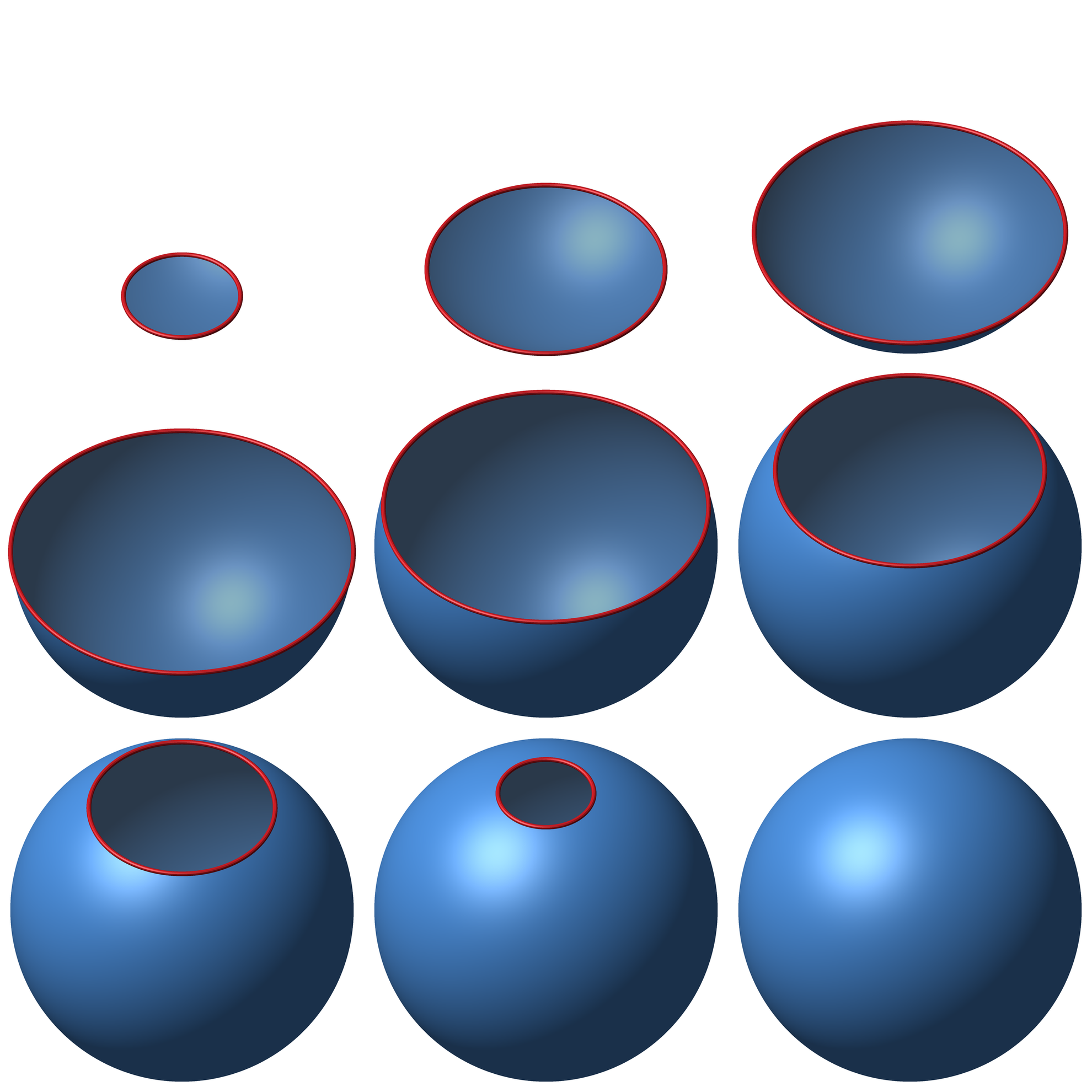}
		\caption{A level-set movie of the unit 2-sphere in 3-space coming from a height function.}
		\label{fig:sphere}
	\end{figure}
\begin{figure}
	\centering
	\includegraphics[scale=0.16]{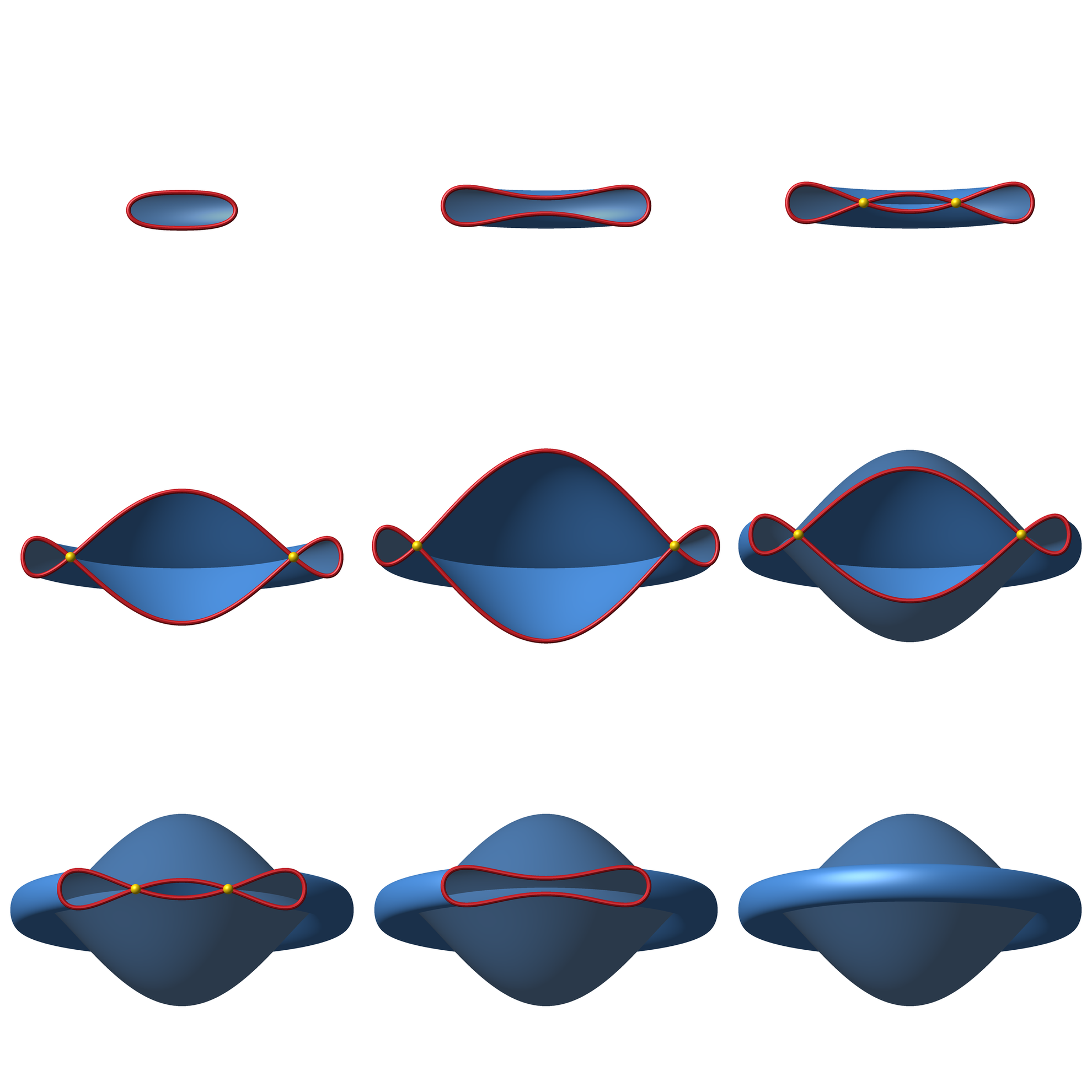}
	\caption{A disastrous finger move has destroyed the filling of this immersed sphere in 3-space.}
	\label{fig:dimple}
\end{figure}
\begin{figure}
	\centering
	\includegraphics[scale=0.16]{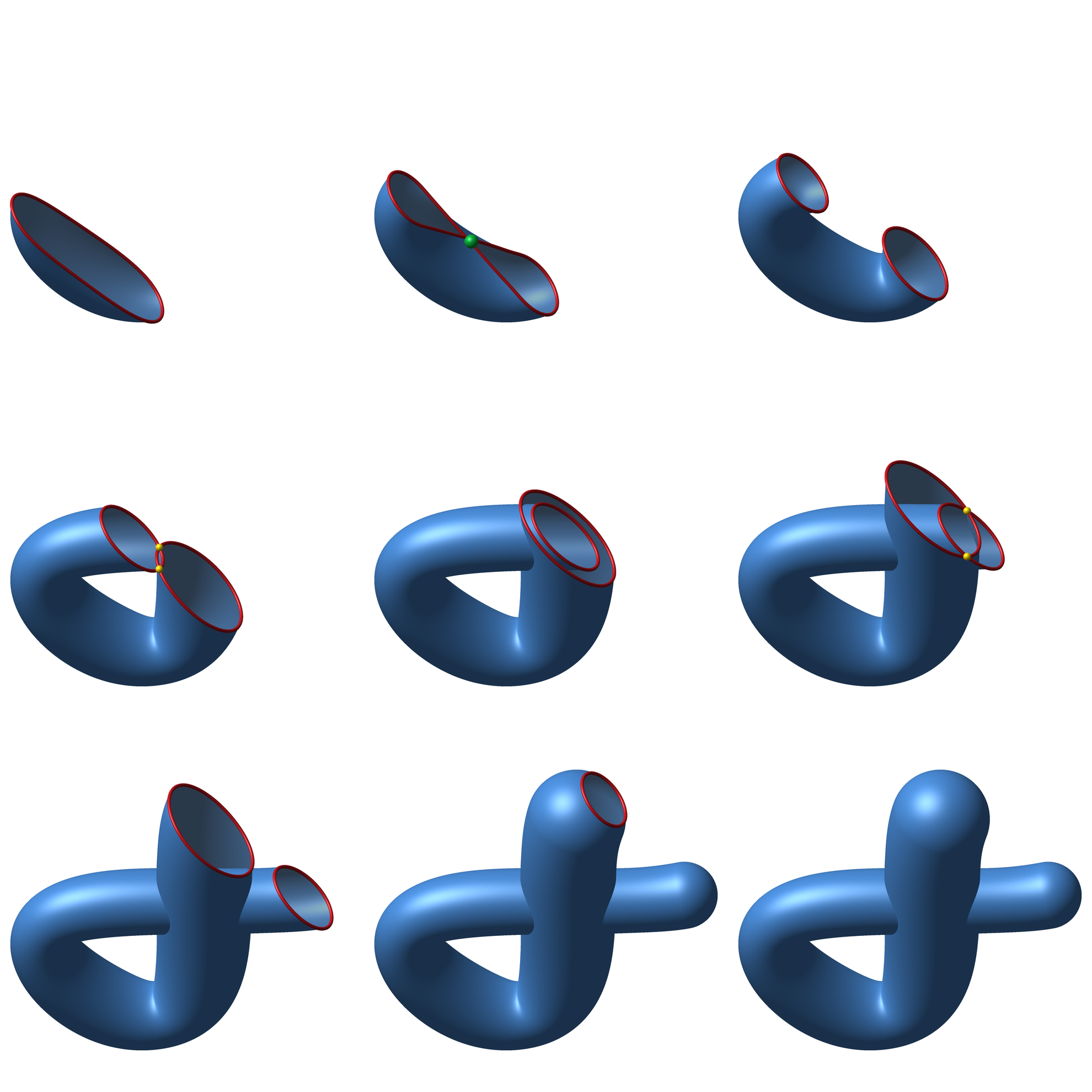}
	\caption{After four finger moves, the filling of this immersed sphere in 3-space has been preserved.}
	\label{fig:tube}
\end{figure}
\begin{figure}
	\centering
	\includegraphics[scale=0.20]{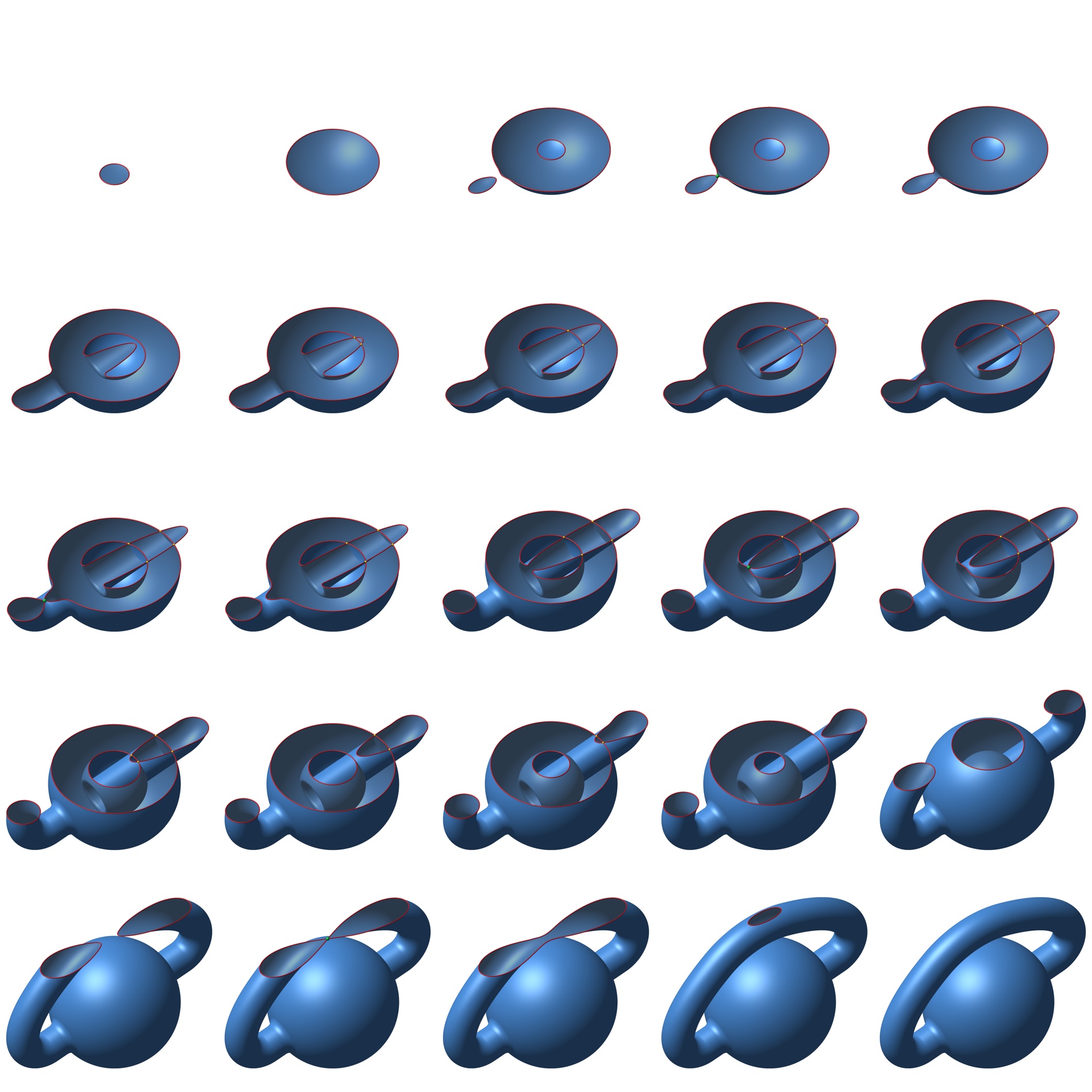}
	\caption{A kettlebell sphere immersion bounding an immersed \(S^2 \times S^1\) less an open ball.}
	\label{fig:s1xs2}
\end{figure}
\begin{figure}
	\centering
	\includegraphics[scale=0.20]{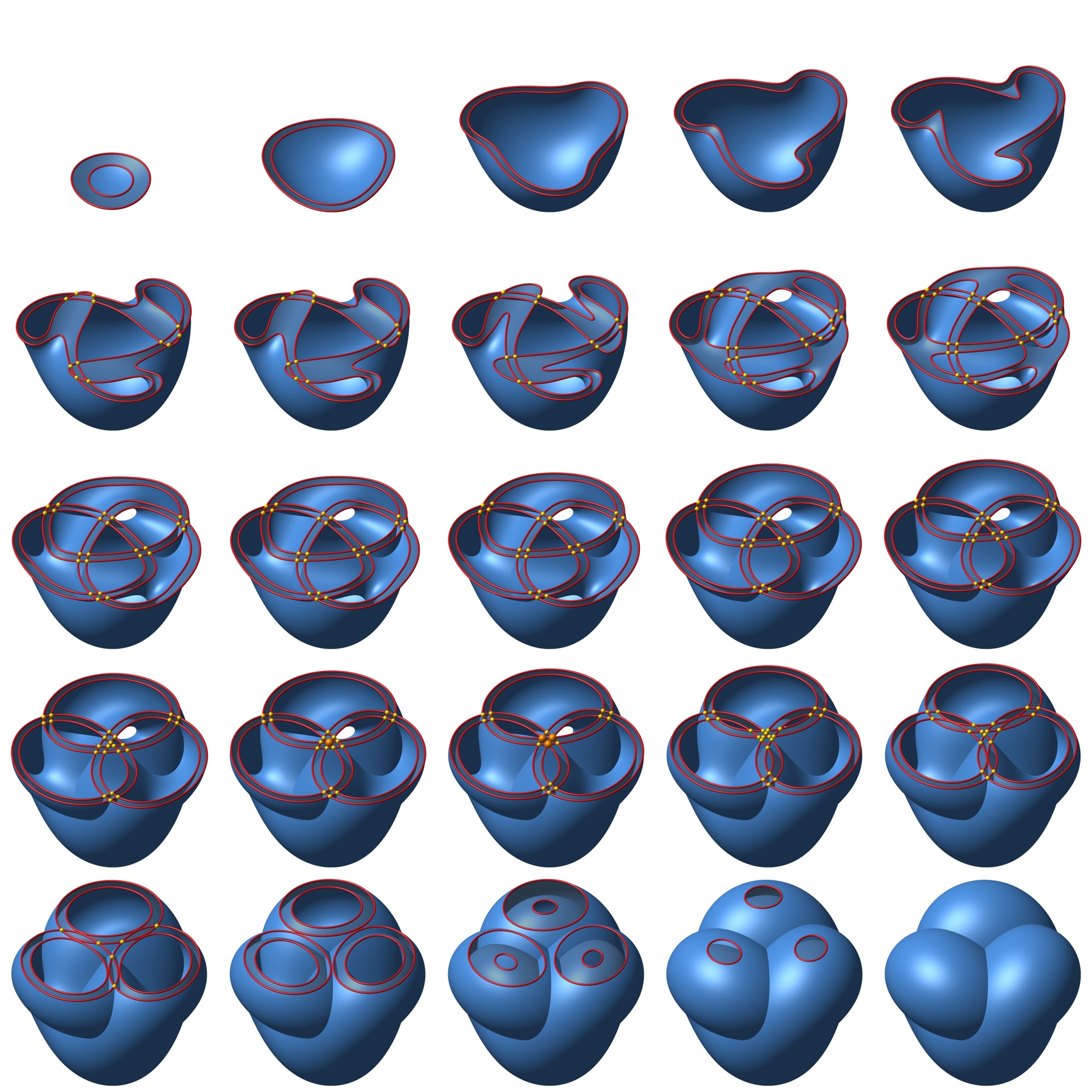}
	\caption{\citeauthor{Giller1982}'s 1982 surface is the boundary of a disk bundle on Boy's surface and bounds an immersed real projective 3-space less an open ball. This is because the 3-manifold it bounds is the disk bundle of the nontrivial line bundle over real projective 2-space.}
	\label{fig:giller}
\end{figure}
\begin{figure}
	\centering
	\includegraphics[scale=0.23]{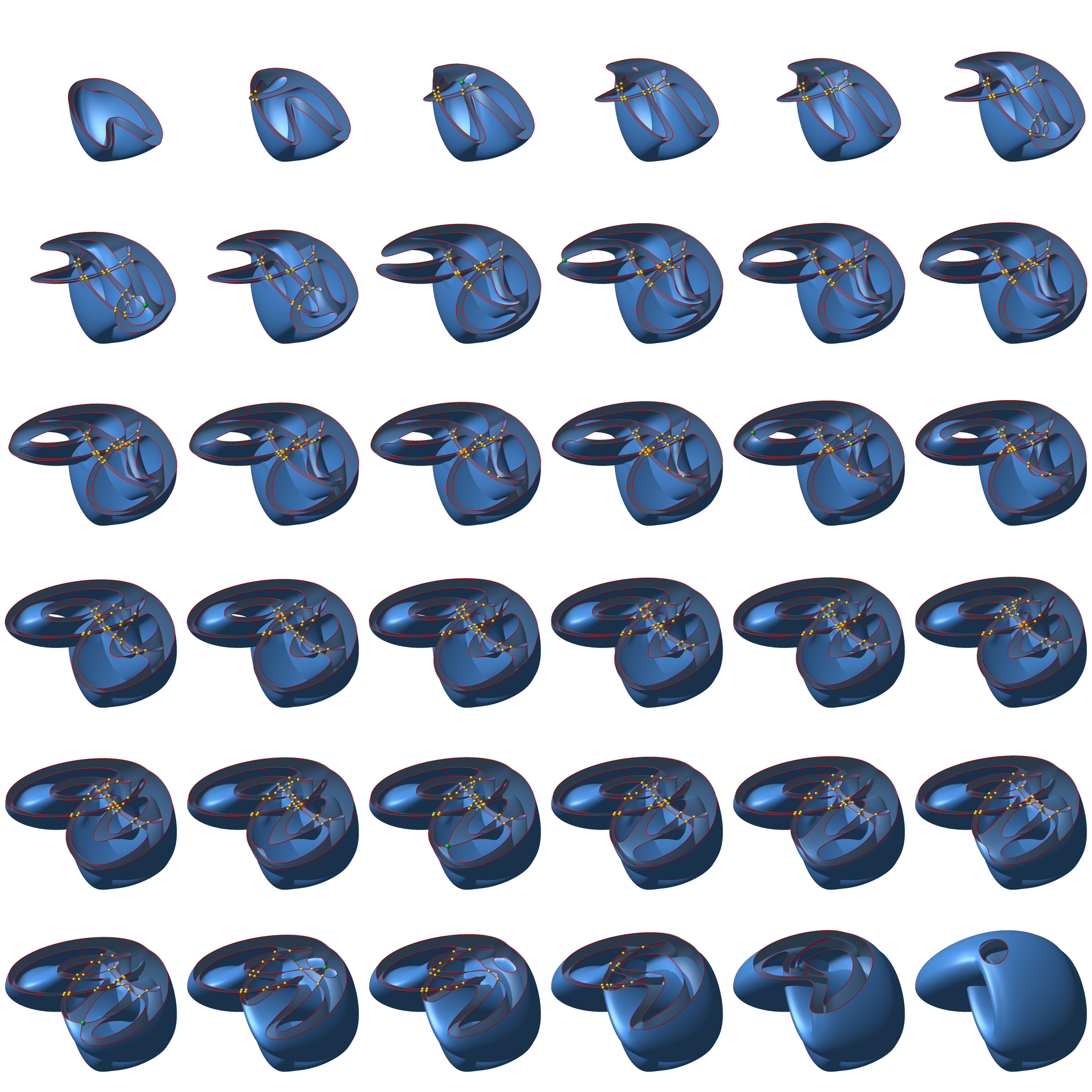}
	\caption{\citeauthor{Ferry1974}'s surface is an immersed sphere that bounds an immersed 3-torus less an open ball. The formula for this immersion was recorded by Steven \cite{Ferry1974}.}
	\label{fig:ferry}
\end{figure}
\begin{figure}
	\centering
	\includegraphics[scale=0.23]{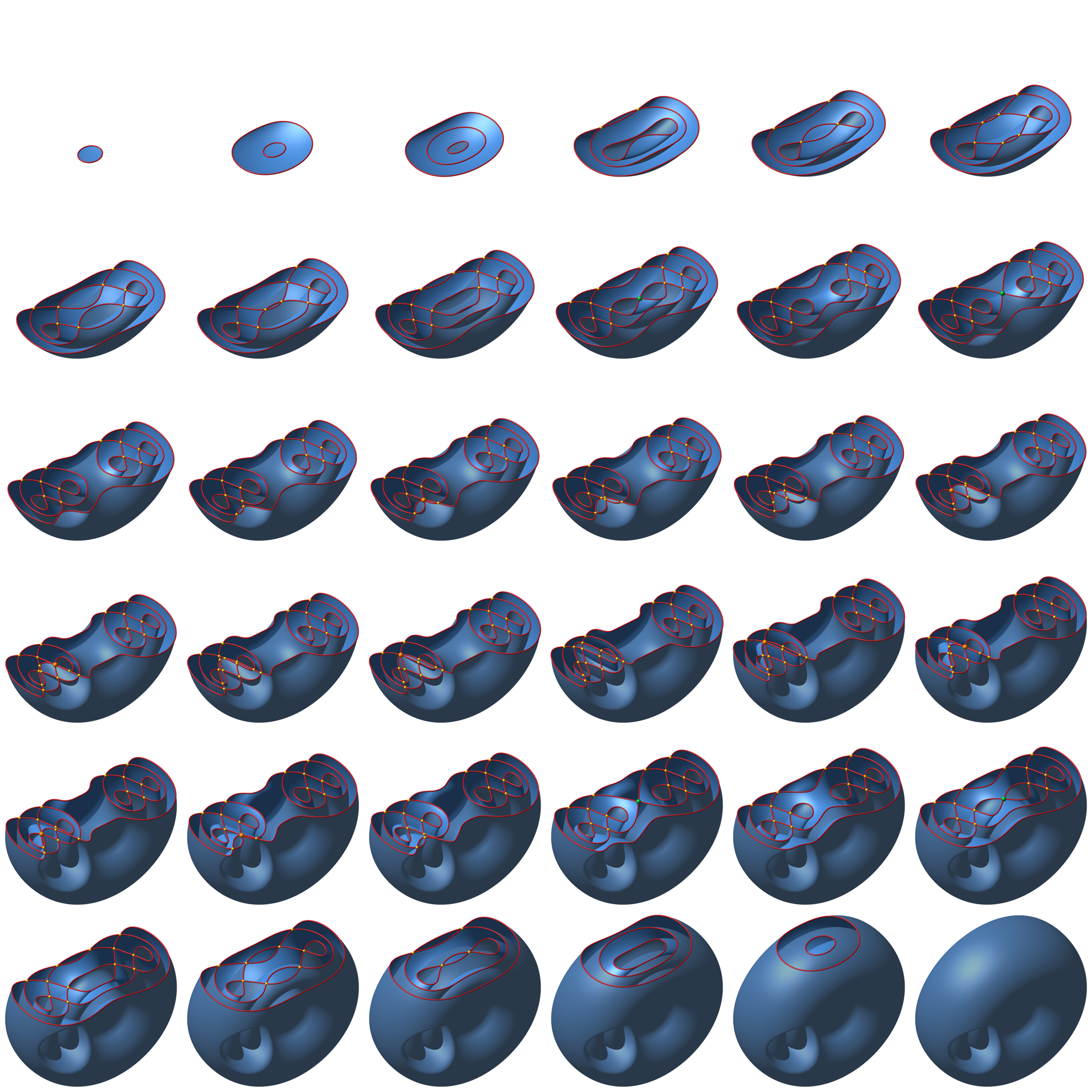}
	\caption{A positive sphere immersion that is not fillable, yet is one finger move away from an immersed sphere whose only fillings are by the 3-ball, in exactly two inequivalent ways. Positivity follows by inspection, while Sandy \citeauthor{blank1967}'s algorithm is applied to frame 22 to prove that no filling by a disk exists. Gauss--Bonnet implies that any planar extension of this immersed circle has Euler characteristic one. \cref{prop:transversal slice} is then applied to obstruct the existence of a filling.}
	\label{fig:milnorpush}
\end{figure}

\bibliographystyle{plainnat}
\bibliography{Dissertation}

\begin{thebibliography}{55}
\providecommand{\natexlab}[1]{#1}
\providecommand{\url}[1]{\texttt{#1}}
\expandafter\ifx\csname urlstyle\endcsname\relax
  \providecommand{\doi}[1]{doi: #1}\else
  \providecommand{\doi}{doi: \begingroup \urlstyle{rm}\Url}\fi

\bibitem[Adams(1966)]{AdamsJGroups}
J.~F. Adams.
\newblock On the groups {$J(X)$}---{I}{V}.
\newblock \emph{Topology}, 5\penalty0 (1):\penalty0 21--71, 1966.

\bibitem[Akbulut and King(1992)]{akbulutking1992}
Selman Akbulut and Henry King.
\newblock \emph{Topology of Real Algebraic Sets}, volume~25 of
  \emph{Mathematical Sciences Research Institute Publications}.
\newblock Springer-Verlag, New York, 1st edition, 1992.

\bibitem[Bailey(1975)]{Bailey1975}
Keith~D. Bailey.
\newblock Extending closed plane curves to immersions of the disk with $n$
  handles.
\newblock \emph{Transactions of the American Mathematical Society},
  206:\penalty0 1--24, 1975.

\bibitem[Bais and Zuddas(2023)]{BaisZuddas2023}
Valentina Bais and Daniele Zuddas.
\newblock On {S}tiefel's parallelizability of 3-manifolds.
\newblock \emph{Expositiones Mathematicae}, 41\penalty0 (1):\penalty0 238--243,
  March 2023.

\bibitem[Banchoff(1974)]{banchoff1974a}
Thomas~F. Banchoff.
\newblock Triple points and singularities of projections of smoothly immersed
  surfaces.
\newblock \emph{Proceedings of the American Mathematical Society}, 46\penalty0
  (3):\penalty0 402--406, December 1974.

\bibitem[Blank(1967)]{blank1967}
Samuel~Joel Blank.
\newblock \emph{Extending Immersions and Regular Homotopies in Codimension 1}.
\newblock PhD thesis, Brandeis University, Waltham, Massachusetts, 1967.

\bibitem[Buchbinder et~al.(2026)Buchbinder, Krishnamoorthy, and
  Vixie]{Buchbinder2026MinimalHomotopies3D}
Lia Buchbinder, Bala Krishnamoorthy, and Kevin~R. Vixie.
\newblock Minimal homotopies in three dimensions: A cable system approach.
\newblock \emph{arXiv preprint arXiv:2605.06565}, May 2026.

\bibitem[Budney and Gabai(2021)]{BudneyGabai2021}
Ryan Budney and David Gabai.
\newblock Knotted 3-balls in {$S^4$}.
\newblock \emph{arXiv preprint arXiv:1912.09029}, April 2021.

\bibitem[Bunge and Funk(2006)]{BungeFunk2006}
Marta Bunge and Jonathon Funk.
\newblock \emph{Singular Coverings of Toposes}, volume 1890 of \emph{Lecture
  Notes in Mathematics}.
\newblock Springer, Berlin, Heidelberg, 2006.

\bibitem[Carrara et~al.(2001)Carrara, Carter, and
  Saito]{carraracartersaito2001}
Vera Carrara, J.~Scott Carter, and Masahico Saito.
\newblock Singularities of the projections of surfaces in 4-space.
\newblock \emph{Pacific Journal of Mathematics}, 199\penalty0 (1):\penalty0
  21--40, May 2001.

\bibitem[Carter and Saito(1997)]{CarterSaito1997Seifert}
J.~Scott Carter and Masahico Saito.
\newblock A {S}eifert algorithm for knotted surfaces.
\newblock \emph{Topology}, 36\penalty0 (1):\penalty0 179--201, 1997.

\bibitem[Cervera et~al.(1991)Cervera, Mascar{\'o}, and
  Michor]{CerveraMascaroMichor1991}
Vicente Cervera, Francisca Mascar{\'o}, and Peter~W. Michor.
\newblock The action of the diffeomorphism group on the space of immersions.
\newblock \emph{Differential Geometry and its Applications}, 1\penalty0
  (4):\penalty0 391--401, 1991.

\bibitem[Ekholm(2001)]{Ekholm2001}
Tobias Ekholm.
\newblock Differential 3-knots in 5-space with and without self-intersections.
\newblock \emph{Topology}, 40\penalty0 (1):\penalty0 157--196, 2001.

\bibitem[Ferry(1974)]{Ferry1974}
Steven Ferry.
\newblock An immersion of \({T}^n-{D}^n\) into \({R}^n\).
\newblock \emph{L'Enseignement Math{\'e}matique}, 20\penalty0 (3--4):\penalty0
  177--178, 1974.

\bibitem[Fox(1957)]{Fox1957}
Ralph~H. Fox.
\newblock Covering spaces with singularities.
\newblock In Ralph~H. Fox, Donald~C. Spencer, and Albert~W. Tucker, editors,
  \emph{Algebraic Geometry and Topology: A Symposium in Honor of S. Lefschetz},
  volume~12 of \emph{Princeton Mathematical Series}, pages 243--257. Princeton
  University Press, Princeton, NJ, 1957.

\bibitem[Francis(1969)]{Francis1969}
George~K. Francis.
\newblock The folded ribbon theorem: A contribution to the study of immersed
  circles.
\newblock \emph{Transactions of the American Mathematical Society},
  141:\penalty0 271--303, 1969.

\bibitem[Francis(1970)]{Francis1970}
George~K. Francis.
\newblock Extensions to the disk of properly nested plane immersions of the
  circle.
\newblock \emph{Michigan Mathematical Journal}, 17:\penalty0 373--383, 1970.

\bibitem[Francis(1973)]{Francis1973}
George~K. Francis.
\newblock Spherical curves that bound immersed discs.
\newblock \emph{Proceedings of the American Mathematical Society}, 41\penalty0
  (1):\penalty0 87--93, 1973.

\bibitem[Francis(1978)]{Francis1978}
George~K. Francis.
\newblock Polymersions with nontrivial targets.
\newblock \emph{Illinois Journal of Mathematics}, 22\penalty0 (1):\penalty0
  161--170, 1978.

\bibitem[Frisch(2010)]{Frisch2010Sphere}
Dennis Frisch.
\newblock Extending immersions into the sphere.
\newblock \emph{arXiv preprint arXiv:1012.4923}, 2010.

\bibitem[Giller(1982)]{Giller1982}
Cole~A. Giller.
\newblock Towards a classical knot theory for surfaces in \(\mathbb{R}^4\).
\newblock \emph{Illinois Journal of Mathematics}, 26\penalty0 (4):\penalty0
  591--631, 1982.

\bibitem[Godement(1958)]{Godement1958}
Roger Godement.
\newblock \emph{Topologie Alg{\'e}brique et Th{\'e}orie des Faisceaux}.
\newblock Number 1252 in Actualit{\'e}s Scientifiques et Industrielles.
  Hermann, Paris, 1958.
\newblock Publications de l'Institut de Math{\'e}matique de l'Universit{\'e} de
  Strasbourg, vol.~13; see especially pp.~109--111.

\bibitem[Gramain(1970{\natexlab{a}})]{Gramain1970a}
Andr{\'e} Gramain.
\newblock Sur les immersions de codimension 1 qui sont des bords.
\newblock \emph{Annales Scientifiques de l'\'Ecole Normale Sup{\'e}rieure},
  3\penalty0 (2):\penalty0 111--184, 1970{\natexlab{a}}.

\bibitem[Gramain(1970{\natexlab{b}})]{Gramain1970b}
Andr{\'e} Gramain.
\newblock Bounding immersions of codimension 1 in the {E}uclidean space.
\newblock \emph{Bulletin of the American Mathematical Society}, 76\penalty0
  (2):\penalty0 361--365, 1970{\natexlab{b}}.

\bibitem[Gunning and Narasimhan(1967)]{GunningNarasimhan1967}
Robert~C. Gunning and Raghavan Narasimhan.
\newblock Immersion of open {R}iemann surfaces.
\newblock \emph{Mathematische Annalen}, 174\penalty0 (2):\penalty0 103--108,
  1967.

\bibitem[Hass(1998)]{Hass1998Algorithms}
Joel Hass.
\newblock Algorithms for knots and 3-manifolds.
\newblock \emph{Chaos, Solitons \& Fractals}, 9:\penalty0 569--581, 1998.

\bibitem[Hironaka(1964)]{Hironaka1964}
Heisuke Hironaka.
\newblock Resolution of singularities of an algebraic variety over a field of
  characteristic zero. {I}, {II}.
\newblock \emph{Annals of Mathematics}, 79:\penalty0 109--326, 1964.

\bibitem[Hirsch(1959)]{hirsch1959}
Morris~W. Hirsch.
\newblock Immersions of manifolds.
\newblock \emph{Transactions of the American Mathematical Society}, 93\penalty0
  (2):\penalty0 242--276, February 1959.

\bibitem[Kirby(1997)]{Kirby1997}
Robion~C. Kirby.
\newblock Problems in low-dimensional topology.
\newblock In William~H. Kazez, editor, \emph{Geometric Topology (Athens, GA,
  1993)}, volume~2 of \emph{AMS/IP Studies in Advanced Mathematics}, pages
  35--473. American Mathematical Society, Providence, RI, 1997.

\bibitem[Kusner(2005)]{kusner2005conformal}
Rob Kusner.
\newblock Conformal structures and necksizes of embedded constant mean
  curvature surfaces.
\newblock In David~A. Hoffman, editor, \emph{Global Theory of Minimal
  Surfaces}, volume~2 of \emph{Clay Mathematics Proceedings}, pages 585--596.
  American Mathematical Society, Providence, RI, 2005.

\bibitem[Li and Barbi{\v{c}}(2018)]{LiBarbic2018}
Yijing Li and Jernej Barbi{\v{c}}.
\newblock Immersion of self-intersecting solids and surfaces.
\newblock \emph{ACM Transactions on Graphics}, 37\penalty0 (4):\penalty0
  45:1--45:14, August 2018.

\bibitem[Marx(1965)]{Marx1965}
Morris~L. Marx.
\newblock Normal curves arising from light open mappings of the annulus.
\newblock \emph{Transactions of the American Mathematical Society},
  120\penalty0 (1):\penalty0 46--56, 1965.

\bibitem[Marx(1968)]{Marx1968}
Morris~L. Marx.
\newblock The branch point structure of extensions of interior boundaries.
\newblock \emph{Transactions of the American Mathematical Society},
  131\penalty0 (1):\penalty0 79--98, 1968.

\bibitem[Marx(1974)]{Marx1974}
Morris~L. Marx.
\newblock Extensions of normal immersions of {$S^1$} into {$R^2$}.
\newblock \emph{Transactions of the American Mathematical Society},
  187:\penalty0 309--326, 1974.

\bibitem[Marx and Verhey(1970)]{MarxVerhey1970}
Morris~L. Marx and Roger~F. Verhey.
\newblock Interior and polynomial extensions of immersed circles.
\newblock \emph{Proceedings of the American Mathematical Society}, 24:\penalty0
  41--49, 1970.

\bibitem[Masuda and Nagase(2001)]{MasudaNagase2001}
Keiichi Masuda and Teruo Nagase.
\newblock A criterion sphere for extensions of a nice immersion of the 2-sphere
  into the {E}uclidean 3-space.
\newblock \emph{Yokohama Mathematical Journal}, 49\penalty0 (1):\penalty0
  17--35, 2001.

\bibitem[Mather(2012)]{Mather2012}
John~N. Mather.
\newblock Notes on topological stability.
\newblock \emph{Bulletin of the American Mathematical Society}, 49\penalty0
  (4):\penalty0 475--506, 2012.

\bibitem[McIntyre(1997)]{McIntyre1997}
Margaret McIntyre.
\newblock Bounding immersed curves.
\newblock \emph{Topology and its Applications}, 77:\penalty0 205--220, 1997.

\bibitem[Pappas(1996)]{Pappas1996}
Christian Pappas.
\newblock Extensions of codimension one immersions.
\newblock \emph{Transactions of the American Mathematical Society},
  348\penalty0 (8):\penalty0 3065--3083, 1996.

\bibitem[Phillips(1967)]{Phillips1967}
Anthony Phillips.
\newblock Submersions of open manifolds.
\newblock \emph{Topology}, 6\penalty0 (2):\penalty0 171--206, April 1967.
\newblock Slightly expanded version of the author's Ph.D. thesis, Princeton
  University, 1966.

\bibitem[Po{\'e}naru(1968)]{Poenaru1968}
Valentin Po{\'e}naru.
\newblock Extension des immersions en codimension 1 (d'apr{\`e}s {S}. {B}lank).
\newblock In \emph{S{\'e}minaire Bourbaki}, number~10, pages 473--505.
  Soci{\'e}t{\'e} Math{\'e}matique de France, 1968.
\newblock Expos{\'e} no. 342.

\bibitem[Poincar\'e(1895)]{poincare1895}
Henri Poincar\'e.
\newblock Analysis situs.
\newblock \emph{Journal de l'\'Ecole Polytechnique (2)}, 1:\penalty0 1--121,
  1895.
\newblock \S 9.

\bibitem[Rubinstein(1995)]{Rubinstein1995}
J.~Hyam Rubinstein.
\newblock An algorithm to recognize the {$3$}-sphere.
\newblock In S.~D. Chatterji, editor, \emph{Proceedings of the {I}nternational
  {C}ongress of {M}athematicians, {V}ol.\ 1, 2 ({Z}\"{u}rich, 1994)}, pages
  601--611, Basel, 1995. Birkh\"{a}user.

\bibitem[Sassen et~al.(2024)Sassen, Schumacher, Rumpf, and Crane]{Sassen2024}
Josua Sassen, Henrik Schumacher, Martin Rumpf, and Keenan Crane.
\newblock Repulsive shells.
\newblock \emph{ACM Transactions on Graphics}, 43\penalty0 (4):\penalty0 1--22,
  2024.

\bibitem[Smale(1959)]{smale1959}
Stephen Smale.
\newblock The classification of immersions of spheres in {E}uclidean spaces.
\newblock \emph{Annals of Mathematics}, 69\penalty0 (2):\penalty0 327--344,
  March 1959.

\bibitem[Spring(2005)]{spring2005}
David Spring.
\newblock The golden age of immersion theory in topology: 1959--1973. {A}
  mathematical survey from a historical perspective.
\newblock \emph{Bulletin of the American Mathematical Society}, 42\penalty0
  (2):\penalty0 163--180, 2005.

\bibitem[Stiefel(1935)]{Stiefel1935}
Eduard Stiefel.
\newblock Richtungsfelder und {F}ernparallelismus in $n$-dimensionalen
  {M}annigfaltigkeiten.
\newblock \emph{Commentarii Mathematici Helvetici}, 8:\penalty0 305--353, 1935.
\newblock Also issued as Dissertation ETH Z{\"u}rich, Nr. 844.

\bibitem[Sullivan(2023)]{Sullivan2023}
Dennis Sullivan.
\newblock Any oriented non-closed connected $4$-manifold can be spread
  holomorphically over the complex projective plane minus a point.
\newblock \emph{Pure Appl. Math. Q.}, 19\penalty0 (6):\penalty0 2915--2918,
  2023.

\bibitem[Thom(1969)]{Thom1969}
Ren{\'e} Thom.
\newblock Ensembles et morphismes stratifi{\'e}s.
\newblock \emph{Bulletin of the American Mathematical Society}, 75\penalty0
  (2):\penalty0 240--284, 1969.

\bibitem[Thompson(1994)]{Thompson1994}
Abigail Thompson.
\newblock Thin position and the recognition problem for {${S}^3$}.
\newblock \emph{Mathematical Research Letters}, 1\penalty0 (5):\penalty0
  613--630, 1994.

\bibitem[Titus(1960)]{Titus1960}
Charles~J. Titus.
\newblock A theory of normal curves and some applications.
\newblock \emph{Pacific Journal of Mathematics}, 10\penalty0 (3):\penalty0
  1083--1096, 1960.

\bibitem[Titus(1961)]{Titus1961}
Charles~J. Titus.
\newblock The combinatorial topology of analytic functions on the boundary of a
  disk.
\newblock \emph{Acta Mathematica}, 106:\penalty0 45--64, 1961.

\bibitem[Whitehead(1961)]{Whitehead1961ImmersionOpen3Manifold}
J.~H.~C. Whitehead.
\newblock The immersion of an open 3-manifold in {E}uclidean 3-space.
\newblock \emph{Proceedings of the London Mathematical Society}, 11\penalty0
  (1):\penalty0 81--90, 1961.

\bibitem[Whitney(1965)]{Whitney1965Tangents}
Hassler Whitney.
\newblock Tangents to an analytic variety.
\newblock \emph{Annals of Mathematics}, 81\penalty0 (3):\penalty0 496--549,
  1965.

\bibitem[Zhao(2019)]{zhao2019}
Bojun Zhao.
\newblock Extensions of immersions of surfaces into $\mathbb{R}^{3}$, 2019.
\newblock Preprint; arXiv:1906.04294.

\end{thebibliography}
\end{document}